\theoremstyle{plain}
\newtheorem{theorem}{Theorem}
\newtheorem{lemma}[theorem]{Lemma}
\newtheorem{remark}[theorem]{Remark}
\newtheorem{definition}[theorem]{Definition}
\newtheorem{corollary}[theorem]{Corollary}
\newtheorem*{theoremA1}{Theorem A1}
\newtheorem*{theoremA2}{Theorem A2}
\newtheorem*{theoremG1}{Theorem G1}
\newtheorem*{theoremG2}{Theorem G2}
\begin{document}


\bigskip

\noindent{\Large
The algebraic and geometric classification of \\
right alternative and semi-alternative  algebras}
 \footnote{
The first part of the  work is supported by 
RSF  19-71-30002;
the second part of the work is supported by FCT   2023.08031.CEECIND, UIDB/00212/2020 and UIDP/00212/2020.}

 \bigskip

\begin{center}

 {\bf
Hani Abdelwahab\footnote{Department of Mathematics, 
 Mansoura University,  Mansoura, Egypt; \ haniamar1985@gmail.com}, 
   Ivan Kaygorodov\footnote{CMA-UBI, University of  Beira Interior, Covilh\~{a}, Portugal; \    kaygorodov.ivan@gmail.com}   \&
   Roman Lubkov\footnote{Department of Mathematics and Computer Science, Saint Petersburg University, Russia; r.lubkov@spbu.ru, romanlubkov@yandex.ru}  
}

\end{center}

\noindent {\bf Abstract:}
{\it  
The algebraic and geometric classifications of complex 
$3$-dimensional right alternative and semi-alternative  algebras are given.
As corollaries, we have the algebraic and geometric classification of 
complex $3$-dimensional $\mathfrak{perm}$, binary $\mathfrak{perm}$, associative, $(-1,1)$-, 
binary $(-1,1)$-, and assosymmetric algebras.
In particular, 
we proved that 
the first example of non-associative right alternative   algebras appears in dimension $3;$
the first example of non-associative assosymmetric  algebras appears in dimension $3;$
the first example of non-assosymmetric semi-alternative algebras appears in dimension $4;$
the first example of binary $(-1,1)$-algebras, which is non-$(-1,1)$-, appears in dimension $4;$
the first example of right alternative algebras, which is not binary $(-1,1)$-, appears in dimension $4;$
 the first example of binary $\mathfrak{perm}$ non-$\mathfrak{perm}$ algebras appears in dimension $4.$  
As a byproduct, we give a more easy answer to problem 2.109 from the Dniester Notebook, 
previously resolved by Shestakov and Arenas.}

 \bigskip 

\noindent {\bf Keywords}:
{\it 
right alternative algebras, 
semi-alternative  algebras,
algebraic classification,
geometric classification.}

\bigskip

\noindent {\bf MSC2020}:  
17A30 (primary);
17D15,
14L30 (secondary).

	 \medskip

 
\tableofcontents

\newpage

\section*{Introduction}

The algebraic classification (up to isomorphism) of algebras of dimension $n$ of a certain variety
defined by a family of polynomial identities is a classic problem in the theory of non-associative algebras, see \cite{ikp20, akks, afm, G62, ikm,ikm21}.
There are many results related to the algebraic classification of small-dimensional algebras in different varieties of
associative and non-associative algebras.
For example, algebraic classifications of 
$2$-dimensional algebras,
$3$-dimensional Poisson algebras \cite{afm},
$4$-dimensional alternative  algebras \cite{G62},
$4$-dimensional nilpotent Poisson  algebras,
$4$-dimensional nilpotent right alternative  algebras \cite{ikm},
$5$-dimensional symmetric Leibniz algebras  and so on
 have been given.
 Deformations and geometric properties of a variety of algebras defined by a family of polynomial identities have been an object of study since the 1970's, see \cite{ben,BC99,GRH, GRH3,ikm,mr}  and references in \cite{k23,l24,MS}. 
 Burde and Steinhoff constructed the graphs of degenerations for the varieties of    $3$-dimensional and $4$-dimensional Lie algebras~\cite{BC99}. 
 Grunewald and O'Halloran studied the degenerations for the variety of $5$-dimensional nilpotent Lie algebras~\cite{GRH}.

One of the most important generalizations of associative algebras is the class of alternative algebras.
They are defined by the following identities 
\begin{equation}    \label{ident}
    \big(x,y,z\big) \ = \ - \ \big(x,z,y\big)\mbox{ \ and \ }\big(x,y,z\big) \ = \ \big(y,z,x\big),
\end{equation}
where $\big(x,y,z\big)=(xy)z-x(yz).$
The main example of (non-associative) alternative algebras is the octonion algebra discovered in  the XIX century.
The variety of alternative algebras is under certain consideration now (see, for example, \cite{C24,C25,G24,AA,hnt} and reference therein).
The variety of algebras defined as the first identity from \eqref{ident} is called right alternative algebras and the variety of algebras defined by the second identity from \eqref{ident} is called semi-alternative\footnote{While this particular identity has been studied by various authors and dates back to a 1933 article by Max Zorn \cite{Z}, the term "semialternative'' seems first to have been used by   Schafer in 1992 \cite{Schafer92}. It is also called
weakly alternative algebras in 1993 \cite{V93} and assocyclic algebras in 2011 \cite{AS}.} algebras\footnote{On the other hand, the variety of semi-alternative algebras is a generalization of another variety of algebras, called assosymmetric algebras (i.e., algebras with the following identities $(x,y,z)=(y,x,z)=(x,z,y)$), introduced by Kleinfeld in 1957 \cite{K57} and studied in a series of papers by Dzhumadildaev (see, for example, \cite{DZA} and references in \cite{ikm21}). 
}.
Obviously, each of these varieties is a generalization of alternative (and more general, associative) algebras.

Albert proved that every semisimple right alternative algebra of characteristic not two is alternative \cite{alb2} and 
after that, 
Thedy showed that a finite dimensional right alternative algebra without a nil ideal is alternative \cite{thedy77}.
But, unfortunately, it was proven that the variety of right alternative algebras does not admit the  Wedderburn principal theorem \cite{thedy78}.
In the infinite case we have a different result:
Mikheev constructed an example of an infinite-dimensional simple right alternative algebra that is not alternative over any field \cite{mikh} and 
it  was finally established that a simple right alternative algebra must be either alternative or nil \cite{skos1}.
The study of simple right alternative superalgebras has a very big progress in recent papers of Pchelintsev and Shashkov (see,
for example, see their survey \cite{seroleg} and the reference therein).
Right alternative algebras with some additional identities were studied in papers of Isaev, Pchelintsev and others \cite{isaev}.
For example, Isaev gave the negative answer for the Specht problem in the variety of right alternative algebras \cite{isaev}.
Some combinatorial properties of the variety of right alternative algebras were studied by Umirbaev in \cite{ualbay85}.
Right alternative unital bimodules over the matrix algebras of order $\geq 3$ and over Cayley algebra were studied by 
Murakami,  Pchelintsev,  Shashkov and Shestakov in \cite{MPS,PSS}.  
On the other side, there is an  interest in the study of 
nilpotent, right nilpotent, and solvable right alternative algebras (see, for example, \cite{skos3,ser76,ser13}).
So, Pchelintsev proved an analog of the well-known Zhevlakov theorem: any right alternative Malcev-admissible  nil algebra of bounded index over a field of characteristic zero is solvable \cite{sergey};
Kaygorodov and Popov proved an analog of Moens theorem:
a finite-dimensional right alternative algebra over a field of characteristic zero  admitting an invertible Leibniz-derivation is right nilpotent \cite{kp16}.

The systematic study of semi-alternative algebras from the algebraic point of view started after a paper by Outcalt, where he proved that 
each simple or primitive semi-alternative ring must be  alternative \cite{065}.
Sterling proved that if a semi-alternative ring contains no non-zero ideal whose square is zero,  then it is alternative \cite{S68}.
Some radical properties of semi-alternative rings were discussed by Pokrass \cite{P78}.
Kleinfeld proved that  rings which satisfy 
\begin{center}$\big(x,x,y\big)\ =\ \big(x,y,x\big)\ =\ \big(y,x,x\big)$ \end{center}
must be semi-alternative \cite{K88}.
Kleinfeld and   Widmer \cite{KW} establish that semi-alternative rings  also satisfy the identities
\begin{center}
    $2\big(x,x,x\big)^2\ =\ 0$ and $\big(\big(y,x,x\big),x,x\big)\ =\ 0$.
\end{center}
Schafer proved that each solvable semi-alternative algebra is nilpotent and gave 
an example showing that the Wedderburn principal
theorem  does not hold for semi-alternative algebras   \cite{S68}.
Vakhitov proved that each semi-alternative nil-ring is solvable \cite{V93} and after that
he described semi-alternative modules over composition algebras in \cite{V94}.
One-generator free semialternative rings over $\mathbb Z$ were studied by Bremner \cite{B99}.
The commutator algebra of each semi-alternative algebra gives a binary Lie algebra, 
but as it was proved in a paper by Arenas and Shestakov, there are exceptional binary Lie algebras \cite{AS}.
 
Gainov proved that each $3$-dimensional alternative algebra is associative \cite{G62}. 
The aim of our present work is to classify all $3$-dimensional   right alternative and semi-alternative algebras. 
Based on the obtained results, we will construct the geometric classification of the mentioned varieties of algebras. 
Namely, the algebraic  classification  of complex 
$3$-dimensional right alternative algebras is given in Theorem A1 and semi-alternative  algebras is given in Theorem A2.
As corollaries, we have the algebraic classification of 
complex $3$-dimensional $\mathfrak{perm}$ (Corollary \ref{perm})), \ 
binary $\mathfrak{perm}$ (Corollary \ref{bperm}),  \ 
associative (Theorem \ref{3-dim assoc}), \ 
$(-1,1)$- (Corollary \ref{(-1,1)}), \ 
binary $(-1,1)$- (Corollary \ref{3dimbin-11}), \ 
and assosymmetric (Corollary \ref{assos}) algebras.
In particular, we proved that the first example of non-associative right alternative   algebras appears in dimension $3$; the first example of non-associative assosymmetric  algebras appears in dimension $3;$
the first example of non-assosymmetric semi-alternative algebras appears in dimension $4$ (Theorem \ref{4-dim non-asso});
the first example of binary $(-1,1)$-algebras, which is non-$(-1,1)$-, appears in dimension $4$ (Theorem \ref{non (-1,1)});
the first example of right alternative algebras, which is not binary $(-1,1)$-, appears in dimension $4$ (Corollary \ref{21});
 the first example of binary $\mathfrak{perm}$ non-$\mathfrak{perm}$ algebras appears in dimension $4$
 (Corollary \ref{21}).
As a byproduct, we give an easier answer to problem 2.109 from the Dniester Notebook, 
previously resolved by Shestakov and Arenas (Lemma \ref{commutator}). 
The geometric classifications of complex 
$3$-dimensional 
$\mathfrak{perm}$ (Theorem \ref{thm:geo_perm}), \ 
associative (Theorem \ref{thm:geo_assoc}),\ 
right alternative (Theorem G1), and 
semi-alternative (Theorem G2)  algebras are also  given.

\newpage
\section{The algebraic classification of  algebras}

\subsection{Preliminaries: the algebraic   classification}
All the algebras below will be over $\mathbb C$ and all the linear maps will be $\mathbb C$-linear.
For simplicity, every time we write the multiplication table of an algebra 
the products of basic elements whose values are zero or can be recovered from the commutativity  or from the anticommutativity are omitted.
The notion of a nontrivial algebra means that the  multiplication is nonzero.
In this section, we introduce the techniques used to obtain our main results (the techniques are similar to those considered  in \cite{afm}).

Let $\big({\rm A}, \cdot\big)$ be an algebra. We consider the following two new products
on the underlying vector space ${\rm A}$ defined by 
\begin{center}
$x\circ y :=\frac{1}{2}\big(x\cdot y+y\cdot x\big), \hspace{2cm} [x,y] :=\frac{1}{2}%
\big(x\cdot y-y\cdot x\big).$
\end{center}
Let us denote ${\rm A}^+:= \big({\rm A}, \circ\big),$ \ 
${\rm A}^-:=\big({\rm A},
[\cdot,\cdot]\big)$ 
and the associator as $\big(x,y,z\big)_\cdot:=\big(x\cdot y\big)\cdot z-x\cdot \big(y\cdot z\big).$

Recall that an algebra $\big({\rm A}, \cdot\big)$ is called a  right alternative (resp., semi-alternative) algebra if it satisfies  the identity: 
\begin{center}
  $\big(x,y,z\big) \ = \ - \ \big(x,z,y\big)$ \ $\big($   resp., \  $\big(x,y,z\big) \ = \ \big(y,z,x\big)$  $\big).$ 
\end{center}

\begin{definition}
Let $\big({\rm A},\circ\big)$ be a Jordan algebra. 
Let ${\rm Z}^2\big({\rm A},%
{\rm A}\big)$ be the set of all skew-symmetric  bilinear maps $\theta :%
{\rm A}\times {\rm A} \to {\rm A}$ such that 
\begin{longtable}{lc}
$\big(x, y,  z\big)_{\circ}+ \theta\big(x,y\big) \circ z + \theta\big(x \circ y, z\big) -  
  x \circ \theta \big(y , z\big) - \theta\big(x, y \circ  z\big) + \big(x,y,z\big)_{\theta}$ & $=$\\
\multicolumn{2}{r}{ $-\big(\big(x, z,  y\big)_{\circ}+ \theta\big(x,z\big) \circ y + \theta\big(x \circ z, y\big) -  
  x \circ \theta \big(z , y\big) - \theta\big(x, z \circ  y\big) + \big(x,z,y\big)_{\theta} \big).$}

\end{longtable}  
\end{definition}

\begin{definition}\label{Z2}
Let $\big({\rm A},[\cdot,\cdot]\big)$ be a Malcev algebra. 
Let ${\rm Z}^2\big({\rm A},%
{\rm A}\big)$ be the set of all symmetric  bilinear maps $\theta :%
{\rm A}\times {\rm A} \to {\rm A}$ such that 
\begin{longtable}{lc}
$\big(x, y,  z\big)_{[\cdot,\cdot]}+ [\theta\big(x,y\big) , z] + \theta\big([x,  y], z\big) -  
  [x , \theta \big(y , z\big)] - \theta\big(x, [y ,  z]\big) + \big(x,y,z\big)_{\theta}$ & $=$\\
\multicolumn{2}{r}{ $\big(y, z,  x\big)_{[\cdot,\cdot]}+ [\theta\big(y,z\big) , x] + \theta\big([y, z], x\big) -  
  [y , \theta \big(z , x\big)] - \theta\big(y, [z ,  x]\big) + \big(y,z,x\big)_{\theta} .$}
\end{longtable}  
\end{definition}

\noindent  For $\theta \in {\rm Z}^2\big(%
{\rm A},{\rm A}\big)$ we define on ${\rm A}$ a product $%
*_{\theta} :{\rm A}\times {\rm A}\to {\rm A}$ by 
$
x *_{\theta} y := \theta(x,y).  $

\begin{lemma}
Let $({\rm A},\cdot)$ be a Jordan  (resp., Malcev) algebra and $\theta \in {\rm Z}^2({\rm A},{\rm A})$. Then  $( {\rm A},\cdot_{\theta})$ is a
right alternative (resp., semi-alternative)  algebra, 
where $x\cdot_{\theta} y := x \cdot y + x *_{\theta} y.$ 

\end{lemma}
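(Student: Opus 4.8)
The plan is to verify the required identity by a direct expansion of the associator of the deformed product $\cdot_{\theta}$, matching it term by term with the defining relation of ${\rm Z}^2\big({\rm A},{\rm A}\big)$. I will run the two cases in parallel: write $\bullet$ for the base product, so that $x\bullet y = x\circ y$ in the Jordan case and $x\bullet y=[x,y]$ in the Malcev case, and recall that by construction $x\cdot_{\theta} y = x\bullet y + \theta\big(x,y\big)$.

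First I would compute $\big(x,y,z\big)_{\cdot_{\theta}} = \big(x\cdot_{\theta} y\big)\cdot_{\theta} z - x\cdot_{\theta}\big(y\cdot_{\theta} z\big)$ using bilinearity. Substituting $x\cdot_{\theta} y = x\bullet y + \theta\big(x,y\big)$ and multiplying out, the eight resulting summands split into three groups: the pure base associator $\big(x,y,z\big)_{\bullet}$; the four mixed terms $\theta\big(x,y\big)\bullet z + \theta\big(x\bullet y, z\big) - x\bullet\theta\big(y,z\big) - \theta\big(x, y\bullet z\big)$, each carrying exactly one factor $\theta$ and one factor $\bullet$; and the pure $\theta$-associator $\theta\big(\theta(x,y),z\big) - \theta\big(x,\theta(y,z)\big)=\big(x,y,z\big)_{\theta}$. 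Collecting these shows that $\big(x,y,z\big)_{\cdot_{\theta}}$ is exactly the left-hand side of the corresponding definition of ${\rm Z}^2\big({\rm A},{\rm A}\big)$.

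The last step is to identify the permuted expression appearing on the right-hand side of that definition. Swapping $y$ and $z$ throughout the same expansion reproduces $\big(x,z,y\big)_{\cdot_{\theta}}$, which is precisely the bracketed term on the right-hand side in the Jordan case; cyclically permuting $x\to y\to z\to x$ reproduces $\big(y,z,x\big)_{\cdot_{\theta}}$, which is precisely the right-hand side of Definition~\ref{Z2}. Hence the membership condition $\theta\in{\rm Z}^2\big({\rm A},{\rm A}\big)$ reads $\big(x,y,z\big)_{\cdot_{\theta}} = -\big(x,z,y\big)_{\cdot_{\theta}}$ in the Jordan case and $\big(x,y,z\big)_{\cdot_{\theta}} = \big(y,z,x\big)_{\cdot_{\theta}}$ in the Malcev case, which are exactly the right alternative and semi-alternative identities.

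I do not anticipate a genuine obstacle. Notably, neither the Jordan (resp.\ Malcev) identity for $\bullet$ nor the skew-symmetry (resp.\ symmetry) of $\theta$ is used in verifying the associator identity itself; these hypotheses are needed only to guarantee that $\bullet$ and $\theta$ recover the two (commutative and anticommutative) components of $\cdot_{\theta}$ — in the Jordan case $\bullet$ is the symmetric part and $\theta$ the antisymmetric part, while in the Malcev case the roles are reversed — which is what ties the construction to the decomposition used in the classification. The only care required is the bookkeeping of the eight-term expansion and the observation that permuting the arguments of $\big(x,y,z\big)_{\cdot_{\theta}}$ reproduces the two sides of the defining relation verbatim.
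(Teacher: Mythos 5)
Your proof is correct and is exactly the intended argument: the paper states this lemma without proof precisely because the defining condition for ${\rm Z}^2\big({\rm A},{\rm A}\big)$ is, term by term, the eight-term expansion of $\big(x,y,z\big)_{\cdot_\theta}$ (resp.\ of $-\big(x,z,y\big)_{\cdot_\theta}$ and $\big(y,z,x\big)_{\cdot_\theta}$) that you wrote out, so membership in ${\rm Z}^2$ literally \emph{is} the right alternative (resp.\ semi-alternative) identity for the deformed product. Your closing observation is also accurate: the Jordan/Malcev identity and the (skew-)symmetry of $\theta$ play no role in this verification and serve only to ensure that $\cdot_\theta$ has the prescribed symmetric and antisymmetric parts, which is what the classification procedure exploits.
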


Now, let $\big({\rm A},\cdot\big)$ be an algebra and ${\rm Aut}\big(%
{\rm A}\big)$ be the automorphism group of ${\rm A}$ with respect to
product $\cdot$. Then ${\rm Aut}\big({\rm A}\big)$ acts on ${\rm Z}^2\big({\rm A},%
{\rm A}\big)$ by 
\begin{equation*}
\big(\theta \ast \varphi\big)(x,y) := \varphi^{-1}\big(\theta \big(\varphi(x),\varphi(y) \big)
\big),
\end{equation*}
where $\varphi \in {\rm Aut\big({\rm A}\big)}$ and $\theta \in {\rm Z}^2\big({\rm A}%
, {\rm A}\big)$.

\begin{lemma}
Let $\big({\rm A},\cdot\big)$ be a Jordan (resp., Malcev) algebra and $\theta, \vartheta \in {\rm Z}^2\big(%
{\rm A},{\rm A}\big)$. Then the   algebras $\big({\rm A}, \cdot_{\theta}\big)$ and $\big({\rm A}, \cdot_{\vartheta}\big)$ are isomorphic if and only if there exists $\varphi \in {\rm Aut\big({\rm A}\big)}$ satisfying $\theta \ast \varphi
=\vartheta $.
\end{lemma}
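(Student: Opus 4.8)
The plan is to handle the two directions separately, the forward implication by a direct computation and the converse by decoupling the symmetric and skew-symmetric parts of the deformed product. The observation on which everything rests is the following. In the Jordan case $\cdot=\circ$ is symmetric while $\theta$ is skew-symmetric, so for the deformed product $x\cdot_\theta y=x\circ y+\theta(x,y)$ the symmetrization $\frac{1}{2}\big(x\cdot_\theta y+y\cdot_\theta x\big)$ collapses to $x\circ y$ (the $\theta$-terms cancel), while the skew part $\frac{1}{2}\big(x\cdot_\theta y-y\cdot_\theta x\big)$ equals $\theta(x,y)$; dually, in the Malcev case $\cdot=[\cdot,\cdot]$ is skew and $\theta$ is symmetric, so the minus-algebra of $\cdot_\theta$ recovers $[\cdot,\cdot]$ and the plus-algebra recovers $\theta$. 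Thus the underlying Jordan (resp. Malcev) product is intrinsic to $\cdot_\theta$, independent of $\theta$, and is exactly one of the two parts of $\cdot_\theta$. I would record this first, noting that only the (skew-)symmetry of $\theta$ is used here, not the cocycle condition of Definition \ref{Z2}.

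For the \emph{if} direction, suppose $\varphi\in{\rm Aut}\big({\rm A}\big)$ satisfies $\theta\ast\varphi=\vartheta$, which unwinds to $\varphi\big(\vartheta(x,y)\big)=\theta\big(\varphi(x),\varphi(y)\big)$. Using that $\varphi$ preserves $\cdot$, a single line gives
\[
\varphi\big(x\cdot_\vartheta y\big)=\varphi\big(x\cdot y\big)+\varphi\big(\vartheta(x,y)\big)=\varphi(x)\cdot\varphi(y)+\theta\big(\varphi(x),\varphi(y)\big)=\varphi(x)\cdot_\theta\varphi(y),
\]
so $\varphi$ is an isomorphism $\big({\rm A},\cdot_\vartheta\big)\to\big({\rm A},\cdot_\theta\big)$.

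For the converse, the key step—and the only point requiring genuine care—is to upgrade an abstract isomorphism of the deformed algebras to an automorphism of ${\rm A}$. Let $\varphi:\big({\rm A},\cdot_\vartheta\big)\to\big({\rm A},\cdot_\theta\big)$ be an isomorphism. Since any algebra homomorphism commutes with forming the symmetric and the skew-symmetric parts of a product, $\varphi$ is simultaneously an isomorphism of the associated plus-algebras and of the associated minus-algebras. By the opening observation, in the Jordan case both plus-algebras coincide with $\big({\rm A},\circ\big)=\big({\rm A},\cdot\big)$, whence $\varphi\in{\rm Aut}\big({\rm A}\big)$; the corresponding minus-parts then read $\varphi\big(\vartheta(x,y)\big)=\theta\big(\varphi(x),\varphi(y)\big)$, that is $\vartheta=\theta\ast\varphi$. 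The Malcev case is identical with the roles of the two parts exchanged: the minus-algebras yield $\varphi\in{\rm Aut}\big({\rm A}\big)$ and the plus-parts yield $\vartheta=\theta\ast\varphi$.

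The principal obstacle is thus conceptual rather than computational: one must recognize that the deformation leaves untouched the component of $\cdot_\theta$ carrying the original algebra structure, so that every isomorphism between $\cdot_\theta$ and $\cdot_\vartheta$ is forced to respect $\cdot$ and hence lies in ${\rm Aut}\big({\rm A}\big)$. Once this is in place, the equivalence drops out by simply comparing the remaining (skew-)symmetric parts, and I would also remark that $\ast$ is a right action, so choosing the isomorphism in the direction $\cdot_\vartheta\to\cdot_\theta$ delivers $\theta\ast\varphi=\vartheta$ on the nose without any bookkeeping with inverses.
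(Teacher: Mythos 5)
Your proof is correct. The paper states this lemma without giving a proof (its preliminaries only note that the techniques follow \cite{afm}), and your argument — recovering the Jordan (resp.\ Malcev) product as the symmetric (resp.\ skew-symmetric) part of $\cdot_\theta$ and $\theta$ itself as the complementary part, so that any isomorphism of deformed algebras is forced to be an automorphism of $\big({\rm A},\cdot\big)$ intertwining the cocycles — is exactly the standard decomposition argument on which the paper's entire classification procedure implicitly rests.
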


Hence, we have a procedure to classify the right alternative (resp., semi-alternative) algebras associated with a given Jordan (resp., Malcev) algebra $\big({\rm A},\cdot\big)$. It consists of three steps:

\begin{enumerate}
\item[{\bf Step} $1.$] Compute ${\rm Z}^2\big({\rm A},{\rm A}\big)$.

\item[{\bf Step} $2.$] Find the orbits of ${\rm Aut\big({\rm A}\big)}$ on ${\rm Z}^2\big(%
{\rm A},{\rm A}\big)$.

\item[{\bf Step} $3.$] Choose a representative $\theta$ from each orbit and then
construct the right alternative (resp., semi-alternative)   algebra $\big({\rm A}, \cdot_{\theta}\big)$.
\end{enumerate}

Let us introduce the following notations. Let $\{e_1,\dots,e_n\}$ be a fixed basis of an algebra $\big({\rm A},\cdot\big)$. Define 
$\mathrm{\Lambda }^2({\rm A},\mathbb{C})$ to be the space of all 
skew-symmetric  (resp., symmetric) bilinear forms on ${\rm A}$, that is, \begin{center}
$\mathrm{\Lambda}^2(%
{\rm A},\mathbb{C}) := 
\langle \Delta_{ij} | 1\leq i<j \leq n\rangle$
\ \big( \ resp., $\mathrm{\Lambda}^2(%
{\rm A},\mathbb{C}) := 
\langle \Delta_{ij} | 1\leq i\leq j \leq n\rangle$ \ \big),
\end{center} where $\Delta_{ij}$ is the skew-symmetric (resp., symmetric) bilinear form $%
\Delta_{ij}:{\rm A}\times {\rm A} \to \mathbb{C}$ defined by 
\begin{equation*}
\Delta_{ij}( e_l,e_m) :=\left\{ 
\begin{tabular}{rl}
$1,$ & if $(i,j) = (l,m),$ \\ 
$(-1)^\varepsilon,$ & if $(i,j) = (m,l)$ and $\varepsilon=1$  (resp., $\varepsilon=0$), \\ 
$0,$ & otherwise.%
\end{tabular}
\right.
\end{equation*}
Now, if $\theta \in {\rm Z}^2\big({\rm A},{\rm A}\big)$ then $\theta$ can be
uniquely written as $\theta (x,y) = \sum_{i=1}^n B_i(x,y)e_i$, where $%
B_1,\dots , B_n$ are skew-symmetric  (resp., symmetric) bilinear forms on ${\rm A}$. Also,
we may write $\theta = \big(B_{1},\dots ,B_n \big)$. Let $\varphi^{-1}\in {\rm Aut}(%
{\rm A})$ be given by the matrix $( b_{ij})$. If $(\theta \ast
\varphi)(x,y) = \sum_{i=1}^n B_i^{\prime }(x,y)e_i$, then $B_i^{\prime }=
\sum_{j=1}^n b_{ij}\varphi^t B_j\varphi$, whenever $i \in \{1,\dots,n\}$.

\begin{theorem}
\label{3-dim commassoc}Let ${\rm A}$ be a complex $3$-dimensional
associative commutative algebra. Then ${\rm A}$ is isomorphic to one of
the following algebras:

\begin{longtable}{llllll}
${\rm A}_{01}$ & $:$ & $e_{1}e_{1} = e_{1}$ & $e_{2}e_{2} = e_{2}$ & \\ 
${\rm A}_{02}$ & $:$ & $e_{1}e_{1} = e_{1}$ & $e_{1}e_{2} = e_{2}$ & \\ 
${\rm A}_{03}$ & $:$ & $e_{1}e_{1} = e_{1}$ & & \\ 
${\rm A}_{04}$ & $:$ & $e_{1}e_{1} = e_{2}$ & & \\ 
${\rm A}_{05}$ & $:$ & $e_{1}e_{2} = e_{3}$ & & \\ 
${\rm A}_{06}$ & $:$ & $e_{1}e_{1} = e_{2}$ & $e_{1}e_{2} = e_{3}$ & \\ 
${\rm A}_{07}$ & $:$ & $e_{1}e_{1} = e_{1}$ & $e_{2}e_{2} = e_{2}$ & $e_{3}e_{3} = e_{3}$ \\ 
${\rm A}_{08}$ & $:$ & $e_{1}e_{1} = e_{1}$ & $e_{2}e_{2} = e_{2}$ & $e_{2}e_{3} = e_{3}$ \\ 
${\rm A}_{09}$ & $:$ & $e_{1}e_{1} = e_{1}$ & $e_{1}e_{2} = e_{2}$ & $e_{1}e_{3} = e_{3}$ \\ 
${\rm A}_{10}$ & $:$ & $e_{1}e_{1} = e_{1}$ & $e_{1}e_{2} = e_{2}$ & $e_{1}e_{3} = e_{3}$ & $e_{2}e_{2} = e_{3}$ \\ 
${\rm A}_{11}$ & $:$ & $e_{1}e_{1} = e_{1}$ & $e_{2}e_{2} = e_{3}$ & \\ 
\end{longtable}
 
\end{theorem}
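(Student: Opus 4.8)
The plan is to exploit the classical structure theory of finite-dimensional associative algebras. Since ${\rm A}$ is associative, it admits a Jacobson radical ${\rm N}$ (which, for a commutative associative algebra, coincides with the nilradical), and by the Wedderburn principal theorem one can write ${\rm A}={\rm S}\oplus {\rm N}$ as a vector space, where ${\rm S}$ is a subalgebra isomorphic to the semisimple quotient ${\rm A}/{\rm N}$. Because the ground field is $\mathbb C$ and ${\rm S}$ is commutative semisimple, ${\rm S}\cong \mathbb C^{k}$ is a direct product of $k=\dim {\rm S}$ copies of $\mathbb C$, spanned by pairwise orthogonal idempotents $f_1,\dots,f_k$ with $f_i f_j=\delta_{ij} f_i$. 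The classification then splits into the four cases $k\in\{0,1,2,3\}$, with $\dim {\rm N}=3-k$ in each.

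First I would dispose of the two extreme cases. When $k=3$ the radical vanishes and ${\rm A}\cong\mathbb C^3$, which is ${\rm A}_{07}$. When $k=0$ the algebra is itself $3$-dimensional nilpotent, and I would classify these directly by the dimension of ${\rm N}^2$: if $\dim {\rm N}^2=1$ and ${\rm N}^3=0$, the multiplication is a nonzero symmetric bilinear form valued in ${\rm N}^2$ whose kernel contains ${\rm N}^2$ (this last point being exactly what associativity forces), so over $\mathbb C$ it reduces, by its rank, to ${\rm A}_{04}$ (rank $1$) or ${\rm A}_{05}$ (rank $2$); while $\dim {\rm N}^2=2$ forces ${\rm N}$ to be singly generated, giving the truncated polynomial algebra ${\rm A}_{06}$.

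The substance of the argument lies in the intermediate cases $k=1$ and $k=2$, where one must describe how the idempotents act on the radical. The key observation is that, by commutativity and associativity, left multiplication $L_{f_i}\colon {\rm N}\to {\rm N}$ satisfies $L_{f_i}^2=L_{f_i}$ and $L_{f_i}(xy)=L_{f_i}(x)\,y$; hence each $L_{f_i}$ is an idempotent endomorphism of the algebra ${\rm N}$ with eigenvalues in $\{0,1\}$, and orthogonality of the $f_i$ forces $L_{f_i}L_{f_j}=0$ for $i\neq j$. For $k=2$ the radical is one-dimensional with zero square, and these constraints leave only the actions giving ${\rm A}_{01}$ and ${\rm A}_{08}$ (the two nonzero single-idempotent actions being identified by interchanging $f_1$ and $f_2$). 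For $k=1$ I would first note that the two-dimensional nilpotent radical is either a zero-product algebra or the truncated polynomial algebra $\langle a,a^2\rangle$ with $a^3=0$; combining each possibility with the admissible idempotent actions, and solving the resulting idempotency relation on the coefficients of $L_{f_1}$, yields ${\rm A}_{02},{\rm A}_{03},{\rm A}_{09}$ in the first case and ${\rm A}_{10},{\rm A}_{11}$ in the second.

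Finally I would verify that the eleven algebras are pairwise non-isomorphic, which is routine once one records the separating invariants: the dimension of the semisimple part $\dim({\rm A}/{\rm N})$, the dimension of ${\rm N}^2$, and whether ${\rm A}$ possesses a unit. The main obstacle I anticipate is not any single computation but the bookkeeping in the $k=1$ case, where the interaction between the choice of radical and the admissible idempotent actions produces several branches; the one genuinely delicate point is establishing that $L_{f_i}$ is an algebra endomorphism and that its only admissible coefficients are those forcing the eigenvalue to be $0$ or $1$ with no off-diagonal contribution, which is what collapses the a priori continuous family of actions down to finitely many canonical forms.
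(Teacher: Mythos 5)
The paper never proves this theorem: it is stated as a known preliminary classification (imported from the literature on low-dimensional commutative associative algebras) and is used only as the starting point for the extension procedure that builds the right alternative algebras. So there is no proof in the paper to compare against; your proposal stands on its own. On its merits, your structure-theoretic route is sound and complete as far as the existence part goes: the Wedderburn--Mal'cev splitting ${\rm A}={\rm S}\oplus{\rm N}$ with ${\rm S}\cong\mathbb C^k$, the observation that each $L_{f_i}$ is an idempotent multiplier of ${\rm N}$ (hence diagonalizable with eigenvalues $0,1$, and with $L_{f_i}L_{f_j}=0$ for $i\neq j$), and the eigenspace decomposition ${\rm N}={\rm N}_0\oplus{\rm N}_1$ with ${\rm N}_0{\rm N}_1=0$ correctly produce ${\rm A}_{07}$ for $k=3$; ${\rm A}_{01},{\rm A}_{08}$ for $k=2$; ${\rm A}_{02},{\rm A}_{03},{\rm A}_{09}$ (radical square zero) and ${\rm A}_{10},{\rm A}_{11}$ (radical $\langle a,a^2\rangle$) for $k=1$, where your rank-$1$ exclusion in the latter case is exactly right since both eigenspaces would be one-dimensional nilpotent subalgebras, forcing ${\rm N}^2=0$; and ${\rm A}_{04},{\rm A}_{05},{\rm A}_{06}$ for $k=0$ (the zero-product algebra being excluded by the paper's nontriviality convention).

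The one concrete flaw is in your final verification step: the three invariants you name, $\dim({\rm A}/{\rm N})$, $\dim{\rm N}^2$, and unitality, do \emph{not} separate all eleven algebras. Both ${\rm A}_{02}$ and ${\rm A}_{03}$ have semisimple part of dimension $1$, ${\rm N}^2=0$, and no unit; they are distinguished instead by $\dim{\rm A}^2$ ($2$ versus $1$). Likewise ${\rm A}_{04}$ and ${\rm A}_{05}$ are both nilpotent with $\dim{\rm N}^2=1$ and no unit, and even share $\dim{\rm A}^2=1$; here one needs, e.g., the annihilator $\{x\in{\rm A}\mid x{\rm A}=0\}$, which has dimension $2$ for ${\rm A}_{04}$ and $1$ for ${\rm A}_{05}$ (equivalently, the rank of the symmetric form giving the multiplication). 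This is a routine repair, but as written the non-isomorphism claim does not follow from the invariants you recorded.
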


\begin{theorem}
\label{3-dim Jord}Let ${\rm A}$ be a complex $3$-dimensional Jordan
algebra. Then ${\rm A}$ is an associative commutative algebra listed in
Theorem \ref{3-dim commassoc} or isomorphic to one of the following algebras:

\begin{longtable}{lllllllllll}
${\rm J}_{12}$ & $:$ & $e_{1}e_{1} = e_{1}$ & $e_{2}e_{2} = e_{2}$ & $e_{3}e_{3} = e_{1} + e_{2}$ & $e_{1}e_{3} = \frac{1}{2}e_{3}$ & $e_{2}e_{3} = \frac{1}{2}e_{3}$ & \\  
${\rm J}_{13}$ & $:$ & $e_{1}e_{1} = e_{1}$ & $e_{1}e_{2} = \frac{1}{2}e_{2}$ & $e_{1}e_{3} = e_{3}$ & & & \\  
${\rm J}_{14}$ & $:$ & $e_{1}e_{1} = e_{1}$ & $e_{1}e_{2} = \frac{1}{2}e_{2}$ & $e_{1}e_{3} = \frac{1}{2}e_{3}$ & & & \\  
${\rm J}_{15}$ & $:$ & $e_{1}e_{1} = e_{1}$ & $e_{2}e_{2} = e_{3}$ & $e_{1}e_{2} = \frac{1}{2}e_{2}$ & & & \\  
${\rm J}_{16}$ & $:$ & $e_{1}e_{1} = e_{1}$ & $e_{2}e_{2} = e_{3}$ & $e_{1}e_{2} = \frac{1}{2}e_{2}$ & $e_{1}e_{3} = e_{3}$ & & \\ 
${\rm J}_{17}$ & $:$ & $e_{1}e_{1} = e_{1}$ & $e_{2}e_{2} = e_{2}$ & $e_{1}e_{3} = \frac{1}{2}e_{3}$ & $e_{2}e_{3} = \frac{1}{2}e_{3}$ & & \\  
${\rm J}_{18}$ & $:$ & $e_{1}e_{1} = e_{1}$ & $e_{1}e_{2} = \frac{1}{2}e_{2}$ & & & & \\  
${\rm J}_{19}$ & $:$ & $e_{1}e_{1} = e_{1}$ & $e_{2}e_{2} = e_{2}$ & $e_{1}e_{3} = \frac{1}{2}e_{3}$ & & & \\ 
\end{longtable}
 
\end{theorem}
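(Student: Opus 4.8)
The plan is to exploit the idempotent structure of Jordan algebras and reduce the classification to a short, Peirce-organized case analysis. Since $A$ is commutative, its multiplication is a symmetric bilinear map, and I would first dispose of the nil case: if $A$ is nil, a brief inspection of the lower powers $A\supseteq A^{2}\supseteq A^{3}\supseteq\cdots$ shows that every $3$-dimensional nilpotent commutative algebra is in fact associative (when $A^{3}=0$ all associators vanish, and otherwise $\dim A/A^{2}=1$, so $A$ is one-generated with $A^{4}=0$ and every associator of the generator again lands in $A^{4}=0$ or vanishes by commutativity). Such an algebra therefore falls under the associative commutative case already settled in Theorem \ref{3-dim commassoc}. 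Hence I may assume $A$ carries a nonzero idempotent $e$.

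Fixing such an $e$, I would use the Peirce decomposition $A=A_{0}(e)\oplus A_{1/2}(e)\oplus A_{1}(e)$, where $A_{\lambda}(e)=\{x:e\cdot x=\lambda x\}$, together with the standard Jordan multiplication rules $A_{1}A_{0}=0$, $A_{i}A_{i}\subseteq A_{i}$ for $i\in\{0,1\}$, $A_{1/2}A_{i}\subseteq A_{1/2}$, and $A_{1/2}A_{1/2}\subseteq A_{0}\oplus A_{1}$. The coefficient $\tfrac12$ pervading ${\rm J}_{12}$--${\rm J}_{19}$ is precisely the footprint of a nonzero Peirce-$\tfrac12$ component, so the genuinely non-associative algebras are exactly those admitting an idempotent with $A_{1/2}(e)\neq 0$. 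Because $\dim A=3$, only finitely many dimension vectors $(\dim A_{0},\dim A_{1/2},\dim A_{1})$ occur, and $A_{0}(e)$ and $A_{1}(e)$ are themselves Jordan subalgebras of dimension at most $2$, which I may replace by their known forms (zero, $\mathbb{C}e$, or a two-dimensional associative commutative algebra).

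I would then run the case analysis on these dimension vectors. The remaining data in each case are the structures on $A_{0}$ and $A_{1}$, the action of $A_{0}$ and $A_{1}$ on $A_{1/2}$ (a Jordan-bimodule condition imposed by the Jordan identity), and the symmetric pairing $A_{1/2}\times A_{1/2}\to A_{0}\oplus A_{1}$. Linearizing the Jordan identity on mixed Peirce triples constrains these sharply; in particular it pins down, for $x\in A_{1/2}$, whether $x^{2}$ lands in $A_{0}$ or in $A_{1}$, which is exactly what separates ${\rm J}_{15}$ from ${\rm J}_{16}$. Having imposed all constraints, I would invoke the automorphisms of $A$ fixing the idempotent, namely the orthogonal group of the symmetric pairing on $A_{1/2}$ together with the scalings of $A_{0}$ and $A_{1}$, to normalize the surviving structure constants to the canonical tables listed, exactly the orbit reduction of Step $2$ above.

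The main obstacle is this final orbit reduction of the vector-valued symmetric pairing on $A_{1/2}$ under the stabilizer of the chosen idempotent, carried out simultaneously with the choice of that idempotent: distinct idempotents furnish distinct Peirce pictures of the same algebra, so I must argue both that the canonical forms are exhaustive and, more delicately, that the final list is irredundant. For the latter I would separate the algebras by inexpensive isomorphism invariants --- associativity versus non-associativity, the dimension of the annihilator, the dimension of $A^{2}$, and the number and configuration of idempotents --- and check that these already distinguish ${\rm J}_{12}$--${\rm J}_{19}$ from one another and from the algebras of Theorem \ref{3-dim commassoc}. Verifying that each tabulated algebra indeed satisfies the Jordan identity is then a routine finite computation.
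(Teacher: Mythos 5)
You should first be aware that the paper itself offers no proof of this theorem: it is quoted in the preliminaries as a known classification of $3$-dimensional Jordan algebras and then used as input for the ${\rm Z}^2$/orbit machinery of Theorem A1. So your argument can only be measured against the standard literature proof, which is indeed the Peirce-decomposition route you propose. Most of your outline is sound: the nil case does collapse onto the associative list (finite-dimensional nil Jordan algebras are nilpotent --- this is Albert's theorem, which you use silently and should cite --- and a $3$-dimensional nilpotent commutative algebra is associative by exactly the power-chain argument you sketch); a non-nil power-associative algebra contains an idempotent; the Peirce rules you state are correct; and you correctly identify the delicate point, namely that the same algebra presents different Peirce pictures for different idempotents (e.g.\ ${\rm J}_{19}$ has type $(1,1,1)$ relative to $e_1$ but type $(0,1,2)$ relative to $e_1+e_2$), so exhaustiveness and irredundancy both need care.

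There is, however, one concrete error that would make your case analysis non-exhaustive. You claim the Jordan identity ``pins down, for $x\in A_{1/2}$, whether $x^{2}$ lands in $A_{0}$ or in $A_{1}$.'' This dichotomy is false: the only true constraint is the containment $A_{1/2}A_{1/2}\subseteq A_{0}\oplus A_{1}$, and mixed squares genuinely occur. In ${\rm J}_{12}$ (the simple Jordan algebra of symmetric $2\times 2$ matrices, the most important entry of the list) one has, relative to the idempotent $e_1$, that $A_1=\langle e_1\rangle$, $A_0=\langle e_2\rangle$, $A_{1/2}=\langle e_3\rangle$, and $e_3^{2}=e_1+e_2$ has \emph{nonzero components in both} $A_1$ and $A_0$. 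A case analysis organized around ``$x^{2}\in A_{0}$ or $x^{2}\in A_{1}$'' therefore misses ${\rm J}_{12}$ entirely, i.e.\ fails to produce the unique simple algebra. A second, related slip: your parenthetical that $A_{0}(e)$ and $A_{1}(e)$ may be replaced by ``zero, $\mathbb{C}e$, or a two-dimensional associative commutative algebra'' is also wrong in general, because there is a non-associative $2$-dimensional Jordan algebra --- precisely the subalgebra $\langle e_1,e_2\rangle$ of ${\rm J}_{18}$, with $e_1e_1=e_1$, $e_1e_2=\tfrac12 e_2$, $e_2^2=0$. This happens to be harmless in dimension $3$ (once $A_{1/2}(e)\neq 0$ one has $\dim A_0\le 1$, and $A_1(e)$ is unital, hence associative), but as stated it is another incorrect assertion, and it also undermines your reduction of the ``$A_{1/2}(e)=0$ for all idempotents'' case unless you argue it separately. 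Beyond these two points, bear in mind that what you have written is a plan rather than a proof: the enumeration of actions and pairings, the normalization under the idempotent stabilizer, and the cross-idempotent redundancy check are all still to be executed, and the executed version must allow mixed-component squares in the pairing.
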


\subsection{Right alternative algebras}

\begin{lemma}
Let ${\rm R}$\ be a complex $3$-dimensional right alternative algebra.
Then ${\rm R}$\ is anticommutative if and only if it is $2$-step
nilpotent. So if ${\rm R}$\ is anticommutative, then it\ is isomorphic
to the following algebra:

\begin{longtable}{lcll}
 ${\rm A}$&$:$&$e_{1}e_{2}=e_3$&$e_{2}e_{1}=-e_{3}.$
\end{longtable}
\end{lemma}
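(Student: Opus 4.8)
The plan is to split the biconditional into its substantive half, that anticommutativity forces $2$-step nilpotency, and an accompanying normal-form computation, after which the reverse half and the explicit isomorphism type drop out. Throughout write ${\rm A}^2={\rm A}\cdot{\rm A}$, let ${\rm Ann}({\rm A})$ denote the two-sided annihilator, and set $\tau(x,y,z):=(xy)z$. First I would rewrite right-alternativity in the presence of anticommutativity. The identity $\big(x,y,z\big)+\big(x,z,y\big)=0$ reads $(xy)z-x(yz)+(xz)y-x(zy)=0$; since $zy=-yz$ forces $x(zy)=-x(yz)$, the two inner terms cancel and one is left with the single relation $(xy)z=-(xz)y$. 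Anticommutativity of the product already makes $\tau$ skew in its first two slots, and this relation makes it skew in its last two; as the transpositions $(12)$ and $(23)$ generate $S_3$, the map $\tau$ is totally antisymmetric. Using $x(yz)=-\tau(y,z,x)$ one checks the converse as well, so for anticommutative algebras right-alternativity is \emph{exactly} the statement that $\tau$ is alternating.

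The core step is to prove that an anticommutative right alternative ${\rm A}$ is $2$-step nilpotent. In dimension $3$ an alternating trilinear map is a scalar multiple of the volume form, so $\tau(x,y,z)=\omega(x,y,z)\,v$, where $\omega(e_1,e_2,e_3)=1$ and $v:=(e_1e_2)e_3\in{\rm A}^2$, and I must show $v=0$. Expanding any $u\in{\rm A}^2$ in products $e_ie_j$ and applying the formula for $\tau$ gives $uz\in\langle v\rangle$ for all $z$, so in particular $v\,e_i=\lambda_i v$ for scalars $\lambda_i$. On the one hand $(v\,e_i)e_i=\lambda_i^2 v$; on the other, writing $v=\sum_k v_k e_k$, we get $(v\,e_i)e_i=\sum_k v_k\,\tau(e_k,e_i,e_i)=\sum_k v_k\,\omega(e_k,e_i,e_i)\,v=0$. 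Hence $\lambda_i^2 v=0$, so $\lambda_i=0$ for every $i$ as soon as $v\neq0$. Finally, with $p:=e_1e_2=\sum_k p_k e_k$ and $v=p\,e_3$, a direct expansion yields $v\,e_1=\tau(p,e_3,e_1)=p_2\,v$ and $v\,e_2=\tau(p,e_3,e_2)=-p_1\,v$, i.e. $\lambda_1=p_2$ and $\lambda_2=-p_1$; thus $p_1=p_2=0$, so $p\in\langle e_3\rangle$ and $v=p\,e_3=0$, a contradiction. Therefore $v=0$, every triple product vanishes, and ${\rm A}$ is $2$-step nilpotent. I expect this to be the main obstacle: the delicate point is excluding $v\neq0$, which is precisely what rules out the anticommutative but non-right-alternative example $\mathfrak{so}_3$ (the cross-product algebra), and it is here that the low dimension is used decisively.

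It remains to classify the nonzero anticommutative $2$-step nilpotent $3$-dimensional algebras and to record the converse. Since ${\rm A}^2\subseteq{\rm Ann}({\rm A})$, the alternating product factors through the exterior square $\Lambda^2\big({\rm A}/{\rm Ann}({\rm A})\big)$ and has image ${\rm A}^2$; a nonzero such product needs $\dim\big({\rm A}/{\rm Ann}({\rm A})\big)\geq2$, forcing $\dim{\rm Ann}({\rm A})\leq1$, while $0\neq{\rm A}^2\subseteq{\rm Ann}({\rm A})$ forces $\dim{\rm Ann}({\rm A})\geq1$. Hence $\dim{\rm Ann}({\rm A})=\dim{\rm A}^2=1$ and ${\rm A}^2={\rm Ann}({\rm A})$. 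Choosing $e_3$ to span ${\rm A}^2$ and $e_1,e_2$ a basis of a complement, the only possibly nonzero product is $e_1e_2=\xi e_3$ with $\xi\neq0$, which after rescaling becomes $e_1e_2=e_3=-e_2e_1$; this is precisely the algebra ${\rm A}$ of the statement, unique up to isomorphism. The reverse implication is then immediate, since this normal form is visibly anticommutative; combined with the forward direction it yields the asserted equivalence together with the claimed isomorphism type.
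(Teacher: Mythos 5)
Your proposal concerns a lemma that the paper states \emph{without any proof}, so there is no in-paper argument to compare against; judged on its own, the substantive core of your argument is correct and complete. The reduction of right alternativity under anticommutativity to the single relation $(xy)z=-(xz)y$, the conclusion that $\tau(x,y,z)=(xy)z$ is totally alternating (skew in positions $1,2$ by anticommutativity, in positions $2,3$ by the derived relation), the identification $\tau=\omega\otimes v$ with $v=(e_1e_2)e_3$ available precisely because $\dim {\rm R}=3$, and the contradiction scheme (from $v\in{\rm R}^2$ one gets $ve_i=\lambda_iv$; computing $(ve_i)e_i$ two ways gives $\lambda_i^2v=0$; the expansions $\lambda_1=p_2$, $\lambda_2=-p_1$ then force $p=e_1e_2\in\langle e_3\rangle$, whence $v=p_3\,e_3e_3=0$) all check out, including the implicit uses of characteristic zero (namely $x^2=0$ from $xy=-yx$, and skew-symmetric $=$ alternating). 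The normal-form step via $\dim{\rm Ann}({\rm R})=\dim{\rm R}^2=1$ is also sound, and you correctly restrict it to algebras with nonzero multiplication, which is what the displayed algebra ${\rm A}$ presupposes.

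The one genuine defect is your last sentence. The claim that "the reverse implication is then immediate, since this normal form is visibly anticommutative" is a non sequitur: showing that the particular algebra ${\rm A}$ is anticommutative says nothing about an arbitrary $2$-step nilpotent right alternative algebra, which is what the converse direction of the stated biconditional quantifies over. In fact that converse, read literally, is false: the algebra ${\rm A}_{04}$ of Theorem~\ref{3-dim commassoc}, with multiplication $e_1e_1=e_2$, is commutative associative (hence right alternative), all of its triple products vanish, yet it is not anticommutative. So the "if and only if" in the lemma can only be intended as the forward implication (equivalently, as an equivalence within the class of anticommutative algebras, where the other direction is vacuous), and the correct move in a blind write-up is to prove the forward direction, flag that the literal converse fails, and stop --- not to assert that it follows. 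This does not damage what the lemma is used for: the content needed in the proof of Theorem A1 is exactly the part you establish, namely that an anticommutative (i.e.\ ${\rm R}^{+}=0$) $3$-dimensional right alternative algebra is $2$-step nilpotent and, if nontrivial, isomorphic to ${\rm R}_{00}$.
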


\begin{lemma}
Let ${\rm R}$\ be a right alternative algebra. Then

\begin{itemize}
\item ${\rm R}^{+}$ is a Jordan algebra.

\item If ${\rm R}$ is commutative, then ${\rm R}$\ is associative.

 
\end{itemize}
\end{lemma}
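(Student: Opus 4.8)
The plan is to treat the two assertions separately, reducing each to an associator identity forced by right alternativity; throughout I abbreviate the left and right multiplications by $\lambda_a(b)=a\cdot b$ and $\rho_a(b)=b\cdot a$. First I record the operator content of the defining identity. Right alternativity is equivalent to $\rho_{a\cdot a}=\rho_a^{2}$, hence to its linearisation $\rho_a\rho_b+\rho_b\rho_a=\rho_{a\cdot b+b\cdot a}$; and reading $(x,y,z)=-(x,z,y)$ with the middle argument as operand yields $\lambda_{x\cdot z}=\lambda_x\rho_z+\lambda_x\lambda_z-\rho_z\lambda_x$, which for $z=x$ specialises to $\lambda_{x\cdot x}=\lambda_x^{2}+[\lambda_x,\rho_x]$.

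For the first item, $R^{+}$ is commutative by construction, so only the Jordan identity must be checked, and in operator form it reads $[\mathsf m_{x\circ x},\mathsf m_x]=0$, where $\mathsf m_a=\frac12(\lambda_a+\rho_a)$ is the multiplication of $R^{+}$ and $x\circ x=x\cdot x$. The plan is to expand $4[\mathsf m_{x\circ x},\mathsf m_x]=[\lambda_{x\cdot x}+\rho_{x\cdot x},\lambda_x+\rho_x]$ and substitute $\rho_{x\cdot x}=\rho_x^{2}$ and $\lambda_{x\cdot x}=\lambda_x^{2}+[\lambda_x,\rho_x]$; then $[\lambda_x^{2},\lambda_x]$ and $[\rho_x^{2},\rho_x]$ vanish and, after collecting terms, the whole expression collapses to $2\left([\lambda_x,\rho_x]\lambda_x-\rho_x[\lambda_x,\rho_x]\right)$. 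Thus the Jordan identity for $R^{+}$ is equivalent to the single operator identity $[\lambda_x,\rho_x]\lambda_x=\rho_x[\lambda_x,\rho_x]$, that is, to $(x,x\cdot y,x)=(x,y,x)\cdot x$. This last identity is where the real difficulty lies: direct rewriting through the defining identity alone is circular, and I expect to establish it by feeding the Teichm\"uller associator identity the vanishing associators $(a,x,x)=0$ supplied by right alternativity, together with a Kleinfeld-type second linearisation. This verification is the main obstacle of the lemma.

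For the second item, suppose $R$ is commutative. In any commutative algebra $(z,y,x)=-(x,y,z)$, since commutativity rewrites $(z\cdot y)\cdot x$ as $x\cdot(y\cdot z)$ and $z\cdot(y\cdot x)$ as $(x\cdot y)\cdot z$. Setting $f(a,b,c):=(a\cdot b)\cdot c$, commutativity makes $f$ symmetric in its first two arguments and turns right alternativity $(x,y,z)=-(x,z,y)$ into $f(x,y,z)+f(x,z,y)=2f(y,z,x)$. Writing this relation for the three cyclic permutations of $(x,y,z)$ and abbreviating $a=f(x,y,z)$, $b=f(x,z,y)$, $c=f(y,z,x)$, I obtain the system $a+b=2c$, $a+c=2b$, $b+c=2a$, whose only solution is $a=b=c$. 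Hence $f$ is totally symmetric and $(x,y,z)=f(x,y,z)-f(y,z,x)=0$, so $R$ is associative. This half is elementary, and the whole weight of the proof rests on the operator identity isolated in the first item.
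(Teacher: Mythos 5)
Your second item is correct and complete: in a commutative algebra right alternativity becomes $f(x,y,z)+f(x,z,y)=2f(y,z,x)$ for $f(a,b,c)=(a\cdot b)\cdot c$, and the cyclic system $a+b=2c$, $c+a=2b$, $b+c=2a$ indeed forces $a=b=c$ over $\mathbb{C}$, so every associator vanishes. Your reduction in the first item is also correct: $\rho_{x\cdot x}=\rho_x^2$, $\lambda_{x\cdot x}=\lambda_x^2+[\lambda_x,\rho_x]$, and the Jordan identity for ${\rm R}^{+}$ is equivalent to the operator identity $[\lambda_x,\rho_x]\lambda_x=\rho_x[\lambda_x,\rho_x]$, i.e.\ to $(x,x\cdot y,x)=(x,y,x)\cdot x$. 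But at exactly this point the proof stops: you only announce that you \emph{expect} to establish this identity from the Teichm\"uller identity, and you yourself call it the main obstacle. Since the entire content of ``${\rm R}^{+}$ is Jordan'' has been concentrated into this one identity, what you have written is a correct reformulation, not a proof. (For what it is worth, the paper gives no argument for this lemma at all; it is quoted as the classical fact, going back to Albert, that the symmetrization of a right alternative algebra of characteristic $\neq 2$ is Jordan.)

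The missing step does follow your anticipated route, and no further ``Kleinfeld-type'' linearisation is needed: three instances of the Teichm\"uller identity $(ab,c,d)-(a,bc,d)+(a,b,cd)=a(b,c,d)+(a,b,c)d$ suffice. With $(a,b,c,d)=(x,y,x,x)$, right alternativity kills $(xy,x,x)$ and $(y,x,x)$, leaving
\begin{equation*}
-(x,yx,x)+(x,y,x^2)=(x,y,x)x;\tag{T1}
\end{equation*}
with $(a,b,c,d)=(x,x,y,x)$, rewriting $(x,x,yx)=-(x,yx,x)$ and $(x,x,y)=-(x,y,x)$ gives
\begin{equation*}
(x^2,y,x)-(x,xy,x)-(x,yx,x)=x(x,y,x)-(x,y,x)x;\tag{T2}
\end{equation*}
with $(a,b,c,d)=(x,x,x,y)$, rewriting $(x^2,x,y)=-(x^2,y,x)$, $(x,x^2,y)=-(x,y,x^2)$, $(x,x,xy)=-(x,xy,x)$, $(x,x,y)=-(x,y,x)$ and $(x,x,x)=0$ gives
\begin{equation*}
-(x^2,y,x)+(x,y,x^2)-(x,xy,x)=-x(x,y,x).\tag{T3}
\end{equation*}
Adding (T2) and (T3), the terms $(x^2,y,x)$ and $x(x,y,x)$ cancel; substituting (T1) into the result yields $-2(x,xy,x)=-2(x,y,x)x$, and dividing by $-2$ (characteristic zero) gives exactly $(x,xy,x)=(x,y,x)x$. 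Had you carried out this computation, your proof would be complete; as submitted, the first item has a genuine gap.
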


\begin{remark}
It is known that if ${\rm A}$ is power-associative, then so is ${\rm A}^{+}$. The converse is not true. For instance, any right alternative
algebra ${\rm R}$ is not necessarily power-associative however ${\rm 
R}^{+}$ is power-associative since ${\rm R}^{+}$ is a Jordan algebra.
\end{remark}

\subsubsection{The classification Theorem A1}

\begin{theoremA1}
\label{3-dim-alt}Let ${\rm R}$\ be a complex $3$-dimensional right
alternative algebra. Then ${\rm R}$\ is  commutative and listed in Theorem \ref{3-dim commassoc} 
or
 it is isomorphic to one of the following algebras:

\begin{longtable}{lllllllll}
${\rm R}_{00}$&$:$&$e_{1}e_{2}=e_3$&$e_{2}e_{1}=-e_{3}$\\
${\rm R}_{01}$ & $:$ & $e_{1}e_{1} = e_{2}$ & $e_{1}e_{3} = e_{2}$ & $e_{3}e_{1} = -e_{2}$ & & & \\ 
${\rm R}_{02}^{\alpha \neq 0}$ & $:$ & $e_{1}e_{2} = \big( 1+\alpha \big) e_{3}$ & $e_{2}e_{1} = \big( 1-\alpha \big) e_{3}$ & & & & \\ 
${\rm R}_{03}$ & $:$ & $e_{1}e_{1} = e_{1}$ & $e_{1}e_{2} = e_{2}$ & $e_{1}e_{3} = e_{3}$ & $e_{3}e_{1} = e_{3}$ & & \\ 
${\rm R}_{04}$ & $:$ & $e_{1}e_{1} = e_{1}$ & $e_{1}e_{2} = e_{2} + e_{3}$  & $e_{1}e_{3} = e_{3}$ & $e_{2}e_{1} = -e_{3}$& $e_{3}e_{1} = e_{3}$ & \\ 
${\rm R}_{05}$ & $:$ & $e_{1}e_{1} = e_{1}$ &   $e_{1}e_{3} = e_{3}$ & $e_{2}e_{1} = e_{2}$ &$e_{3}e_{1} = e_{3}$ & & \\ 
${\rm R}_{06}$ & $:$ & $e_{1}e_{1} = e_{1}$ & $e_{1}e_{2} = e_{2}$ & $e_{1}e_{3} = e_{3}$ & & & \\ 
${\rm R}_{07}$ & $:$ & $e_{1}e_{1} = e_{1}$ & $e_{2}e_{1} = e_{2}$ & $e_{3}e_{1} = e_{3}$ & & & \\ 
${\rm R}_{08}$ & $:$ & $e_{1}e_{1} = e_{1}$   & $e_{1}e_{3} = e_{3}$ & $e_{2}e_{1} = e_{2}$ & & & \\ 
${\rm R}_{09}$ & $:$ & $e_{1}e_{1} = e_{1}$   & $e_{2}e_{1} = e_{2}$ & $e_{2}e_{2} = e_{3}$ & & & \\ 
${\rm R}_{10}$ & $:$ & $e_{1}e_{1} = e_{1}$   & $e_{1}e_{2} = e_{2}$ & $e_{1}e_{3} = e_{3}$ & $e_{2}e_{2} = e_{3}$ & $e_{3}e_{1} = e_{3}$ & \\ 
${\rm R}_{11}$ & $:$ & $e_{1}e_{1} = e_{1}$   & $e_{1}e_{3} = e_{3}$ & $e_{2}e_{2} = e_{2}$ & $e_{3}e_{2} = e_{3}$ & & \\ 
${\rm R}_{12}$ & $:$ & $e_{1}e_{1} = e_{1}$ & $e_{2}e_{1} = e_{2}$ & & & & \\ 
${\rm R}_{13}$ & $:$ & $e_{1}e_{1} = e_{1}$ & $e_{1}e_{2} = e_{3}$ & $e_{2}e_{1} = e_{2} - e_{3}$ & & & \\ 
${\rm R}_{14}$ & $:$ & $e_{1}e_{1} = e_{1}$ & $e_{1}e_{2} = e_{2}$ & & & & \\ 
${\rm R}_{15}$ & $:$ & $e_{1}e_{1} = e_{1}$  & $e_{1}e_{3} = e_{3}$ & $e_{2}e_{2} = e_{2}$ & & & \\ 
${\rm R}_{16}$ & $:$ & $e_{1}e_{1} = e_{1}$ & $e_{2}e_{2} = e_{2}$ & $e_{3}e_{1} = e_{3}$ & & & \\ 
\end{longtable}

\noindent All listed algebras are non-isomorphic except: ${\rm R}_{02}^{\alpha
}\cong {\rm R}_{02}^{-\alpha }$.
\end{theoremA1}

\subsubsection{Proof of Theorem A1}

If ${\rm R}^{+}\in \big\{ {\rm A}_{01},{\rm A}_{02},{\rm A}%
_{03}\big\} \cup \big\{ {\rm A}_{06},\mathcal{\ldots },{\rm A}%
_{11}\big\} $, then ${\rm Z}^{2}\big( {\rm R}^{+},{\rm R}^{+}\big)
=\{0\}$. Further  ${\rm Z}^{2}\big( 
{\rm J}_{12},{\rm J}_{12}\big) =\varnothing $. So we study the following
cases:

\begin{enumerate}[I.]
    \item 

\underline{${\rm R}^{+}={\rm A}_{04}$.}\ Let $\theta \ =\ \big(
B_{1},B_{2},B_{3}\big) \neq 0$ be an arbitrary element of ${\rm Z}^{2}\big( 
{\rm A}_{04},{\rm A}_{04}\big) $. Then 
$\theta \ =\ \big( 0,\ \alpha\Delta _{13}, \ 0\big) $ for some $\alpha \in \mathbb{C}^{\ast }$. The
automorphism group of ${\rm A}_{04}$, ${\rm Aut}\big( {\rm A}_{04}\big) $, consists of the automorphisms $\varphi $ given by a matrix of
the following form:%
\begin{equation*}
\varphi =%
\begin{pmatrix}
a_{11} & 0 & 0 \\ 
a_{21} & a_{11}^{2} & a_{23} \\ 
a_{31} & 0 & a_{33}%
\end{pmatrix}%
.
\end{equation*}%
Let $\varphi =\bigl(a_{ij}\bigr)\in $ ${\rm Aut}\big( {\rm A}_{04}\big) 
$. Then $\theta \ast \varphi =\big( 0, \ \beta \Delta _{13},\ 0\big) $ where $%
\beta =\alpha {a_{33}}{a^{-1}_{11}}$. Let $\varphi $\ be the following
automorphism: 
\begin{equation*}
\varphi =%
\begin{pmatrix}
\alpha & 0 & 0 \\ 
0 & \alpha ^{2} & 0 \\ 
0 & 0 & 1%
\end{pmatrix}%
.
\end{equation*}%
Then $\theta \ast \varphi =\big( 0,\ \Delta _{13},\ 0\big) $. So we get the
algebra ${\rm R}_{01}$.

\item
\underline{${\rm R}^{+}={\rm A}_{05}$.}\ Let $\theta \ =\ \big(
B_{1},B_{2},B_{3}\big) \neq 0$ be an arbitrary element of ${\rm Z}^{2}\big( 
{\rm A}_{05},{\rm A}_{05}\big) $. Then 
$\theta \ =\ \big( 0, \ 0,\ \alpha \Delta _{12}\big) $ for some $\alpha \in \mathbb{C}^{\ast }$. The
automorphism group of ${\rm A}_{05}$, ${\rm Aut}\big( {\rm A}%
_{05}\big) $, consists of the automorphisms $\varphi $ given by a matrix of
the following form:%
\begin{equation*}
\varphi =%
\begin{pmatrix}
\zeta a_{11} & \xi a_{12} & 0 \\ 
\xi a_{21} & \zeta a_{22} & 0 \\ 
a_{31} & a_{32} & \zeta a_{11}a_{22}+\xi a_{12}a_{21}%
\end{pmatrix}%
:\ 
\big( \zeta ,\xi \big) \in \big\{( 1,0) ,(
0,1) \big\}.
\end{equation*}%
Let $\varphi =\bigl(a_{ij}\bigr)\in $ ${\rm Aut}\big( {\rm A}_{05}\big) 
$. Then $\theta \ast \varphi =\big( 0,\ 0,\ \beta \Delta _{12}\big) $ where $%
\beta ^{2}=\alpha ^{2}$. So we get the algebras ${\rm R}_{02}^{\alpha
\neq 0}$.

\item
\underline{${\rm R}^{+}={\rm J}_{13}$.} Let $\theta \ =\ \big(
B_{1},B_{2},B_{3}\big) \neq 0$ be an arbitrary element of  ${\rm Z}^{2}\big( {\rm J}_{13},{\rm J}_{13}\big) $. Then $\theta \in \big\{ \eta _{1},\ \eta _{2}\big\} $ where%
\begin{longtable}{lcllcl}
$\eta _{1} $&$=$&$\big( 0,\ \frac{1}{2}\Delta _{12},\ \alpha \Delta _{12}\big),$
&
$\eta _{2} $&$=$&$\big( 0,\ -\frac{1}{2}\Delta _{12},\ 0\big) ,$
\end{longtable}%
for some $\alpha \in \mathbb{C}$. The automorphism group of ${\rm J}_{13} $, ${\rm Aut}\big( {\rm J}_{13}\big) $, consists of the
invertible matrices of the following form:%
\begin{equation*}
\varphi =%
\begin{pmatrix}
1 & 0 & 0 \\ 
a_{21} & a_{22} & 0 \\ 
0 & 0 & a_{33}%
\end{pmatrix}%
.
\end{equation*}

\begin{itemize}
\item $\theta =\eta _{1}$. If $\alpha =0$, we get the algebra ${\rm R}%
_{03}$. If $\alpha \neq 0$, we choose $\varphi $ to be the following
automorphism:%
\begin{equation*}
\varphi =%
\begin{pmatrix}
1 & 0 & 0 \\ 
0 & 1 & 0 \\ 
0 & 0 & \alpha%
\end{pmatrix}%
.
\end{equation*}%
Then $\theta \ast \varphi =\big( 0,\ \frac{1}{2}\Delta _{12},\ \Delta
_{12}\big) $. So we get the algebra ${\rm R}_{04}$.

\item $\theta =\eta _{2}$. We get the algebra ${\rm R}_{05}$.
\end{itemize}

\item
\underline{${\rm R}^{+}={\rm J}_{14}$.} Let $\theta \ =\ \big(
B_{1},B_{2},B_{3}\big) \neq 0$ be an arbitrary element of  ${\rm Z}^{2}\big( {\rm J}_{14},{\rm J}_{14}\big) $. Then 
$\theta \ =\ \big( 0, \ \alpha _{1}\Delta
_{12}+\alpha _{2}\Delta _{13}, \ \alpha _{3}\Delta _{12}+\alpha _{4}\Delta
_{13}\big) $ such that 
\begin{longtable}{rclrcl}
$\alpha _{1}^{2}-\alpha _{4}^{2} $&$=$&$0,$ &
$\alpha _{3}\big( \alpha _{1}+\alpha _{4}\big) $&$=$&$0,$ \\
$\alpha _{2}\big( \alpha _{1}+\alpha _{4}\big) $&$=$&$0,$ &
$4\alpha _{2}\alpha _{3}+4\alpha _{4}^{2} $&$=$&$1,$
\end{longtable}%
for some $\alpha _{1},\ldots ,\alpha _{4}\in \mathbb{C}$. The automorphism
group of ${\rm J}_{14}$, ${\rm Aut}\big( {\rm J}_{14}\big) $,
consists of the invertible matrices of the following form:%
\begin{equation*}
\varphi =%
\begin{pmatrix}
1 & 0 & 0 \\ 
a_{21} & a_{22} & a_{23} \\ 
a_{31} & a_{32} & a_{33}%
\end{pmatrix}%
.
\end{equation*}%
Let $\varphi =\bigl(a_{ij}\bigr)\in {\rm Aut}\big( {\rm J}_{14}\big) $
and write 
\begin{equation*}
\theta \ast \varphi =\big( 0,\ \beta _{1}\Delta _{12}+\beta _{2}\Delta_{13},\ \beta _{3}\Delta _{12}+\beta _{4}\Delta _{13}\big) .
\end{equation*}%
Then 
\begin{longtable}{lcl}
$\beta _{1} $&$=$&${(\det \varphi)^{-1} }\big( \alpha _{1}a_{22}a_{33}+\alpha
_{2}a_{32}a_{33}-\alpha _{3}a_{22}a_{23}-\alpha _{4}a_{23}a_{32}\big) ,$ \\
$\beta _{2} $&$=$&${(\det \varphi)^{-1} }\big( \alpha _{2}a_{33}^{2}-\alpha
_{3}a_{23}^{2}+\alpha _{1}a_{23}a_{33}-\alpha _{4}a_{23}a_{33}\big) ,$ \\
$\beta _{3} $&$=$&$-{(\det \varphi)^{-1} }\big( \alpha _{2}a_{32}^{2}-\alpha
_{3}a_{22}^{2}+\alpha _{1}a_{22}a_{32}-\alpha _{4}a_{22}a_{32}\big) ,$ \\
$\beta _{4} $&$=$&$-{(\det \varphi)^{-1} }\big( \alpha _{1}a_{23}a_{32}+\alpha
_{2}a_{32}a_{33}-\alpha _{3}a_{22}a_{23}-\alpha _{4}a_{22}a_{33}\big) .$
\end{longtable}%
Then%
\begin{equation*}
\begin{pmatrix}
\beta _{1} & \beta _{2} \\ 
\beta _{3} & \beta _{4}%
\end{pmatrix}%
= 
\begin{pmatrix}
a_{22} & a_{23} \\ 
a_{32} & a_{33}%
\end{pmatrix}
  ^{-1}%
\begin{pmatrix}
\alpha _{1} & \alpha _{2} \\ 
\alpha _{3} & \alpha _{4}%
\end{pmatrix}
\begin{pmatrix} 
a_{22} & a_{23} \\ 
a_{32} & a_{33}%
\end{pmatrix}.
\end{equation*}%
From here, we may assume $%
\begin{pmatrix}
\alpha _{1} & \alpha _{2} \\ 
\alpha _{3} & \alpha _{4}%
\end{pmatrix}%
\in \Biggl\{ 
\begin{pmatrix}
\alpha & 0 \\ 
0 & \beta%
\end{pmatrix}%
,%
\begin{pmatrix}
\alpha & 1 \\ 
0 & \alpha%
\end{pmatrix}%
\Biggr\} $.

\begin{itemize}
\item $%
\begin{pmatrix}
\alpha _{1} & \alpha _{2} \\ 
\alpha _{3} & \alpha _{4}%
\end{pmatrix}%
=%
\begin{pmatrix}
\alpha & 0 \\ 
0 & \beta%
\end{pmatrix}%
$. Then $\theta \in {\rm Z}^{2}\big( {\rm J}_{14},{\rm J}_{14}\big) $ if $%
\alpha _{1}^{2}=\alpha _{4}^{2},$ \ $4\alpha _{4}^{2}=1$. So we may assume $\big( \alpha _{1},\alpha _{2},\alpha _{3},\alpha _{4}\big) \in
\big\{ \big( \frac{1}{2},0,0,\frac{1}{2}\big) ,\big( -\frac{1}{2},0,0-\frac{1}{%
2}\big) ,\big( -\frac{1}{2},0,0,\frac{1}{2}\big),\big( \frac{1}{2},0,0,-\frac{1}{2}\big) \big\} $. Moreover, up to isomorphism, if  $\big( \alpha _{1},\alpha _{2},\alpha _{3},\alpha _{4}\big)=\big( -\frac{1}{2},0,0,\frac{1}{2}\big)$ or $\big( \alpha _{1},\alpha _{2},\alpha _{3},\alpha _{4}\big)=\big( \frac{1}{2},0,0,-\frac{1}{2}\big)$, then we get the same algebra. To see this, let $\phi $ be the following automorphism:%
\begin{equation*}
\phi =%
\begin{pmatrix}
1 & 0 & 0 \\ 
0 & 0 & 1 \\ 
0 & 1 & 0%
\end{pmatrix}%
.
\end{equation*}%
Then $\left( 0,-\frac{1}{2}\Delta _{1,2},\frac{1}{2}\Delta _{1,3}\right)
\ast \phi =\left( 0,\frac{1}{2}\Delta _{1,2},-\frac{1}{2}\Delta
_{1,3}\right) $. Hence we get
the algebras ${\rm R}_{06},$ ${\rm R}_{07},$ and  ${\rm R}_{08}$.

\item $%
\begin{pmatrix}
\alpha _{1} & \alpha _{2} \\ 
\alpha _{3} & \alpha _{4}%
\end{pmatrix}%
=%
\begin{pmatrix}
\alpha & 1 \\ 
0 & \alpha%
\end{pmatrix}%
$. Then $\theta \notin {\rm Z}^{2}\big( {\rm J}_{14},{\rm J}_{14}\big) $.
\end{itemize}

\item 
\underline{${\rm R}^{+}={\rm J}_{15}$.} Let $\theta \ =\ \big(
B_{1},B_{2},B_{3}\big) \neq 0$ be an arbitrary element of  $ {\rm Z}^{2}\big({\rm J}_{15}, 
{\rm J}_{15}\big) $. Then 
$\theta \ =\ \big( 0, \ -\frac{1}{2}\Delta_{12},\ \alpha \Delta _{12}\big) $ for some $\alpha \in \mathbb{C}$. The
automorphism group of ${\rm J}_{15}$, ${\rm Aut}\big( {\rm J}_{15}\big) $, consists of the invertible matrices of the following form:%
\begin{equation*}
\varphi =%
\begin{pmatrix}
1 & 0 & 0 \\ 
a_{21} & a_{22} & 0 \\ 
a_{21}^{2} & 2a_{21}a_{22} & a_{22}^{2}%
\end{pmatrix}%
.
\end{equation*}%
Let $\varphi $\ be the following automorphism:%
\begin{equation*}
\varphi =%
\begin{pmatrix}
1 & 0 & 0 \\ 
-\alpha & 1 & 0 \\ 
\alpha ^{2} & -2\alpha & 1%
\end{pmatrix}%
\end{equation*}%
Then $\theta \ast \varphi =\big( 0,\ -\frac{1}{2}\Delta _{12},\ 0\big) $. Hence
we get the algebra ${\rm R}_{09}$.

\item 
\underline{${\rm R}^{+}={\rm J}_{16}$.} Let $\theta \ =\ \big(
B_{1},B_{2},B_{3}\big) \neq 0$ be an arbitrary element of $ {\rm Z}^{2}\big( {\rm J}_{16},%
{\rm J}_{16}\big) $. Then $\theta \ =\ \big( 0, \ \frac{1}{2}\Delta_{12}, \ \alpha \Delta _{12}\big) $ for some $\alpha \in \mathbb{C}$. The
automorphism group of ${\rm A}_{16}$, ${\rm Aut}\big( {\rm A}%
_{16}\big) $, consists of the invertible matrices of the following form:%
\begin{equation*}
\varphi =%
\begin{pmatrix}
1 & 0 & 0 \\ 
a_{21} & a_{22} & 0 \\ 
-a_{21}^{2} & -2a_{21}a_{22} & a_{22}^{2}%
\end{pmatrix}%
.
\end{equation*}%
Let $\varphi $\ be the following automorphism:%
\begin{equation*}
\varphi =%
\begin{pmatrix}
1 & 0 & 0 \\ 
-\alpha & 1 & 0 \\ 
-\alpha ^{2} & 2\alpha & 1%
\end{pmatrix}%
\end{equation*}%
Then $\theta \ast \varphi =\big( 0,\ \frac{1}{2}\Delta _{12},\ 0\big) $. Hence
we get the algebra ${\rm R}_{10}$.

\item 
\underline{${\rm R}^{+}={\rm J}_{17}$.} Let $\theta \ =\ \big(
B_{1},B_{2},B_{3}\big) \neq 0$ be an arbitrary element of ${\rm Z}^{2}\big({\rm J}_{17},{\rm J}_{17}\big) $. Then $\theta \in \big\{ \eta _{1},\ \eta_{2}\big\} $ where%
\begin{longtable}{lcllcl}
$\eta _{1} $&$=$&$\big( 0,\ 0,\ \frac{1}{2}\Delta _{13}-\frac{1}{2}\Delta
_{23}\big) ,$ &
$\eta _{2} $&$=$&$\big( 0, \ 0,\ -\frac{1}{2}\Delta _{13}+\frac{1}{2}\Delta
_{23}\big) .$
\end{longtable}%
The automorphism group of ${\rm J}_{17}$, ${\rm Aut}\big( {\rm J}%
_{17}\big) $, consists of the invertible matrices of the following form:%
\begin{equation*}
\begin{pmatrix}
1 & 0 & 0 \\ 
0 & 1 & 0 \\ 
a_{31} & -a_{31} & a_{33}%
\end{pmatrix}
, \  
\begin{pmatrix}
0 & 1 & 0 \\ 
1 & 0 & 0 \\ 
a_{31} & -a_{31} & a_{33}%
\end{pmatrix}.
\end{equation*}%
Consider the following automorphism:%
\begin{equation*}
\varphi = 
\begin{pmatrix} 
0 & 1 & 0 \\ 
1 & 0 & 0 \\ 
0 & 0 & 1%
\end{pmatrix}.
\end{equation*}%
Then $\eta _{2}\ast \varphi =\eta _{1}$. Hence we get the algebra ${\rm R}%
_{11}$.

\item 
\underline{${\rm R}^{+}={\rm J}_{18}$.} Let $\theta \ =\ \big(
B_{1},B_{2},B_{3}\big) \neq 0$ be an arbitrary element of  $ {\rm Z}^{2}\big( {\rm J}_{18}, {\rm J}_{18}\big) $. Then $\theta \in \big\{ \eta _{1},\ \eta
_{2}\big\} $ where%
\begin{longtable}{lcllcl}
$\eta _{1} $&$=$&$\big( 0,\ -\frac{1}{2}\Delta _{12},\ \alpha \Delta _{12}\big) ,$ &
$\eta _{2} $&$=$&$\big( 0,\ \frac{1}{2}\Delta _{12},\ 0\big) ,$
\end{longtable}%
for some $\alpha \in \mathbb{C}$. The automorphism group of ${\rm J}_{18} $, ${\rm Aut}\big( {\rm J}_{18}\big) $, consists of the
invertible matrices of the following form:%
\begin{equation*}
\varphi =%
\begin{pmatrix}
1 & 0 & 0 \\ 
a_{21} & a_{22} & 0 \\ 
0 & 0 & a_{33}%
\end{pmatrix}%
.
\end{equation*}

\begin{itemize}
\item $\theta =\eta _{1}$. If $\alpha =0$, we get the algebra ${\rm R}%
_{12}$. If $\alpha \neq 0$, we choose $\varphi $ to be the following
automorphism:%
\begin{equation*}
\varphi =%
\begin{pmatrix}
1 & 0 & 0 \\ 
0 & 1 & 0 \\ 
0 & 0 & \alpha%
\end{pmatrix}%
.
\end{equation*}%
Then $\theta \ast \varphi =\big( 0,\ -\frac{1}{2}\Delta _{12},\ \Delta
_{12}\big) $. So we get the algebra ${\rm R}_{13}$.

\item $\theta =\eta _{2}$. We get the algebra ${\rm R}_{14}$.
\end{itemize}

\item 
\underline{${\rm R}^{+}={\rm J}_{19}$.} Let $\theta \ =\ \big(
B_{1},B_{2},B_{3}\big) \neq 0$ be an arbitrary element of  $ {\rm Z}^{2}\big( {\rm J}_{19}, {\rm J}_{19}\big) $. Then $\theta \in \big\{ \eta _{1},\ \eta
_{2}\big\} $ where%
\begin{longtable}{lcllcl}
$\eta _{1} $&$=$&$\big( 0,\ 0,\ \frac{1}{2}\Delta _{13}\big) ,$ &
$\eta _{2} $&$=$&$\big( 0,\ 0,\ -\frac{1}{2}\Delta _{13}\big) .$
\end{longtable}%
The automorphism group of ${\rm J}_{19}$, ${\rm Aut}\big( {\rm J}_{19}\big) $, consists of the invertible matrices of the following form:%
\begin{equation*}
\varphi =  
\begin{pmatrix} 
1 & 0 & 0 \\ 
0 & 1 & 0 \\ 
a_{31} & 0 & a_{33}%
\end{pmatrix}.
\end{equation*}%
The $\theta \ast \varphi =\theta $ for any $\varphi \in {\rm Aut}\big( 
{\rm J}_{19}\big) $. So we get the algebras ${\rm R}_{15}$ and $\rm{
R}_{16}$.
\end{enumerate}

\subsubsection{Corollaries from Theorem A1}
 
Obviously, all associative algebras are right alternative. So, as a byproduct, we have the following classification of complex $3$-dimensional associative algebras.

\begin{theorem}\label{3-dim assoc}
    Let ${\rm A}$\ be a complex $3$-dimensional associative algebra. Then $
{\rm A}$\ is isomorphic to one of commutative associative algebras listed in Theorem \ref{3-dim commassoc} or to one of the following algebras:
 
\begin{longtable}{llllllll}
 
${\rm A}_{12}$ & $:$ & $e_{1}e_{2} = e_{3}$ & $e_{2}e_{1} = -e_{3}$ & & & & \\ 
${\rm A}_{13}$ & $:$ & $e_{1}e_{1} = e_{2}$ & $e_{1}e_{3} = e_{2}$ & $e_{3}e_{1} = -e_{2}$ & & & \\ 
${\rm A}_{14}^{\alpha \neq 0}$ & $:$ & $e_{1}e_{2} = \big( 1+\alpha \big) e_{3}$ & $e_{2}e_{1} = \big( 1-\alpha \big) e_{3}$ & & & & \\ 
${\rm A}_{15}$ & $:$ & $e_{1}e_{1} = e_{1}$ & $e_{1}e_{2} = e_{2}$ & $e_{1}e_{3} = e_{3}$ & $e_{3}e_{1} = e_{3}$ & & \\ 
${\rm A}_{16}$ & $:$ & $e_{1}e_{1} = e_{1}$ & $e_{1}e_{3} = e_{3}$ & $e_{2}e_{1} = e_{2}$ & $e_{3}e_{1} = e_{3}$ & & \\ 
${\rm A}_{17}$ & $:$ & $e_{1}e_{1} = e_{1}$ & $e_{1}e_{2} = e_{2}$ & $e_{1}e_{3} = e_{3}$ & & & \\ 
${\rm A}_{18}$ & $:$ & $e_{1}e_{1} = e_{1}$ & $e_{2}e_{1} = e_{2}$ & $e_{3}e_{1} = e_{3}$ & & & \\ 
${\rm A}_{19}$ & $:$ & $e_{1}e_{1} = e_{1}$  & $e_{1}e_{3} = e_{3}$ & $e_{2}e_{1} = e_{2}$ & & & \\ 
${\rm A}_{20}$ & $:$ & $e_{1}e_{1} = e_{1}$ & $e_{1}e_{3} = e_{3}$& $e_{2}e_{2} = e_{2}$  & $e_{3}e_{2} = e_{3}$ & & \\ 
${\rm A}_{21}$ & $:$ & $e_{1}e_{1} = e_{1}$ & $e_{2}e_{1} = e_{2}$ & & & & \\ 
${\rm A}_{22}$ & $:$ & $e_{1}e_{1} = e_{1}$ & $e_{1}e_{2} = e_{2}$ & & & & \\ 
${\rm A}_{23}$ & $:$ & $e_{1}e_{1} = e_{1}$   & $e_{1}e_{3} = e_{3}$ & $e_{2}e_{2} = e_{2}$& & & \\ 
${\rm A}_{24}$ & $:$ & $e_{1}e_{1} = e_{1}$ & $e_{2}e_{2} = e_{2}$ & $e_{3}e_{1} = e_{3}$ & & & \\ 
\end{longtable}
 
\noindent
All listed algebras are non-isomorphic except: ${\rm A}_{14}^{\alpha
}\cong {\rm A}_{14}^{-\alpha }$.
\end{theorem}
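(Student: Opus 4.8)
\noindent
The plan rests on the elementary fact, already observed just above the statement, that associativity implies right alternativity: if $(x,y,z)_{\cdot}=0$ holds identically, then a fortiori $(x,y,z)_{\cdot}=-(x,z,y)_{\cdot}$. Consequently every complex $3$-dimensional associative algebra must occur in the classification of right alternative algebras, that is, it is either one of the commutative associative algebras of Theorem~\ref{3-dim commassoc} or isomorphic to one of ${\rm R}_{00},\dots,{\rm R}_{16}$ from Theorem~A1. The entire argument therefore reduces to sieving this already-complete finite list: for each candidate I decide whether its associator vanishes identically, discard the genuinely non-associative ones, and relabel the survivors.

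\noindent
The commutative members ${\rm A}_{01},\dots,{\rm A}_{11}$ are associative by Theorem~\ref{3-dim commassoc}, so nothing need be checked there. For the remaining candidates ${\rm R}_{00},\dots,{\rm R}_{16}$ I would simply evaluate the associator $(e_i,e_j,e_k)_{\cdot}$ on basis triples, reading the products off each multiplication table. Here right alternativity pays off: since the associator of each ${\rm R}_m$ is skew-symmetric in its last two arguments, it is enough to test the triples with $j<k$, the diagonal case $j=k$ being automatically zero, which trims the bookkeeping considerably.

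\noindent
Carrying this out, I expect exactly four algebras to fail associativity, each certified by a single nonzero associator: in ${\rm R}_{04}$ one finds $(e_1,e_1,e_2)_{\cdot}=-e_3$, in ${\rm R}_{13}$ one finds $(e_1,e_1,e_2)_{\cdot}=e_3$, in ${\rm R}_{09}$ one finds $(e_2,e_2,e_1)_{\cdot}=-e_3$, and in ${\rm R}_{10}$ one finds $(e_2,e_2,e_1)_{\cdot}=e_3$. The other thirteen algebras have all associators equal to zero and are therefore associative; renaming them yields precisely the list ${\rm A}_{12},\dots,{\rm A}_{24}$ (for instance ${\rm R}_{00}={\rm A}_{12}$, ${\rm R}_{01}={\rm A}_{13}$, ${\rm R}_{02}^{\alpha}={\rm A}_{14}^{\alpha}$, and so on through ${\rm R}_{16}={\rm A}_{24}$).

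\noindent
Finally, the non-isomorphism assertions come for free. An isomorphism of associative algebras is in particular an isomorphism of the underlying right alternative algebras, so Theorem~A1 already guarantees that the survivors are pairwise non-isomorphic, the sole coincidence ${\rm R}_{02}^{\alpha}\cong {\rm R}_{02}^{-\alpha}$ translating into ${\rm A}_{14}^{\alpha}\cong {\rm A}_{14}^{-\alpha}$. Moreover none of ${\rm A}_{12},\dots,{\rm A}_{24}$ can coincide with a commutative algebra from Theorem~\ref{3-dim commassoc}, since commutativity is an isomorphism invariant and each of these survivors is visibly non-commutative. The only genuine effort is the associator tabulation of the previous paragraph, which is entirely routine; there is no conceptual obstacle, the completeness of the list being supplied gratis by Theorem~A1.
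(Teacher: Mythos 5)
Your proposal is correct and follows essentially the same route as the paper: the paper obtains Theorem \ref{3-dim assoc} precisely as a corollary of Theorem A1 via the observation that every associative algebra is right alternative, so the classification amounts to sieving the list of Theorem A1. Your explicit associator certificates for the four discarded algebras (${\rm R}_{04}$, ${\rm R}_{09}$, ${\rm R}_{10}$, ${\rm R}_{13}$) are all correct and merely make explicit the routine verification the paper leaves implicit.
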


\begin{definition}
An associative algebra is called a $\mathfrak{perm}$  algebra if
 the following identity holds:%
\begin{longtable}{lcl}
$abc $&$= $&$acb.$\end{longtable}
\end{definition}

\begin{corollary}\label{perm}
Let ${\rm A}$ be a complex $3$-dimensional $\mathfrak{perm}$ algebra. Then ${\rm A}$ is isomorphic to one of the algebras given in Theorem \ref{3-dim assoc}  except:  ${\rm A}_{15},$ ${\rm A}_{17},$ ${\rm A}_{19},$ ${\rm A}_{20},$ ${\rm A}_{22}$ and ${\rm A}_{23}$.
\end{corollary}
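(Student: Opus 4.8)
The plan is to exploit that every $\mathfrak{perm}$ algebra is by definition associative, so Theorem \ref{3-dim assoc} already supplies the complete list of candidates; it only remains to decide which of them satisfy the extra identity $abc=acb$. The first step is to reformulate this identity in a form convenient for checking on a basis. In an associative algebra the product $abc$ is unambiguous and equals $a(bc)$, so $abc=acb$ is equivalent to $a(bc-cb)=0$ for all $a,b,c$. Writing $C:=\langle\, bc-cb : b,c\in {\rm A}\,\rangle$ for the linear span of all commutators, the $\mathfrak{perm}$ identity holds if and only if ${\rm A}\cdot C=0$, that is, left multiplication by any element annihilates the commutator subspace. This reduces the whole question to a finite computation on the structure constants.

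Next I would dispose of the commutative case at once: if ${\rm A}$ is one of the commutative associative algebras of Theorem \ref{3-dim commassoc}, then $bc-cb=0$ identically, so $C=0$ and the identity holds trivially. Hence every commutative associative algebra is $\mathfrak{perm}$, and all of these survive into the final list.

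The main body of the argument is then the case-by-case verification for the thirteen non-commutative representatives ${\rm A}_{12},\dots,{\rm A}_{24}$. For each one I would compute $C$ from the three basis commutators $e_{1}e_{2}-e_{2}e_{1}$, $e_{1}e_{3}-e_{3}e_{1}$, $e_{2}e_{3}-e_{3}e_{2}$, and then test whether every left product $e_{i}\cdot c$, with $c$ a spanning commutator, vanishes. For instance, in ${\rm A}_{22}$ one has $C=\langle e_{2}\rangle$ but $e_{1}e_{2}=e_{2}\neq 0$, so the identity fails; the same phenomenon (a commutator that is not left-annihilated) excludes ${\rm A}_{15},{\rm A}_{17},{\rm A}_{19},{\rm A}_{20},{\rm A}_{22},{\rm A}_{23}$. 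In each of the remaining non-commutative cases the commutator subspace lies in the left annihilator of ${\rm A}$ -- typically because every commutator lands in the span of an element that only ever occurs as an output and never multiplies anything nonzero from the left -- so that ${\rm A}\cdot C=0$ and the identity is satisfied.

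Finally, since being a $\mathfrak{perm}$ algebra is invariant under isomorphism and the algebras of Theorem \ref{3-dim assoc} are pairwise non-isomorphic (up to ${\rm A}_{14}^{\alpha}\cong {\rm A}_{14}^{-\alpha}$), testing one representative per class suffices, and the six excluded classes genuinely contain no $\mathfrak{perm}$ algebra. I expect no conceptual obstacle here; the only delicate point is the bookkeeping, namely ensuring that for each of the six excluded algebras an explicit triple $(a,b,c)$ with $abc\neq acb$ is exhibited, and that for each surviving algebra the annihilation ${\rm A}\cdot C=0$ is verified against every basis left multiplier rather than a single one.
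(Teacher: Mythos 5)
Your proposal is correct and follows essentially the same route as the paper: since $\mathfrak{perm}$ algebras are associative by definition, one filters the list of Theorem \ref{3-dim assoc} by the identity $abc=acb$, and your reformulation ${\rm A}\cdot[{\rm A},{\rm A}]=0$ (left multiplication annihilating commutators) is just a clean way to organize that case-by-case check. I verified your criterion on all thirteen non-commutative representatives and it reproduces exactly the paper's list of exclusions ${\rm A}_{15}$, ${\rm A}_{17}$, ${\rm A}_{19}$, ${\rm A}_{20}$, ${\rm A}_{22}$, ${\rm A}_{23}$.
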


\begin{definition}
An algebra is called a binary $\mathfrak{perm}$ algebra
if each $2$-generated subalgebra is a $\mathfrak{perm}$ algebra\footnote{Kunanbayev
and  Sartayev proved that these algebras are   alternative and satisfy $(ab)c +(cb)a = (ac)b +(ca)b$ \cite{KS}.}.
\end{definition}
 
\begin{corollary}\label{bperm}
Let ${\rm A}$ be a complex $3$-dimensional binary $\mathfrak{perm}$ algebra. Then ${\rm A}$ is isomorphic to one of the algebras given in Corollary \ref{perm}, i.e. ${\rm A}$ is $\mathfrak{perm}$.
\end{corollary}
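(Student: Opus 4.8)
The plan is to reduce everything to the already-established associative classification. First I would observe that a binary $\mathfrak{perm}$ algebra is alternative (the Kunanbayev--Sartayev result quoted in the footnote), so in dimension three Gainov's theorem \cite{G62} forces ${\rm A}$ to be associative; hence ${\rm A}$ is isomorphic to one of the algebras listed in Theorem \ref{3-dim assoc}. One inclusion is then immediate: the defining identity $abc=acb$ of a $\mathfrak{perm}$ algebra is inherited by every subalgebra, so every $\mathfrak{perm}$ algebra is binary $\mathfrak{perm}$, and in particular all the algebras appearing in Corollary \ref{perm} are binary $\mathfrak{perm}$.

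It remains to show that the six associative algebras excluded in Corollary \ref{perm}, namely ${\rm A}_{15}$, ${\rm A}_{17}$, ${\rm A}_{19}$, ${\rm A}_{20}$, ${\rm A}_{22}$ and ${\rm A}_{23}$, fail to be binary $\mathfrak{perm}$. The key simplification is that in any associative algebra the $\mathfrak{perm}$ identity $abc=acb$ is equivalent to $a\,(bc-cb)=0$ for all $a,b,c$; that is, left multiplication must annihilate every commutator. Thus to disqualify an algebra it suffices to produce two elements generating a subalgebra inside which this fails. For each of the six algebras the basis element $e_1$ is a one-sided unit: for a suitable second basis vector $e_i$ one has $e_1e_i=e_i$ while $e_ie_1=0$, so that $[e_1,e_i]=e_i$ and $e_1\,[e_1,e_i]=e_i\neq 0$. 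Concretely, the triple $(e_1,e_1,e_i)$ gives $e_1e_1e_i=e_i\neq 0=e_1e_ie_1$ inside the subalgebra generated by $e_1$ and $e_i$, witnessing a non-$\mathfrak{perm}$ $2$-generated subalgebra. Choosing $e_i=e_2$ for ${\rm A}_{15},{\rm A}_{17},{\rm A}_{22}$ and $e_i=e_3$ for ${\rm A}_{19},{\rm A}_{20},{\rm A}_{23}$ settles all cases, and these are exactly the algebras removed in passing from Theorem \ref{3-dim assoc} to Corollary \ref{perm}.

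I do not expect any real obstacle once the associative reduction is in place, since the remaining argument is a short finite check and the two inclusions combine to give precisely the list of Corollary \ref{perm}. The only point requiring slight care is the reduction step itself: the hypothesis \emph{binary} $\mathfrak{perm}$ only guarantees that each $2$-generated subalgebra is associative, which by itself does not yield associativity of the full $3$-generated algebra, because the associator is a genuinely ternary expression. This gap is exactly what the alternative property together with Gainov's theorem closes, so invoking those two facts is the essential move of the proof.
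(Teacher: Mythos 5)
Your proof is correct and follows exactly the route the paper implicitly relies on for this corollary: binary $\mathfrak{perm}$ implies alternative (the Kunanbayev--Sartayev result quoted in the footnote), Gainov's theorem then gives associativity in dimension $3$, and a finite check against Theorem \ref{3-dim assoc} shows that the six excluded algebras fail to be binary $\mathfrak{perm}$ while all $\mathfrak{perm}$ algebras trivially are. The paper states the corollary without writing out this verification, and your explicit witness $(e_1,e_1,e_i)$ with $e_1e_i=e_i$ and $e_ie_1=0$ (taking $e_i=e_2$ for ${\rm A}_{15},{\rm A}_{17},{\rm A}_{22}$ and $e_i=e_3$ for ${\rm A}_{19},{\rm A}_{20},{\rm A}_{23}$) correctly settles all six cases.
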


\begin{definition}
A right-alternative algebra is called a $\big(-1,1\big) $-algebra if
it is Lie-admissible, i.e. the following identity holds:%
\begin{longtable}{lcl}
$(x,y,z)+(y,z,x)+(z,x,y) $&$=$&$0.$
\end{longtable}
\end{definition}

\begin{corollary}\label{(-1,1)}
Let ${\rm A}$ be a complex $3$-dimensional $\big(-1,1\big) $-algebra. Then ${\rm A}$ is isomorphic to one of the  right-alternative algebras given in Theorem \ref{3-dim assoc}.
\end{corollary}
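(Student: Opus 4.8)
The plan is to deduce the corollary directly from Theorem A1. By definition a $(-1,1)$-algebra is right alternative, so every complex $3$-dimensional $(-1,1)$-algebra already occurs in the classification of Theorem A1: it is either commutative and therefore associative (by the lemma asserting that a commutative right alternative algebra is associative, placing it among the algebras of Theorem \ref{3-dim commassoc}), or it is isomorphic to one of ${\rm R}_{00},\dots,{\rm R}_{16}$. The task is thus reduced to deciding which algebras on this finite list obey the Lie-admissibility identity $(x,y,z)+(y,z,x)+(z,x,y)=0$.

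One direction is immediate: an associative algebra has identically vanishing associator, hence is simultaneously right alternative and Lie-admissible, so every algebra of Theorem \ref{3-dim assoc} is a $(-1,1)$-algebra. What remains is to test the identity on the non-associative right alternative algebras of Theorem A1, namely ${\rm R}_{04}$, ${\rm R}_{09}$, ${\rm R}_{10}$ and ${\rm R}_{13}$ (the four members of the list of Theorem A1 that do not already appear in Theorem \ref{3-dim assoc}). Since the identity is trilinear it suffices to evaluate the cyclic sum on basis triples $(e_i,e_j,e_k)$, and right alternativity — which yields $(x,y,z)=-(x,z,y)$, and in particular $(x,y,y)=0$ — annihilates the associator on all but a handful of triples for each algebra, so only a short list of cyclic sums need actually be computed.

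The main obstacle is precisely this verification. For each of the four algebras I would read the associator table off the multiplication table, locate the basis triples carrying a nonzero associator, and evaluate the three cyclic sums meeting them. The delicacy is entirely in the sign-bookkeeping: right alternativity ties together the nonzero associators $(a,a,b)$ and $(a,b,a)$ by a single sign while forcing the companion term $(b,a,a)$ to vanish, so the verdict for each algebra hinges on whether this skew pair cancels inside the cyclic sum $(a,a,b)+(a,b,a)+(b,a,a)$. Combining the outcome of these four checks with the associative inclusion of the previous paragraph determines exactly which algebras of Theorem A1 are $(-1,1)$-algebras, which is the content of the statement.
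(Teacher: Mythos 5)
Your reduction is set up correctly: a $(-1,1)$-algebra is right alternative by definition, hence appears in the list of Theorem A1; every associative algebra is trivially $(-1,1)$; and the question becomes whether the four non-associative algebras ${\rm R}_{04}$, ${\rm R}_{09}$, ${\rm R}_{10}$, ${\rm R}_{13}$ satisfy the cyclic identity. But the proposal stops exactly there: you describe how the check would go and never state its verdict, so as written there is no proof. Worse, when the check is carried out it goes the opposite way from what your plan needs. Your own ``skew pair'' remark already decides it: in a right alternative algebra $(b,a,a)=0$ and $(a,b,a)=-(a,a,b)$, so the cyclic sum $(a,a,b)+(a,b,a)+(b,a,a)$ vanishes \emph{identically}; moreover right alternativity makes the cyclic sum $(x,y,z)+(y,z,x)+(z,x,y)$ alternating in its three arguments, so it can only be nonzero on a triple of pairwise distinct basis vectors. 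For each of ${\rm R}_{04}$, ${\rm R}_{09}$, ${\rm R}_{10}$, ${\rm R}_{13}$ every associator on pairwise distinct basis vectors vanishes (e.g.\ in ${\rm R}_{09}$ the only nonzero associators are $(e_2,e_1,e_2)=e_3=-(e_2,e_2,e_1)$), so all four algebras \emph{are} Lie-admissible, i.e.\ they are non-associative $3$-dimensional $(-1,1)$-algebras. One can also see this directly: the commutator algebra of ${\rm R}_{09}$ has $[e_1,e_2]=-e_2$, $[e_1,e_3]=[e_2,e_3]=0$, which is a Lie algebra.

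Consequently the statement cannot be proved as literally printed: with the reference to Theorem \ref{3-dim assoc} it asserts that every $3$-dimensional $(-1,1)$-algebra is associative, and ${\rm R}_{09}$ refutes that. The reference is evidently a slip for Theorem A1 (the right alternative classification); this is the reading forced by Corollary \ref{21}, which states that the varieties of $(-1,1)$-, binary $(-1,1)$- and right alternative algebras coincide in dimension $\le 3$. Under that reading, the inclusion stated in the corollary is immediate from the definition (a $(-1,1)$-algebra is right alternative), and the substantive content --- the one actually used in Corollaries \ref{3dimbin-11} and \ref{21} --- is the converse you set up but left open: every algebra of Theorem A1 satisfies the $(-1,1)$ identity, by exactly the computation above. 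So the correct outcome of your four checks is ``all pass,'' not ``all fail''; your framework is sound, but the proof is unfinished and is aimed at a reading of the statement that the computation itself disproves.
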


\begin{definition}
An algebra is called a binary $\big( -1,1\big) $-algebra
if each $2$-generated subalgebra is a  $\big( -1,1\big)$-algebra.
\end{definition}

For a $\big( -1,1\big)$-algebra ${\rm A}$ the commutator algebra ${\rm A}^{-}$ is a Lie algebra, and if ${\rm A}$
is binary $\big( -1,1\big)$-algebra then ${\rm A}^{-}$ is binary-Lie. Moreover, any binary $\big(-1,1\big) $-algebra is $\big(-1,1\big) $-algebra if and only if it is Lie-admissible. Since all complex $3$-dimensional binary-Lie algebras are Lie, all complex $3$-dimensional binary $\big( -1,1\big)$-algebras are Lie-admissible. So we have the following result:

\begin{corollary}\label{3dimbin-11}
Let ${\rm A}$ be a complex $3$-dimensional binary $\big(-1,1\big) $-algebra. Then ${\rm A}$ is isomorphic to one of the algebras given in Corollary \ref{(-1,1)}, i.e. ${\rm A}$ is  a $\left(-1,1\right) $-algebra.
\end{corollary}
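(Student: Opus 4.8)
The plan is to pass to the commutator algebra ${\rm A}^-$ and leverage the classification of low-dimensional binary-Lie algebras. The first observation I would isolate is that right alternativity is really a two-variable identity: over $\mathbb{C}$ the identity $(x,y,z)=-(x,z,y)$ is equivalent, after setting $z=y$ and linearizing, to $(x,y,y)=0$, which only involves the subalgebra generated by $x$ and $y$. Hence right alternativity is detected on $2$-generated subalgebras, and so a binary $(-1,1)$-algebra ${\rm A}$ is automatically right alternative. Consequently the task reduces to showing that ${\rm A}$ is also Lie-admissible, since then it is a $(-1,1)$-algebra and Corollary \ref{(-1,1)} identifies it with one of the listed algebras.

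Next I would make explicit the bridge between Lie-admissibility of ${\rm A}$ and the Jacobi identity in ${\rm A}^-$. The technical step here is the associator computation: for \emph{any} algebra the Jacobiator $[[x,y],z]+[[y,z],x]+[[z,x],y]$ equals the alternating sum of associators over $S_3$, and when ${\rm A}$ is right alternative one may use $(a,b,c)=-(a,c,b)$ to collapse that alternating sum to a nonzero scalar multiple of $(x,y,z)+(y,z,x)+(z,x,y)$. Thus, for a right alternative ${\rm A}$, the algebra ${\rm A}^-$ satisfies the Jacobi identity if and only if ${\rm A}$ is Lie-admissible. This is exactly the equivalence quoted before the statement, and it is the one identity I would verify by hand (a routine but essential manipulation).

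The argument then closes as follows. If ${\rm B}$ is a $2$-generated subalgebra of ${\rm A}$ it is, by hypothesis, a $(-1,1)$-algebra, so ${\rm B}^-$ is a Lie algebra; any $2$-generated subalgebra of ${\rm A}^-$ lies inside such a ${\rm B}^-$ and is therefore Lie, which shows ${\rm A}^-$ is binary-Lie. Since $\dim{\rm A}=3$, the algebra ${\rm A}^-$ is a $3$-dimensional binary-Lie algebra, and every such algebra is in fact a Lie algebra; invoking this, ${\rm A}^-$ satisfies the full Jacobi identity. By the previous paragraph ${\rm A}$ is then Lie-admissible, and together with right alternativity this makes ${\rm A}$ a $(-1,1)$-algebra, so Corollary \ref{(-1,1)} applies.

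The main obstacle is the input that every complex $3$-dimensional binary-Lie algebra is already a Lie algebra: this is the sole genuinely nontrivial ingredient, and it is precisely where the dimension hypothesis is indispensable, the first counterexample (and correspondingly the first binary $(-1,1)$-algebra that fails to be $(-1,1)$) surfacing only in dimension $4$. Everything else is formal: the two-variable nature of right alternativity and the associator identity of the second paragraph.
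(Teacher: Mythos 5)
Your proof is correct and follows essentially the same route as the paper: pass to the commutator algebra ${\rm A}^-$, observe it is binary-Lie, invoke the fact that every complex $3$-dimensional binary-Lie algebra is Lie, and deduce Lie-admissibility and hence that ${\rm A}$ is a $(-1,1)$-algebra. The only difference is one of detail: you give elementary verifications (the two-variable reduction of right alternativity, and the collapse of the Jacobiator to a nonzero multiple of the cyclic associator sum for right alternative algebras) of ingredients the paper states without proof or attributes to Pchelintsev.
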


Pchelintsev in \cite{ser76bi} proved that binary $\big(-1,1\big)$-algebras are right alternative and satisfy \begin{longtable}{lcl}
$( xy,x,y) +  ( y,xy,x) + ( x,y,xy) $&$=$&$0$.
\end{longtable}
 The linearization of the last  identity  is as follows:
 \begin{longtable}{lll}
$(wz,y,x)+(w,z,yx)+(wx,y,z)+(w,x,yz)+(x,yz,w)+(x,wz,y)$&$+$\\
$(yz,w,x)+(y,z,wx)+(yx,w,z)+(y,x,wz)+(z,yx,w)+(z,wx,y)$&$=$&$0.$
\end{longtable}
Equivalently:
\begin{longtable}{lc}
$w(z(yx))+w(x(yz))+x((yz)w)+x((wz)y)+y(z(wx))+y(x(wz))+z((yx)w)+z((wx)y)$&$=$\\
\multicolumn{2}{r}{$(z(yx))w+(x(yz))w+((yz)w)x+((wz)y)x+(z(wx))y+(x(wz))y+((yx)w)z+((wx)y)z.$}
\end{longtable}

 \begin{lemma} \label{4-dim Bl}\footnote{The classification of $4$-dimensional binary Lie algebras is given by Gainov in 1963 \cite{ikp20}.}
     Let ${\rm B}$ be a $4$-dimensional complex non-Lie binary Lie algebra.
Then ${\rm B}$ is isomorphic to one (and only one) of the following
algebras:

\begin{longtable}{lcllll}
${\rm B}_{1}^{\alpha \neq 2}$&$:$ & $\left[ e_{1},e_{2}\right] =e_{2}$ & 
$\left[ e_{1},e_{3}\right] =e_{3}$ &  $\left[ e_{1},e_{4}\right] =\alpha e_{4}$ & $\left[ e_{2},e_{3}\right] =e_{4}$\\

${\rm B}_{2}$&$:$& $\left[ e_{1},e_{2}\right] =e_{3}$ & $\left[e_{3},e_{4}\right] =e_{3}$
\end{longtable}
 \end{lemma}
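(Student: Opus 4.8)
The plan is to exploit that binary Lie algebras form a variety: an anticommutative algebra is binary Lie precisely when it satisfies the (Gainov) identity $J(x,y,[x,y])=0$, where $J(x,y,z):=[[x,y],z]+[[y,z],x]+[[z,x],y]$ denotes the Jacobian. Since $J$ is alternating and trilinear, such an algebra is a Lie algebra if and only if $J\equiv 0$; hence ${\rm B}$ is non-Lie exactly when $J\not\equiv 0$. The first reduction is to realise ${\rm B}$ as a codimension-one extension of a $3$-dimensional Lie algebra. Any subspace ${\rm H}$ with $[{\rm B},{\rm B}]\subseteq {\rm H}$ is automatically an ideal, since $[{\rm B},{\rm H}]\subseteq[{\rm B},{\rm B}]\subseteq {\rm H}$; so, provided ${\rm B}$ is not perfect, I may choose a $3$-dimensional ideal ${\rm H}$ and a vector $e_1$ with ${\rm B}=\mathbb{C}e_1\oplus {\rm H}$. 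To secure this I would check that ${\rm B}$ is solvable: the only way this can fail in dimension four is a copy of $\mathfrak{sl}_2$ acting on a one-dimensional radical, and as $\mathfrak{sl}_2$ has no nontrivial one-dimensional module this produces only $\mathfrak{sl}_2\oplus\mathbb{C}$, which is a Lie algebra and hence excluded. Consequently ${\rm B}$ is not perfect, the ideal ${\rm H}$ exists, and—being a binary Lie algebra of dimension below four—${\rm H}$ is itself a Lie algebra, because $J$ is forced to vanish on a space of dimension at most three.

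Next I would translate the binary Lie identity into a condition on the single operator $D:={\rm ad}_{e_1}|_{{\rm H}}\in{\rm End}({\rm H})$. Writing $\delta(x,y):=D[x,y]-[Dx,y]-[x,Dy]$ for the failure of $D$ to be a derivation of ${\rm H}$, a direct computation using $J|_{{\rm H}\times {\rm H}\times {\rm H}}=0$ gives $J(e_1,x,y)=-\delta(x,y)$ for $x,y\in {\rm H}$, so the entire Jacobian of ${\rm B}$ is governed by $\delta$, and ${\rm B}$ is Lie exactly when $\delta=0$, i.e. when $D\in{\rm Der}({\rm H})$. Expanding $J(x,y,[x,y])=0$ and collecting coefficients in the $e_1$-components of $x$ and $y$ then shows it is equivalent to the two identities $\delta(x,Dy)+\delta(y,Dx)=0$ and $\delta(x,[x,y])=0$ for all $x,y\in {\rm H}$. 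Thus the problem becomes: for each $3$-dimensional complex Lie algebra ${\rm H}$, determine the operators $D$ with $\delta\neq 0$ satisfying these two constraints, modulo the residual freedom (conjugation by ${\rm Aut}({\rm H})$, addition of inner derivations arising from replacing $e_1$ by $e_1+h$, and rescaling of $e_1$).

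The computation then runs over the short list of $3$-dimensional Lie algebras. For the abelian ${\rm H}$ one has $\delta\equiv 0$, so only Lie algebras occur. For the Heisenberg algebra ${\rm H}=\langle e_2,e_3,e_4\rangle$ with $[e_2,e_3]=e_4$, the identity $\delta(x,[x,y])=0$ forces $e_4$ to be an eigenvector of $D$, while $\delta(x,Dy)+\delta(y,Dx)=0$ forces the action on $\langle e_2,e_3\rangle$ to be scalar; after rescaling $e_1$ this yields $D={\rm diag}(1,1,\alpha)$ with $\delta\neq0$ measured by $\alpha\neq 2$, reproducing exactly ${\rm B}_{1}^{\alpha\neq 2}$ (the value $\alpha=2$ being precisely the derivation, hence Lie, case). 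For ${\rm H}\cong\mathfrak{r}_2\oplus\mathbb{C}$—the non-abelian two-dimensional Lie algebra plus a central line—the only non-derivation $D$ compatible with the constraints sends the central generator into the derived line and produces ${\rm B}_{2}$. For all remaining ${\rm H}$ (the other solvable families and $\mathfrak{sl}_2$) I expect every $D$ satisfying the two identities to be a derivation, so these contribute nothing new. Finally I would record non-isomorphism: ${\rm B}_{1}^{\alpha}$ and ${\rm B}_{2}$ are separated by $\dim[{\rm B},{\rm B}]$ (three versus one), while within the family the spectrum $\{0,1,1,\alpha\}$ of ${\rm ad}_{e_1}$ makes $\alpha\neq 2$ a complete invariant, giving the ``one and only one'' clause.

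The main obstacle is the case analysis of the third paragraph: one must verify, Lie algebra by Lie algebra, that the two quadratic constraints on $D$ admit a non-derivation solution only for the Heisenberg algebra and for $\mathfrak{r}_2\oplus\mathbb{C}$, and in those two cases solve and normalize $D$ exactly. The delicate points are excluding the solvable families carrying a Jordan block (where the constraint $\delta(x,Dy)+\delta(y,Dx)=0$ must be shown to collapse $\delta$ to zero) and handling $\mathfrak{sl}_2$, together with the preliminary reduction to the solvable, non-perfect case on which the whole extension picture rests.
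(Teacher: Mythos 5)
First, a point of reference: the paper does not prove this lemma at all --- the footnote attached to the statement simply cites Gainov's 1963 classification \cite{ikp20} --- so your attempt must stand on its own rather than be measured against an argument in the text. Its computational core is in fact correct, and I checked it: with ${\rm B}=\mathbb{C}e_1\oplus{\rm H}$, ${\rm H}$ a $3$-dimensional ideal, $D={\rm ad}_{e_1}|_{\rm H}$ and $\delta(x,y)=D[x,y]-[Dx,y]-[x,Dy]$, one indeed has $J(e_1,x,y)=-\delta(x,y)$, and expanding $J(x,y,[x,y])=0$ for $x=ae_1+h$, $y=be_1+k$ and extracting the coefficients of $a^2$, $ab$, $a$ yields exactly your two constraints $\delta(h,Dh)=0$ and $\delta(h,[h,k])=0$. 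Moreover the cases you leave to ``I expect'' do work out, and more easily than you fear: for the algebras $\mathcal{L}_{02}$, $\mathcal{L}_{03}^{\alpha\neq 0}$ and $\mathcal{L}_{04}=\mathfrak{sl}_2$ of Theorem \ref{3-dim lie}, the single identity $\delta(h,[h,k])=0$ already forces $\delta\equiv 0$ (for the first two because ${\rm ad}_h({\rm H})$ is the whole two-dimensional derived algebra for generic $h$ and $\delta$ is skew; for $\mathfrak{sl}_2$ by a four-line computation with $h\in\{h,e,f,e+f\}$), so only the Heisenberg algebra and $\mathfrak{r}_2\oplus\mathbb{C}=\mathcal{L}_{03}^{0}$ contribute, as you predicted.

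The genuine gap is your very first reduction. To obtain the $3$-dimensional ideal you need $[{\rm B},{\rm B}]\neq{\rm B}$, and you secure this by asserting that the only non-solvable possibility in dimension four is $\mathfrak{sl}_2$ acting on a one-dimensional radical. That is Levi--Malcev structure theory for \emph{Lie} algebras, applied to an algebra whose whole point is that it is not yet known to be Lie; for binary Lie algebras the existence of radical/Levi decompositions and the identification of the simple objects are nontrivial theorems (they pass through Grishkov's result that simple binary Lie algebras in characteristic zero are Malcev, and Kuzmin's theorem that simple non-Lie Malcev algebras are $7$-dimensional). As written, the step is circular. Note also that it cannot be repaired by soft linear algebra: the anticommutative algebra with $[e_1,w]=w$ and $[x,y]=\alpha(x,y)e_1$ for $x,y,w$ in a $3$-dimensional complement $W$, $\alpha$ a nonzero alternating form, is perfect, so non-perfectness genuinely uses the identity $J(x,y,[x,y])=0$ (which is what kills this example). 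The same circularity appears in miniature in your claim that ${\rm H}$ is Lie ``because $J$ is forced to vanish on a space of dimension at most three'': an alternating trilinear map on a $3$-dimensional space need not vanish ($\Lambda^3$ of a $3$-space is a line, not zero); what you need is Gainov's theorem that $3$-dimensional binary Lie algebras are Lie, which has its own proof (if $J\neq 0$ there, then $[x,y]\in\langle x,y\rangle$ for all $x,y$, forcing $[x,y]=f(x)y-f(y)x$, which is a Lie product, a contradiction).

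Two smaller defects. In the Heisenberg case your normalization tacitly assumes the scalar by which $D$ acts on $\langle e_2,e_3\rangle$ modulo the centre is nonzero; if it is zero you cannot rescale $e_1$ to reach ${\rm diag}(1,1,\alpha)$, and that subcase gives $D\sim{\rm diag}(0,0,1)$, i.e.\ the algebra ${\rm B}_2$ again. Your final list survives only because ${\rm B}_2$ also arises from $\mathcal{L}_{03}^{0}$, but the Heisenberg analysis as stated is incomplete. Finally, since the case-by-case verification is declared rather than performed, what you have is a sound and workable plan with a correct computational engine, whose reduction to the non-perfect case is, as it stands, unjustified.
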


 \begin{theorem} \label{non (-1,1)}
     Let ${\mathbb B}$ be a complex $4$-dimensional binary $\big(-1,1\big) $-algebra. 
     Then ${\mathbb B}$\ is a $\big(-1,1\big) $-algebra or it is isomorphic
to one of the following algebras\footnote{For receiving similar multiplication tables we have to apply the basis change $e_1:=-e_1$ in algebras ${\mathbb B}_{04},$\ ${\mathbb B}_{05}^{\alpha},$\ ${\mathbb B}_{06},$ and ${\mathbb B}_{08}$.}:
 
\begin{longtable}{llllll}
${\mathbb B}_{01}$ & $:$ & $e_{1} e_{1} = 2e_{1}$ & $e_{1} e_{2} = 2e_{2}$ & $e_{1} e_{3} = 2e_{3}$ & $e_{1} e_{4} = 2e_{4}$ \\
 & & $e_{4} e_{1} = 2e_{4}$ & $e_{2} e_{3} = e_{4}$ & $e_{3} e_{2} = -e_{4}$ & $e_{3} e_{3} = -e_{4}$ \\
${\mathbb B}_{02}^{\alpha}$ & $:$ & $e_{1} e_{1} = 2e_{1}$ & $e_{1} e_{2} = 2e_{2}$ & $e_{1} e_{3} = 2e_{3}$ & $e_{1} e_{4} = 2e_{4}$ \\
 & & $e_{4} e_{1} = 2e_{4}$ & $e_{2} e_{3} = (\alpha + 1)e_{4}$ & $e_{3} e_{2} = (\alpha - 1)e_{4}$ & \\
${\mathbb B}_{03}$ & $:$ & $e_{1} e_{1} = 2e_{1}$ & $e_{1} e_{2} = 2e_{2} + e_{4}$ & $e_{2} e_{1} = e_{4}$ & $e_{1} e_{3} = 2e_{3}$ \\
 & & $e_{1} e_{4} = 2e_{4}$ & $e_{4} e_{1} = 2e_{4}$ & $e_{2} e_{3} = 2e_{4}$ & \\

{${\mathbb B}_{04}$} & $:$ & $e_{1} e_{1} = 2e_{1}$ & $e_{2} e_{1} = 2e_{2}$ & $e_{3} e_{1} = 2e_{3}$ & $e_{2} e_{3} = e_{4}$ \\
 & & $e_{3} e_{2} = -e_{4}$ & $e_{3} e_{3} = -e_{4}$ & & \\


{${\mathbb B}_{05}^{\alpha}$} & $:$ & $e_{1} e_{1} = 2e_{1}$ & $e_{2} e_{1} = 2e_{2}$ & $e_{3} e_{1} = 2e_{3}$ & $e_{2} e_{3} = (\alpha + 1)e_{4}$ \\
 & & $e_{3} e_{2} = (\alpha - 1)e_{4}$ & & & \\
 

{${\mathbb B}_{06}$} & $:$ & $e_{1} e_{1} = 2e_{1}$ & $e_{1} e_{2} = -e_{4}$ & $e_{2} e_{1} = 2e_{2} - e_{4}$ & $e_{3} e_{1} = 2e_{3}$ \\
 & & $e_{2} e_{3} = 2e_{4}$ & & & \\
 
${\mathbb B}_{07}$ & $:$ & $e_{1} e_{1} = 2e_{1}$ & $e_{1} e_{2} = 2e_{2}$ & $e_{1} e_{3} = 2e_{3}$ & $e_{4} e_{1} = 2e_{4}$ \\
 & & $e_{2} e_{3} = e_{4}$ & $e_{3} e_{2} = -e_{4}$ & & \\


{${\mathbb B}_{08}$} & $:$ & $e_{1} e_{1} = 2e_{1}$ & $e_{2} e_{1} = 2e_{2}$ & $e_{3} e_{1} = 2e_{3}$ & $e_{1} e_{4} = 2e_{4}$ \\
 & & $e_{2} e_{3} = e_{4}$ & $e_{3} e_{2} = -e_{4}$ & & \\
\end{longtable}
 
\noindent All listed algebras are non-isomorphic except: ${\mathbb B}_{02}^{\alpha}\cong {\mathbb B}_{02}^{-\alpha }$ and ${\mathbb B}_{05}^{\alpha}\cong {\mathbb B}_{05}^{-\alpha}$.
 \end{theorem}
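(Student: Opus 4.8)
The plan is to recover a $4$-dimensional binary $(-1,1)$-algebra $\mathbb{B}$ from its anticommutative shadow $\mathbb{B}^-$ together with a symmetric correction term, exactly the symmetric/skew split used throughout the paper via $x\cdot y = x\circ y + [x,y]$. First I would reduce the problem using the commutator algebra. Since binary $(-1,1)$-algebras are right alternative and $\mathbb{B}$ is a $(-1,1)$-algebra precisely when it is Lie-admissible, i.e. precisely when $\mathbb{B}^-$ is a Lie algebra, any $\mathbb{B}$ that is \emph{not} a $(-1,1)$-algebra must have $\mathbb{B}^-$ a genuinely non-Lie binary Lie algebra. Lemma \ref{4-dim Bl} then pins this shadow down completely: $\mathbb{B}^-$ is isomorphic to $\mathbb{B}_1^{\alpha \neq 2}$ or to $\mathbb{B}_2$. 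Thus the classification splits into the one-parameter family of brackets $\mathbb{B}_1^{\alpha}$ and the single bracket $\mathbb{B}_2$, and in each case the only remaining freedom is the symmetric part $\circ$.

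Next, for each fixed bracket $[\cdot,\cdot]$ from Lemma \ref{4-dim Bl} I would set $x\cdot y = x\circ y + [x,y]$ with $\circ$ an unknown symmetric bilinear map $\mathbb{B}\times\mathbb{B}\to\mathbb{B}$, and impose Pchelintsev's characterization of binary $(-1,1)$-algebras: right alternativity together with the linearized identity displayed just before Lemma \ref{4-dim Bl}. Because $\mathbb{B}$ is right alternative, $\mathbb{B}^+=(\mathbb{B},\circ)$ is automatically Jordan, so these identities translate into a finite system of at most quadratic equations in the structure constants of $\circ$ — the symmetric analogue of the skew cocycle space ${\rm Z}^2$ computed in the proof of Theorem A1, now relative to the binary Lie structure rather than to a Jordan structure. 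I expect that solving this system, with the bracket treated as fixed data, does two things at once: it constrains the admissible $\circ$, and it restricts the bracket itself, since the coupling between $\circ$ and $[\cdot,\cdot]$ is consistent only for special parameters. In particular the purely anticommutative choice $\circ=0$ already fails right alternativity for most $\alpha$, so nontrivial solutions survive only for a finite set of brackets; I anticipate these are $\mathbb{B}_1^{0}$ and $\mathbb{B}_1^{-1}$, while $\mathbb{B}_2$ and the remaining values of $\alpha$ admit no correction producing a non-$(-1,1)$ algebra.

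Finally I would classify the admissible symmetric parts up to isomorphism. The automorphism group of the fixed binary Lie algebra $\mathbb{B}^-$ acts on the space of solutions by $\circ \mapsto \varphi^{-1}\!\circ(\varphi\times\varphi)$, exactly as Step $2$ of the general procedure acts on ${\rm Z}^2$, and I would pick orbit representatives. The continuous parameter that survives inside $\circ$ (for instance the coefficient of $e_4$ in $e_2\circ e_3$) yields the families $\mathbb{B}_{02}^{\alpha}$ and $\mathbb{B}_{05}^{\alpha}$, while the discrete solutions give $\mathbb{B}_{01},\mathbb{B}_{03},\mathbb{B}_{04},\mathbb{B}_{06},\mathbb{B}_{07},\mathbb{B}_{08}$. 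The stated coincidences $\mathbb{B}_{02}^{\alpha}\cong\mathbb{B}_{02}^{-\alpha}$ and $\mathbb{B}_{05}^{\alpha}\cong\mathbb{B}_{05}^{-\alpha}$ should come from the automorphism of $\mathbb{B}_1^{0}$ interchanging $e_2$ and $e_3$ together with a compensating sign on $e_4$, which flips the sign of that parameter; the remaining algebras are separated by discrete invariants such as $\dim \mathbb{B}^2$, $\dim[\mathbb{B},\mathbb{B}]$, and the value of $\alpha$ attached to the underlying bracket. The basis change $e_1\mapsto -e_1$ noted in the footnote is what normalizes the shadow $\mathbb{B}_1^{-1}$-type brackets into the displayed form.

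The main obstacle I expect is the middle step: correctly carrying the bracket parameter $\alpha$ through the coupled system, identifying the finitely many brackets that admit a non-Lie-admissible symmetric correction, and verifying that no solutions are missed. The individual constraint computations are mechanical, but the bookkeeping of orbits under the $\alpha$-dependent automorphism groups, together with confirming exhaustiveness and pairwise non-isomorphism of the representatives $\mathbb{B}_{01},\dots,\mathbb{B}_{08}$, is where the real care is needed.
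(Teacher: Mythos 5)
Your proposal is correct and follows essentially the same route as the paper: reduce via Lemma \ref{4-dim Bl} to the non-Lie binary Lie shadows ${\rm B}_{1}^{\alpha\neq 2}$, ${\rm B}_{2}$, impose the binary $(-1,1)$ identities on a symmetric correction $\circ$ (this is exactly the paper's ${\rm Z}^{2}$ computed with respect to the binary Lie structure, which indeed turns out empty except for ${\rm B}_{1}^{0}$ and ${\rm B}_{1}^{-1}$), and then classify orbits under ${\rm Aut}\big({\mathbb B}^{-}\big)$, with the swap $e_{2}\leftrightarrow e_{3}$, $e_{4}\mapsto -e_{4}$ accounting for ${\mathbb B}_{02}^{\alpha}\cong{\mathbb B}_{02}^{-\alpha}$ and ${\mathbb B}_{05}^{\alpha}\cong{\mathbb B}_{05}^{-\alpha}$. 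The only work left implicit in your outline is the explicit computation of the cocycle families and the canonical-form reduction of the residual $2\times 2$ matrix of parameters, which is precisely the mechanical content of the paper's case analysis.
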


\begin{proof}
    Let ${\mathbb B}$ be a complex $4$-dimensional binary $\big(-1,1\big) $-algebra. Suppose that ${\mathbb B}$ is non $\big(-1,1\big) $-algebra. Then ${\mathbb B}^{-}$ is a non-Lie binary Lie algebra. By Lemma \ref{4-dim Bl}, we may assume ${\mathbb B}
^{-}\in \big\{{\rm B}_{1}^{\alpha \neq 2}, \ {\rm B}_{2}\big\}$. If ${\mathbb B}
^{-}\in \big\{{\rm B}_{1}^{\alpha \neq -1,0},\ {\rm B}_{2}\big\}$, 
then ${\rm Z}^{2}\big( {\mathbb B}^{-},{\mathbb B}^{-}\big) =\varnothing $\footnote{${\rm Z}^2$ is defined in a similar way to definition \ref{Z2}, but we have to use the identities of binary $\big( -1,1\big)$-algebras instead of the identities of semi-alternative algebras.}
\footnote{ In \cite{AS}  after the proof of Proposition 2, the following was mentioned:
{\it It remains an open question whether any binary Lie algebra is (binary $(-1,1))$-special, that is, can be embedded into a commutator algebra ${\rm A}^-$ for a certain binary $(-1,1)$-algebra ${\rm A}$}. 
As shown in Theorem 20, ${\rm Z}^2({\rm B}_2,{\rm B}_2) = \varnothing$. 
This gives a particular answer to the mentioned question: 
there are binary Lie algebras that can not be embedded in binary $(-1,1)$-algebras of the same dimension under the commutator product.}. So we study the
following cases:

\begin{enumerate}[I.]

\item \underline{${\mathbb B}^{-}={\rm B}_{1}^{0}.$} 
Let $\theta \ =\ \big(
B_{1},B_{2},B_{3}, B_4\big) \neq 0$ be an arbitrary element of  $ {\rm Z}^{2}\big( 
{\rm B}_{1}^{0},{\rm B}_{1}^{0}\big) $. Then $\theta \in \left\{ \eta _{1},\eta
_{2}\right\},$ where%
\begin{longtable}{lcl}
$\eta _{1} $&$=$&$\big( 2\Delta _{11},\ \Delta _{12},\ 
\Delta_{13},\ 
\alpha _{1}\Delta _{11}+\alpha _{2}\Delta _{12}+\alpha
_{3}\Delta _{13}+2\Delta _{14}+\alpha _{4}\Delta _{22}+\alpha
_{5}\Delta _{23}+\alpha _{6}\Delta _{33}\big),$  \\
$\eta _{2} $&$=$&$\big( -2\Delta _{11},\ 
-\Delta _{12},\ 
-\Delta_{13},\ 
\alpha _{1}\Delta _{11}+\alpha _{2}\Delta _{12}+\alpha
_{3}\Delta _{13}+\alpha _{4}\Delta _{22}+\alpha _{5}\Delta
_{23}+\alpha _{6}\Delta _{33}\big) ,$
\end{longtable}%
for some $\alpha _{1},\alpha _{2},\alpha _{3},\alpha _{4},\alpha _{5},\alpha
_{6}\in \mathbb{C}$. The automorphism group of ${\rm B}_{1}^{0}$, $\mathrm{Aut}%
\big( {\rm B}_{1}^{0}\big) $, consists of the invertible matrices of the
following form:%
\begin{equation*}
\varphi =%
\begin{pmatrix}
1 & 0 & 0 & 0 \\ 
a_{21} & a_{22} & a_{23} & 0 \\ 
a_{31} & a_{32} & a_{33} & 0 \\ 
a_{41} & a_{21}a_{32}-a_{22}a_{31} & a_{21}a_{33}-a_{31}a_{23} & 
a_{22}a_{33}-a_{23}a_{32}%
\end{pmatrix}%
.
\end{equation*}

\begin{itemize}
\item $\theta =\eta _{1}$. Let $\varphi =\bigl(a_{ij}\bigr)\in $ $\rm{Aut}%
\left({\rm B}_{1}^{0}\right) $. Then%
\begin{longtable}{lcl}
$\theta \ast \varphi$&$ =$&$\big( 2\Delta _{11},\ 
\Delta _{12},\ 
\Delta_{13},\ 
\beta _{1}\Delta _{11}+\beta _{2}\Delta _{12}+\beta
_{3}\Delta _{13}+2\Delta _{14}+\beta _{4}\Delta _{22}+\beta
_{5}\Delta _{23}+\beta _{6}\Delta _{33}\big),$
\end{longtable}%
where%
\begin{longtable}{lcl}
$\beta _{4} $&$=$&$\frac{\alpha
_{4}a_{22}^{2}+2\alpha _{5}a_{22}a_{32}+\alpha _{6}a_{32}^{2}}{a_{22}a_{33}-a_{23}a_{32}} ,$ \\
$\beta _{5} $&$=$&$\frac{\alpha
_{4}a_{22}a_{23}+\alpha _{5}a_{22}a_{33}+\alpha _{5}a_{23}a_{32}+\alpha
_{6}a_{32}a_{33}}{a_{22}a_{33}-a_{23}a_{32}} ,$ \\
$\beta _{6} $&$=$&$\frac{\alpha
_{4}a_{23}^{2}+2\alpha _{5}a_{23}a_{33}+\alpha _{6}a_{33}^{2}}{a_{22}a_{33}-a_{23}a_{32}}.$
\end{longtable}%
Hence $%
\begin{pmatrix}
-\beta _{5} & -\beta _{6} \\ 
\beta _{4} & \beta _{5}%
\end{pmatrix}%
=%
\begin{pmatrix}
a_{22} & a_{23} \\ 
a_{32} & a_{33}%
\end{pmatrix}%
^{-1}%
\begin{pmatrix}
-\alpha _{5} & -\alpha _{6} \\ 
\alpha _{4} & \alpha _{5}%
\end{pmatrix}%
\begin{pmatrix}
a_{22} & a_{23} \\ 
a_{32} & a_{33}%
\end{pmatrix}%
$. $\allowbreak $So we may assume $%
\begin{pmatrix}
-\alpha _{5} & -\alpha _{6} \\ 
\alpha _{4} & \alpha _{5}%
\end{pmatrix}%
\in \left\{ 
\begin{pmatrix}
0 & 1 \\ 
0 & 0%
\end{pmatrix}%
,%
\begin{pmatrix}
-\alpha  & 0 \\ 
0 & \alpha 
\end{pmatrix}%
\right\} $.

\begin{itemize}
\item $%
\begin{pmatrix}
-\alpha _{5} & -\alpha _{6} \\ 
\alpha _{4} & \alpha _{5}%
\end{pmatrix}%
=%
\begin{pmatrix}
0 & 1 \\ 
0 & 0%
\end{pmatrix}%
$. Consider the following automorphism:%
\begin{equation*}
\varphi _{1}=%
\begin{pmatrix}
1 & 0 & 0 & 0 \\ 
\alpha _{2}-\alpha _{3} & 1 & 0 & 0 \\ 
\alpha _{2} & 0 & 1 & 0 \\ 
-\frac{1}{2}\alpha _{2}^{2}-\frac{1}{2}\alpha _{1} & -\alpha _{2} & \alpha
_{2}-\alpha _{3} & 1%
\end{pmatrix}%
\allowbreak .
\end{equation*}%
Then $\theta \ast \varphi _{1}=\big( 2\Delta _{11},\ 
\Delta_{12},\ \Delta _{13},\ 2\Delta _{14}-\Delta _{33}\big) $. So we
get the algebra ${\mathbb B}_{01}$.

\item $%
\begin{pmatrix}
-\alpha _{5} & -\alpha _{6} \\ 
\alpha _{4} & \alpha _{5}%
\end{pmatrix}%
=%
\begin{pmatrix}
-\alpha  & 0 \\ 
0 & \alpha 
\end{pmatrix}%
$.

\begin{itemize}
\item $\alpha ^{2}\neq 1$. Consider the following automorphism:%
\begin{equation*}
\varphi _{2}=%
\begin{pmatrix}
1 & 0 & 0 & 0 \\ 
-\frac{\alpha _{3}}{\alpha +1} & 1 & 0 & 0 \\ 
-\frac{\alpha _{2}}{\alpha -1} & 0 & 1 & 0 \\ 
\frac{\alpha _{1}-\alpha ^{2}\alpha _{1}+2\alpha \alpha _{2}\alpha _{3}}{%
2\alpha ^{2}-2} & \frac{\alpha _{2}}{\alpha -1} & -\frac{\alpha _{3}}{\alpha
+1} & 1%
\end{pmatrix}%
.\allowbreak 
\end{equation*}%
Then $\theta \ast \varphi _{2}=\big( 2\Delta _{11},\ 
\Delta_{12}, \ \Delta _{13}, \ 2\Delta _{14}+\alpha \Delta _{23}\big) $.
So we get the algebras ${\mathbb B}_{02}^{\alpha \neq \pm 1}$.

\item $\alpha =1$. Consider the following matrices and let $\varphi =\varphi _{3}$
if $\alpha _{2}=0$ or $\varphi =\varphi _{4}$ if $\alpha _{2}\neq 0$:%
\begin{equation*}
\varphi _{3}=%
\begin{pmatrix}
1 & 0 & 0 & 0 \\ 
-\frac{1}{2}\alpha _{3} & 1 & 0 & 0 \\ 
0 & 0 & 1 & 0 \\ 
-\frac{1}{2}\alpha _{1} & 0 & -\frac{1}{2}%
\alpha _{3} & 1%
\end{pmatrix}%
, \ \varphi _{4}=%
\begin{pmatrix}
1 & 0 & 0 & 0 \\ 
-\frac{1}{2}\alpha _{3} & 1 & 0 & 0 \\ 
0 & 0 & \alpha _{2} & 0 \\ 
\frac{1}{2}\alpha _{2}\alpha _{3}-\frac{1}{2}\alpha _{1} & 0 & -\frac{1}{2}%
\alpha _{2}\alpha _{3} & \alpha _{2}%
\end{pmatrix}%
.
\end{equation*}%
Then 
\begin{center}
    $\theta \ast \varphi \in \big\{\big( 2\Delta _{11},\ 
    \Delta_{12},\ \Delta _{13},\  2\Delta _{14}+\Delta _{23}\big),\ 
    \big(
2\Delta _{11},\ \Delta _{12},\ \Delta _{13},\ \Delta_{12}+2\Delta _{14}+\Delta _{23}\big) \big\}$. 
\end{center} So we get the algebras 
${\mathbb B}_{02}^{1}$ and ${\mathbb B}_{03}$.

\item $\alpha =-1$. Consider the following matrices and let $\varphi =\varphi _{5}$
if $\alpha _{3}=0$ or $\varphi =\varphi _{6}$ if $\alpha _{3}\neq 0$:%
\begin{equation*}
\varphi _{5}=%
\begin{pmatrix}
1 & 0 & 0 & 0 \\ 
0 & 0 & -1 & 0 \\ 
\frac{1}{2}\alpha _{2} & 1 & 0 & 0 \\ 
-\frac{1}{2}\alpha _{1} & 0 & \frac{1}{2}%
\alpha _{2} & 1%
\end{pmatrix},
\ \varphi _{6}=%
\begin{pmatrix}
1 & 0 & 0 & 0 \\ 
0 & 0 & -\alpha _{3} & 0 \\ 
\frac{1}{2}\alpha _{2} & 1 & 0 & 0 \\ 
-\frac{1}{2}\alpha _{1}-\frac{1}{2}\alpha _{2}\alpha _{3} & 0 & \frac{1}{2}%
\alpha _{2}\alpha _{3} & \alpha _{3}%
\end{pmatrix}%
.
\end{equation*}%
Then 
\begin{center}
    $\theta \ast \varphi \in \big\{\big( 2\Delta _{11},\ 
    \Delta_{12},\ \Delta _{13},\ 2\Delta _{14}+\Delta _{23}\big), \ 
    \big(2\Delta _{11}, \ \Delta _{12},\ \Delta _{13},\ \Delta
_{12}+2\Delta _{14}+\Delta _{23}\big) \big\}$.
\end{center} So we get the algebras 
${\mathbb B}_{02}^{1}$ and $ {\mathbb B}_{03}$.
\end{itemize}
\end{itemize}

\item $\theta =\eta _{2}$. As in the case $\theta =\eta _{1}$, we may assume 
$%
\begin{pmatrix}
-\alpha _{5} & -\alpha _{6} \\ 
\alpha _{4} & \alpha _{5}%
\end{pmatrix}%
\in \left\{ 
\begin{pmatrix}
0 & 1 \\ 
0 & 0%
\end{pmatrix}%
,%
\begin{pmatrix}
-\alpha  & 0 \\ 
0 & \alpha 
\end{pmatrix}%
\right\} $.

\begin{itemize}
\item $%
\begin{pmatrix}
-\alpha _{5} & -\alpha _{6} \\ 
\alpha _{4} & \alpha _{5}%
\end{pmatrix}
=%
\begin{pmatrix}
0 & 1 \\ 
0 & 0%
\end{pmatrix}%
$. Then $\theta \ast \varphi _{1}=\big( -2\Delta _{11}, \ -\Delta
_{12},\ -\Delta _{13},\ -\Delta _{33}\big) $. So we get the algebra $%
{\mathbb B}_{04}$.

\item $%
\begin{pmatrix}
-\alpha _{5} & -\alpha _{6} \\ 
\alpha _{4} & \alpha _{5}%
\end{pmatrix}
=%
\begin{pmatrix}
-\alpha  & 0 \\ 
0 & \alpha 
\end{pmatrix}%
$.

\begin{itemize}
\item $\alpha ^{2}\neq 1$. Then $\theta \ast \varphi _{2}=\big( -2\Delta
_{11},\ -\Delta _{12},\ -\Delta _{13},\ \alpha \Delta _{23}\big) $.
So we get the algebras ${\mathbb B}_{05}^{\alpha \neq \pm 1}$.

\item $\alpha =1$. Let $\varphi =\varphi _{3}$ if $\alpha _{2}=0$ or $\varphi =\varphi
_{4}$ if $\alpha _{2}\neq 0$. Then 
\begin{center}
    $\theta \ast \varphi \in \big\{\big(
-2\Delta _{11},\ -\Delta _{12},\ -\Delta _{13},\ 
\Delta
_{23}\big) ,\ \big( -2\Delta _{11},\ -\Delta _{12},\ 
-\Delta_{13},\ \Delta _{12}+\Delta _{23}\big) \big\}$.
\end{center} So we get the algebras $%
{\mathbb B}_{05}^{1}$ and ${\mathbb B}_{06}$.

\item $\alpha =-1$. Let $\varphi =\varphi _{5}$ if $\alpha _{3}=0$ or $\varphi =\varphi
_{6}$ if $\alpha _{3}\neq 0$. Then 
\begin{center}$\theta \ast \varphi \in \big\{\big(
-2\Delta _{11},\ -\Delta _{12},\ -\Delta _{13},\ \Delta
_{23}\big) , \ \big( -2\Delta _{11},\ -\Delta _{12},\ -\Delta
_{13},\ \Delta _{12}+\Delta _{23}\big) \big\}$.
\end{center} So we get the algebras $%
{\mathbb B}_{05}^{1}$ and ${\mathbb B}_{06}$.
\end{itemize}
\end{itemize}
\end{itemize}

\item 
\underline{${\rm A}^{-}={\rm B}_{1}^{-1}$.} 
Let $\theta \ =\ \big(
B_{1},B_{2},B_{3}, B_4\big) \neq 0$ be an arbitrary element of  ${\rm Z}^{2}\big( {\rm B}_{1}^{-1},{\rm B}_{1}^{-1}\big) $. Then $\theta \in \big\{
\eta _{1},\ \eta _{2}\big\},$ where%
\begin{longtable}{lcllcl}
$\eta _{1} $&$=$&$\big( 2\Delta _{11},\ \Delta _{12},\ \Delta
_{13},\ \Delta _{14}\big) ,$ &
$\eta _{2} $&$=$&$\big( -2\Delta _{11},\ -\Delta _{12},\ -\Delta_{13},\ -\Delta _{14}\big).$
\end{longtable}
Thus we get the algebra ${\mathbb B}_{07}$ if $\theta =\eta _{1}$ and the
algebra ${\mathbb B}_{08}$ if $\theta =\eta _{2}$.
\end{enumerate}
\end{proof}

 \begin{corollary}\label{21}
The varieties of 
$\mathfrak{perm}$ and binary $\mathfrak{perm}$  algebras are coinciding 
in dimensions $\leq 3,$ but they are distinct in dimension $4$.
The varieties of 
$\big(-1,1\big)$-, binary $\big(-1,1\big)$- and right alternative algebras are coinciding 
in dimensions $\leq 3,$ but they are distinct in dimension $4$.
\end{corollary}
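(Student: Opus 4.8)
The plan is to separate the statement into a coincidence part (dimensions $\le 3$) and a separation part (dimension $4$), handling the $\mathfrak{perm}$/binary $\mathfrak{perm}$ chain and the $\big(-1,1\big)$/binary $\big(-1,1\big)$/right alternative chain in parallel. The backbone is the family of inclusions that hold in every dimension. Since $\mathfrak{perm}$ and $\big(-1,1\big)$ are each defined by polynomial identities and varieties are closed under subalgebras, every $2$-generated subalgebra of a $\mathfrak{perm}$ (resp.\ $\big(-1,1\big)$-) algebra again lies in that variety, so $\mathfrak{perm}\subseteq$ binary $\mathfrak{perm}$ and $\big(-1,1\big)\subseteq$ binary $\big(-1,1\big)$. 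By the theorem of Pchelintsev recalled before Lemma~\ref{4-dim Bl}, binary $\big(-1,1\big)\subseteq$ right alternative, giving the chain $\big(-1,1\big)\subseteq$ binary $\big(-1,1\big)\subseteq$ right alternative. Hence, to obtain coincidence in a fixed dimension it is enough to establish the reverse inclusions binary $\mathfrak{perm}\subseteq\mathfrak{perm}$ and right alternative $\subseteq\big(-1,1\big)$; the latter squeezes binary $\big(-1,1\big)$ between two equal varieties.

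For the coincidence in dimensions $\le 3$ I would argue dimension by dimension. In dimensions $1$ and $2$ the whole algebra is spanned by at most two vectors, hence is generated by at most two elements, so ``binary $X$'' forces it to be $X$; this yields binary $\mathfrak{perm}\subseteq\mathfrak{perm}$ at once. For right alternative $\subseteq\big(-1,1\big)$ in these dimensions I would use that in any right alternative algebra over $\mathbb C$ the antisymmetry $(a,b,c)=-(a,c,b)$ forces $(a,b,b)=0$, whence the Lie-admissible cyclic sum $(x,y,z)+(y,z,x)+(z,x,y)$ vanishes on every triple in which two of the three entries coincide; by trilinearity it then vanishes on all triples once the underlying space has dimension $\le 2$, since three entries drawn from a two-dimensional space always repeat. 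In dimension $3$ the inclusion binary $\mathfrak{perm}\subseteq\mathfrak{perm}$ is Corollary~\ref{bperm}, while right alternative $\subseteq\big(-1,1\big)$ follows from Theorem~A1: each listed algebra is either commutative associative (so all associators vanish and Lie-admissibility is automatic) or one of ${\rm R}_{00},\dots,{\rm R}_{16}$, for which one checks the cyclic associator identity directly on basis triples. Being right alternative and Lie-admissible, each is a $\big(-1,1\big)$-algebra, in agreement with Corollary~\ref{3dimbin-11}.

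For the separation in dimension $4$ I would exhibit explicit witnesses for the three strict inclusions. The strictness $\big(-1,1\big)\subsetneq$ binary $\big(-1,1\big)$ is already furnished by Theorem~\ref{non (-1,1)}: any of ${\mathbb B}_{01},\dots,{\mathbb B}_{08}$ is binary $\big(-1,1\big)$ but not $\big(-1,1\big)$. For binary $\big(-1,1\big)\subsetneq$ right alternative I would present a concrete $4$-dimensional right alternative algebra together with a pair $x,y$ for which $(xy,x,y)+(y,xy,x)+(x,y,xy)\neq 0$; since every binary $\big(-1,1\big)$-algebra satisfies this identity by Pchelintsev's theorem, such an algebra is right alternative but not binary $\big(-1,1\big)$. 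Finally, for $\mathfrak{perm}\subsetneq$ binary $\mathfrak{perm}$ I would give a concrete $4$-dimensional algebra, verify that every $2$-generated subalgebra satisfies $abc=acb$ (so it is binary $\mathfrak{perm}$), and exhibit a triple violating $abc=acb$ (so it is not $\mathfrak{perm}$).

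The main obstacle is the construction and verification of the two dimension-$4$ witnesses, and especially the binary $\mathfrak{perm}$ one: failing an identity needs only one bad triple, but \emph{confirming} that every $2$-generated subalgebra is $\mathfrak{perm}$ is a priori an infinite family of conditions. The crux is therefore to reduce ``binary $\mathfrak{perm}$'' to a finite computation by taking generic elements $x=\sum_i x_i e_i$ and $y=\sum_i y_i e_i$, describing the subalgebra they generate, and checking that the $\mathfrak{perm}$ identity holds identically in the scalars $x_i,y_i$; similarly, right alternativity of the other witness is confirmed by checking $(a,b,c)=-(a,c,b)$ on all basis triples. Once these finite verifications are carried out, all three strict inclusions hold and the corollary follows.
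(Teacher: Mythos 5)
Your treatment of the coincidence part (dimensions $\leq 3$) is sound, and in places more self-contained than the paper's: the inclusion chains $\mathfrak{perm}\subseteq$ binary $\mathfrak{perm}$ and $(-1,1)\subseteq$ binary $(-1,1)\subseteq$ right alternative are correctly justified, the observation that in dimensions $1,2$ a binary-$X$ algebra is itself $2$-generated and hence $X$ is right, and your argument that the cyclic sum $(x,y,z)+(y,z,x)+(z,x,y)$ in a right alternative algebra vanishes whenever two arguments coincide, hence is alternating and therefore identically zero when $\dim \leq 2$, is a nice clean piece of reasoning the paper does not spell out. In dimension $3$ you invoke Corollary \ref{bperm} exactly as the paper does, and your plan to check Lie-admissibility directly on the list of Theorem A1 is a legitimate finite verification (the paper instead routes this through Corollaries \ref{(-1,1)} and \ref{3dimbin-11}).

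The genuine gap is in the dimension-$4$ separation, which is the actual content of the corollary there: you must produce (i) a binary $\mathfrak{perm}$ algebra that is not $\mathfrak{perm}$ and (ii) a right alternative algebra that is not binary $(-1,1)$. For the third separation, $(-1,1)$ versus binary $(-1,1)$, you correctly cite Theorem \ref{non (-1,1)}; but for (i) and (ii) your proposal only says ``I would present a concrete algebra\ldots'' and describes how one \emph{would} verify it, without ever exhibiting the algebras. Existence of such witnesses cannot be deferred --- it is precisely what has to be proved, and nothing in your outline guarantees that the verification scheme you describe can be made to succeed on some algebra. The paper supplies both witnesses explicitly: for (i) the algebra ${\rm P}$ with $e_{1}e_{2}=2e_{4},\ e_{1}e_{3}=e_{2},\ e_{3}e_{1}=-e_{2},\ e_{3}e_{3}=e_{4}$ (nonassociative, hence not $\mathfrak{perm}$, yet binary $\mathfrak{perm}$), and for (ii) the algebra ${\rm R}$ with $e_{1}e_{2}=e_{3},\ e_{2}e_{1}=-e_{3},\ e_{3}e_{4}=2e_{3},\ e_{1}e_{4}=2e_{1},\ e_{4}e_{1}=2e_{1},\ e_{2}e_{4}=2e_{2},\ e_{4}e_{2}=2e_{2},\ e_{4}e_{4}=2e_{4}$. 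Until analogous concrete algebras (with their finite checks) are written down, two of the three strict inclusions in dimension $4$ remain unproven. A smaller point: for witness (i), verifying $abc=acb$ on $2$-generated subalgebras is not enough, since $\mathfrak{perm}$ algebras are by definition associative; you must also verify associativity of those subalgebras.
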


 \begin{proof}
By Corollary \ref{bperm}, it follows that all complex $3$-dimensional binary $\mathfrak{perm}$ algebras are $\mathfrak{perm}$ and thus associative. The situation is different in dimension $4$ since the following algebra is an example of a nonassociative binary $\mathfrak{perm}$ algebra which is not $\mathfrak{perm}$ because it is nonassociative.
\begin{longtable}{lclllllll}
${\rm P}$& $:$& $e_{1}e_{2}=2e_{4}$ & $e_{1}e_{3}=e_{2}$ & $e_{3}e_{1}=-e_{2}$ & $e_{3}e_{3}=e_{4}$
\end{longtable}

 The algebras given in Theorem \ref{non (-1,1)} are   right alternative but they are not $\big(-1,1\big)$-algebras.   
 The algebra given below is right alternative, but it is not binary $\big(-1,1\big)$-.

\begin{longtable}{lclllllll}
$R$&$:$&
$e_{1}e_{2}=e_{3}$&$e_{2}e_{1}=-e_{3}$&$e_{3}e_{4}=2e_{3}$&$e_{1}e_{4}=2e_{1}$\\ &&
$e_{4}e_{1}=2e_{1}$&$e_{2}e_{4}=2e_{2}$&$e_{4}e_{2}=2e_{2}$&$e_{4}e_{4}=2e_{4}$
\end{longtable}

 \end{proof}

Let us remember that each $3$-dimensional binary associative (=alternative) algebra is associative \cite{G62};
each $3$-dimensional binary Lie algebras is Lie   \cite{ikp20};
each $3$-dimensional binary $\mathfrak{perm}$ algebra is a $\mathfrak{perm}$ algebra (Corollary \ref{bperm}); 
 each $3$-dimensional binary $\big( -1,1\big)$-algebra is a  $\big( -1,1\big)$-algebra (Corollary \ref{3dimbin-11}).   
 The present observation raised the following question.
\medskip 

\noindent{\bf Open question.}
Is it true that each $3$-dimensional binary-$\Omega$\footnote{
By a binary $\Omega$ algebra we mean an algebra such that each $2$-generated subalgebra is an $\Omega$ algebra for a family of polynomial identities $\Omega$.}
 algebra is an $\Omega$ algebra?

 \begin{remark}
     Let us note that a similar question for $2$-dimensional mono $\Omega$-algebras does not make  sense. 
     Namely, the variety of mono-associative commutative algebras (i.e. each one-generated algebra is commutative associative) gives the variety of power-associative algebras. Still, the nontrivial $2$-dimensional anticommutative algebra (with the multiplication table $e_1e_2=e_2,$ \ $e_2e_1=-e_2$) is power-associative, but not commutative associative.

 \end{remark}

\subsection{Semi-alternative algebras}

\begin{definition}
An algebra ${\rm S}$ is called  semi-alternative if the following identity holds
\begin{center}
    $\big(
x,y,z\big) =\big( y,z,x\big).$
\end{center}
\end{definition}

The next Lemma follows from  \cite[identity (5)]{AS}  and \cite[identity (10)]{V93}.

\begin{lemma}
Let $S$ be a semi-alternative algebra. Then $S$ satisfies the following
identites:%
\begin{equation}
J\big( a,b,c\big) \ =\ 3\big( a,b,c\big) -3\big(a,c,b\big) ,
\label{id1-semi}
\end{equation}%
\begin{equation}
\begin{array}{lllllllllll}
3\big( \left[ a,b\right] ,c,d\big) & =& 
-&\left[ a,\big( b,c,d\big) \right] &+&\left[ b,\big( a,c,d\big) \right] &+&\left[ c,\big( a,b,d\big) \right] \\
&&-&
\left[ c,\big( b,a,d\big) \right] &-&\left[ d,\big( a,b,c\big) \right] &+&
\left[ d,\big( b,a,c\big) \right],
\end{array}\label{id2-semi}
\end{equation}%
where the jacobian $J\big( a,b,c\big) $ is taken with respect to the
commutator product $\left[\cdot,\cdot\right] $.
\end{lemma}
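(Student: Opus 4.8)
The plan is to derive both identities from two ingredients: the defining cyclic symmetry of the associator, $(x,y,z)=(y,z,x)$ (and hence $(x,y,z)=(z,x,y)$), together with the \emph{Teichm\"uller identity}
\[
(wx,y,z)-(w,xy,z)+(w,x,yz)=w(x,y,z)+(w,x,y)z,
\]
which holds in every (not necessarily associative) algebra and is checked by expanding both sides. A preliminary remark on normalisation: the displayed coefficients in \eqref{id1-semi} are those obtained with the unnormalised commutator $[x,y]=xy-yx$, since replacing it by $\tfrac12(xy-yx)$ would rescale the Jacobian by $\tfrac14$; identity \eqref{id2-semi}, by contrast, is insensitive to this choice because both of its sides rescale by the same factor.

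For \eqref{id1-semi} I would expand $J(a,b,c)=[[a,b],c]+[[b,c],a]+[[c,a],b]$ into its twelve fully parenthesised monomials and rewrite each right-nested product via $x(yz)=(xy)z-(x,y,z)$. The left-nested products (the six original ones together with the six produced by this rewriting) cancel completely, leaving the universal identity
\[
J(a,b,c)=(a,b,c)+(b,c,a)+(c,a,b)-(a,c,b)-(c,b,a)-(b,a,c),
\]
the signed sum of the six associators over $S_3$. Cyclic symmetry now collapses the three even terms to $3(a,b,c)$ and the three odd terms, which form the single cyclic orbit $(a,c,b)=(c,b,a)=(b,a,c)$, to $3(a,c,b)$, which is \eqref{id1-semi}.

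For \eqref{id2-semi} I would first exploit cyclic symmetry to write the left-hand side as a cyclic sum,
\[
3\big([a,b],c,d\big)=\big([a,b],c,d\big)+\big(c,d,[a,b]\big)+\big(d,[a,b],c\big),
\]
which is where the factor $3$ comes from. Each summand carries the compound entry $[a,b]=ab-ba$ in a different slot, so I would expand it by the Teichm\"uller identity (used with $w=a,\,x=b$ and with $w=b,\,x=a$, then subtracted). Each expansion yields the six standard associators $(b,c,d),(a,c,d),(a,b,d),(b,a,d),(a,b,c),(b,a,c)$ multiplied by a single generator on one side, together with compound associators such as $(a,b,cd)$, $(a,bc,d)$ or $(cd,a,b)$. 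Using cyclic symmetry to align these compounds across the three summands, for instance $(a,b,cd)=(cd,a,b)$ and $(a,bc,d)=(d,a,bc)$, they cancel out, and the surviving one-sided multiplications reassemble into the six commutators $[x,(\cdot,\cdot,\cdot)]$ on the right-hand side, with signs fixed by the antisymmetry of both sides under $a\leftrightarrow b$.

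The routine parts are the verification of the Teichm\"uller identity and the cancellation in \eqref{id1-semi}. The main obstacle is the bookkeeping in \eqref{id2-semi}: one must follow every compound associator through the threefold cyclic sum and confirm both that they cancel completely and that the remaining single-sided products occur with exactly the coefficients and signs listed. The $a\leftrightarrow b$ antisymmetry of both sides is a convenient invariant to check each intermediate expression against.
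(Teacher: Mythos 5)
The paper itself contains no proof of this lemma: it is justified by citation, namely identity (5) of \cite{AS} and identity (10) of \cite{V93}. So a self-contained verification like yours is a genuinely different route, and your proof of \eqref{id1-semi} is correct: the expansion of $J(a,b,c)$ into the signed sum of the six associators over $S_3$ is a universal identity, and cyclic symmetry collapses it to $3(a,b,c)-3(a,c,b)$. Your normalisation remark is also right — the displayed coefficients require the unnormalised commutator $[x,y]=xy-yx$, while \eqref{id2-semi} is homogeneous of degree one in the bracket on each side and so is insensitive to the choice.

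However, your proof of \eqref{id2-semi} has a genuine gap: the compound associators do \emph{not} cancel after cyclic alignment. Carrying out exactly your plan (cyclic sum on the left, Teichm\"uller with $(w,x)=(a,b)$ and $(w,x)=(b,a)$, then cyclic alignment of the compounds) yields
\begin{align*}
3\big([a,b],c,d\big) \ =\ & \ 2\Big[\big(a,bc,d\big)-\big(b,ac,d\big)-\big(a,b,cd\big)+\big(b,a,cd\big)+\big(da,b,c\big)-\big(db,a,c\big)\Big]\\
&+\big[a,(b,c,d)\big]-\big[b,(a,c,d)\big]+\big[c,(a,b,d)\big]-\big[c,(b,a,d)\big]-\big[d,(a,b,c)\big]+\big[d,(b,a,c)\big],
\end{align*}
so the surviving single-sided products come out with the \emph{wrong} signs on the first two commutators, and the block of compound associators is what must repair them: for \eqref{id2-semi} to hold, that block has to equal $-2[a,(b,c,d)]+2[b,(a,c,d)]$. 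Proving this residual identity is the actual content of \eqref{id2-semi}; it needs further applications of Teichm\"uller and semi-alternativity, not bookkeeping. The statement you would obtain if the block vanished (i.e.\ \eqref{id2-semi} with $+[a,(b,c,d)]-[b,(a,c,d)]$ in place of $-[a,(b,c,d)]+[b,(a,c,d)]$) is in fact false: in the semi-alternative algebra ${\mathbb S}_{01}$ of Theorem \ref{4-dim non-asso}, take $a=d=e_4$, $b=e_1$, $c=e_2$. Then $3([e_4,e_1],e_2,e_4)=-6(e_1,e_2,e_4)=12e_3$ and the right-hand side of \eqref{id2-semi} equals $12e_3$, whereas your version gives $4e_3$, because $[e_4,(e_1,e_2,e_4)]=-4e_3$ while $[e_1,(e_4,e_2,e_4)]=0$. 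Note finally that the sanity check you propose cannot detect this error: both the correct and the incorrect versions are antisymmetric under $a\leftrightarrow b$.
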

In \cite[Theorem 1]{AS}, it is proved that the commutator algebra of any semi-alternative algebra is a binary Lie algebra and in fact the following stronger result holds.
\begin{lemma}\label{commutator}
Let ${\rm S}$ be a semi-alternative algebra. Then $({\rm S}, [\cdot,\cdot])$ is a Malcev algebra\footnote{
The present observation gives an easy answer 
to problem 2.109 from the Dniester Notebook [2.109: An algebra is called assocyclic if it satisfies the identity
$(x, y, z) = (z, x, y)$ where $(x, y, z) = (xy)z - x(yz).$ It is easy to show that the
minus algebra of such an algebra is binary-Lie. Is every binary-Lie algebra over
a field of characteristic $\neq 2, 3$ embeddable into the minus algebra of a suitable
assocyclic algebra?], 
previously resolved by Shestakov and Arenas \cite{AS}.}
.
If ${\rm S}$ is anticommutative, then ${\rm S}$ is $3$-step nilpotent.
If ${\rm S}$ is commutative, then ${\rm S}$ is associative.
\end{lemma}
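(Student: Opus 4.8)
The plan is to prove the three assertions of Lemma~\ref{commutator} in sequence, using the two identities~\eqref{id1-semi} and~\eqref{id2-semi} already established, together with the standard characterization of Malcev algebras. Recall that an anticommutative algebra $({\rm S},[\cdot,\cdot])$ is a Malcev algebra precisely when it satisfies the Malcev identity, and that a convenient equivalent form states that the Jacobian $J(a,b,c)$ behaves as a derivation-like object; in characteristic $\neq 2,3$ one can rewrite the Malcev identity in terms of $J$ and the bracket. The first and main task is therefore to show that $({\rm S},[\cdot,\cdot])$ satisfies the Malcev identity.

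First I would establish that $({\rm S}, [\cdot,\cdot])$ is Malcev. The key observation is that~\eqref{id1-semi} expresses the Jacobian entirely through associators: $J(a,b,c) = 3(a,b,c) - 3(a,c,b)$. I would substitute this into the Malcev identity (written via $J$) and reduce everything to a statement purely about associators of the original product~$\cdot$. The associator relations available are the defining identity $(x,y,z)=(y,z,x)$ of semi-alternativity together with its consequence~\eqref{id2-semi}, which controls how associators interact with commutators. Concretely, I would expand $J([a,c],b,c)$ (or the appropriate instance needed for the Malcev identity) using~\eqref{id1-semi}, then apply~\eqref{id2-semi} with the slot $[a,c]$ to rewrite the associators containing a commutator as a sum of bracketed associators, and finally collapse the resulting expression using the cyclic symmetry $(x,y,z)=(y,z,x)$. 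The arithmetic should cancel, yielding the Malcev identity; the factor~$3$ appearing throughout is the reason the characteristic must avoid~$3$. I expect this verification to be the principal obstacle, since it requires carefully tracking the six bracketed associators on the right of~\eqref{id2-semi} and matching them against the terms the Malcev identity demands.

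For the second assertion, suppose ${\rm S}$ is anticommutative, so $x\cdot y = [x,y]$ and $x\cdot y = -\,y\cdot x$. Then every product is a commutator, and I would feed anticommutativity into the defining identity $(x,y,z)=(y,z,x)$ together with~\eqref{id2-semi}. Under anticommutativity the associator $(x,y,z)=(x\cdot y)\cdot z - x\cdot(y\cdot z)$ simplifies, and combining the cyclic associator symmetry with the skew-symmetry of the product forces strong vanishing conditions on triple products. The goal is to deduce that all products of four elements vanish, i.e.\ ${\rm S}$ is $3$-step nilpotent; I would argue that the associator identities force ${\rm S}\cdot{\rm S}$ to lie in the annihilator after one more multiplication, so that $({\rm S}^2){\rm S}\cdot{\rm S}=0$.

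Finally, for the third assertion, suppose ${\rm S}$ is commutative, so $x\cdot y = y\cdot x$ and the commutator bracket is identically zero. Then semi-alternativity $(x,y,z)=(y,z,x)$ combined with commutativity gives $(x,y,z)=(y,z,x)=(z,x,y)$, so the associator is totally symmetric in a commutative setting. I would then show that a commutative algebra whose associator is invariant under the cyclic (hence full symmetric) permutation of its three arguments must in fact be associative: commutativity already gives $(x,y,z)=z\cdot(x\cdot y)-x\cdot(y\cdot z)$ up to sign rearrangements, and matching this against the cyclic symmetry forces $(x,y,z)=0$. This mirrors the second bullet of the earlier right-alternative lemma (a commutative right alternative algebra is associative) and is the most routine of the three parts.
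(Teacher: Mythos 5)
Your plan for the main assertion (the Malcev property) is the same as the paper's, but it stops exactly where the proof begins: you write that ``the arithmetic should cancel'' and flag the cancellation as the principal obstacle, yet that cancellation \emph{is} the proof. For the record, the paper executes it as follows: apply \eqref{id2-semi} to the two instances $3\big([x,z],y,x\big)$ and $3\big([x,z],x,y\big)$ and subtract; semi-alternativity gives $(x,x,y)=(x,y,x)=(y,x,x)$ and $(x,z,x)=(z,x,x)$, so all terms of the form $[y,\cdot]$ and $[z,\cdot]$ cancel, while $(z,y,x)=(x,z,y)$ and $(z,x,y)=(x,y,z)$ collapse the surviving $[x,\cdot]$ terms to $\big[x,3(x,y,z)-3(x,z,y)\big]$; finally \eqref{id1-semi} identifies the left side as $J\big([x,z],y,x\big)$ and the right side as $\big[x,J(x,y,z)\big]$, which is the Malcev identity. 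So your part~1 is a correct outline with the decisive computation missing, not yet a proof.

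The other two parts contain genuine errors. In the commutative case you claim that cyclic invariance of the associator makes it ``totally symmetric''; this is false, and in fact commutativity gives the opposite: $(z,y,x)=(zy)x-z(yx)=x(yz)-(xy)z=-(x,y,z)$, so transpositions act by $-1$ and the associator transforms by the \emph{sign} representation. Total symmetry together with this antisymmetry would force vanishing, but asserting total symmetry is assuming precisely what you want to prove. The working argument (the paper's) is elementary: commutativity turns $(x,y,z)=(y,z,x)$ into the $\tfrac12$-Leibniz law $2x(yz)=(xy)z+y(xz)$; swapping $x$ and $y$ gives $2y(xz)=(xy)z+x(yz)$, and eliminating $y(xz)$ between the two yields $3x(yz)=3(xy)z$, hence associativity over $\mathbb{C}$. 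In the anticommutative case, your assertion that the identities ``force strong vanishing conditions on triple products'' aims at the wrong target: triple products need not vanish (the conclusion is $3$-step, not $2$-step, nilpotency), and your sketch never identifies the key intermediate fact, namely antiassociativity. From $(x,y,z)=(y,z,x)$ and skewness one gets $(xy)z-x(yz)=(yz)x-y(zx)=-x(yz)-y(zx)$, i.e.\ $(xy)z=-y(zx)$; applying this identity to the cyclic shift $(z,x,y)$ and using skewness once more gives $(xy)z=-x(yz)$. With antiassociativity in hand, any product of four elements can be reassociated to its own negative (e.g.\ $((ab)c)d=-(ab)(cd)=a(b(cd))$ while also $((ab)c)d=-(a(bc))d=a((bc)d)=-a(b(cd))$), so all fourth powers vanish. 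Without these two mechanisms your parts~2 and~3 do not go through as written.
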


\begin{proof}
Let $x,y,z\in {\rm S}$. Then, from (\ref{id2-semi}), we have
\begin{longtable}{lclclclclcl}
$3\big( \left[ x,z\right] ,y,x\big)  $&$=$&
$-\left[ x,\big( z,y,x\big) \right] $&$+$&$\left[ z,\big( x,y,x\big) \right] $&$+$&$\left[ y,\big( x,z,x\big) %
\right] $\\ &&&$-$&$\left[ y,\big( z,x,x\big) \right] $&$-$&$\left[ x,\big( x,z,y\big) %
\right] $&$+$&$\left[ x,\big( z,x,y\big) \right] ,$ \\
$3\big( \left[ x,z\right] ,x,y\big)  $&$=$&$-\left[ x,\big( z,x,y\big) %
\right] $&$+$&$\left[ z,\big( x,x,y\big) \right] $&$+$&$\left[ x,\big( x,z,y\big) %
\right] $\\ &&&$-$&$\left[ x,\big( z,x,y\big) \right] $&$-$&$\left[ y,\big( x,z,x\big) %
\right] $&$+$&$\left[ y,\big( z,x,x\big) \right] .$
\end{longtable}%
Since ${\rm S}$  is semi-alternative, we get%
\begin{center}
$3\big( \left[ x,z\right] ,y,x\big) -3\big( \left[ x,z\right] ,x,y\big)
\ =\ \left[ x,3\big( x,y,z\big) -3\big( x,z,y\big) \right].$
\end{center}
Then, from (\ref{id1-semi}), we have 
\begin{center}
$J\big( \left[ x,z\right] ,y,x\big) =\left[ x,J\big( x,y,z\big) \right],$
i.e.  $({\rm S}, [\cdot,\cdot])$ is a Malcev algebra.
\end{center}

The second statement follows from the observation that each anticommutative semi-alternative algebra is 
antiassociative (i.e. $(xy)z+x(yz)=0$), hence $3$-step nilpotent.

The third   statement follows from the observation that each commutative semi-alternative algebra is 
$\frac{1}{2}$-Leibniz (i.e. $x(yz)=\frac{1}{2}\big( (xy)z+y(xz)\big)$), hence associative.

\end{proof}

\begin{theorem}
\label{3-dim lie}Let $\big( \mathcal{L},\left[\cdot,\cdot\right] \big) $ be a
nontrivial complex $3$-dimensional Malcev algebra. Then $\mathcal{L}$ is
isomorphic to one of the following Lie algebras:

\begin{longtable}{lclll}
$\mathcal{L}_{01}$ & $:$ & $[e_{1}, e_{2}] = e_{3}$ \\ 
$\mathcal{L}_{02}$ & $:$ & $[e_{1}, e_{2}] = e_{2}$ & $[e_{1}, e_{3}] = e_{2} + e_{3}$ \\ 
$\mathcal{L}_{03}^{\alpha}$ & $:$ & $[e_{1}, e_{2}] = e_{2}$ & $[e_{1}, e_{3}] = \alpha e_{3}$ \\ 
$\mathcal{L}_{04}$ & $:$ & $[e_{1}, e_{2}] = e_{3}$ & $[e_{1}, e_{3}] = -2e_{1}$ & $[e_{2}, e_{3}] = 2e_{2}$ 
\end{longtable}
\end{theorem}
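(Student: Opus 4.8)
The plan is to recognise that the real content of the statement is the assertion that \emph{every} nontrivial $3$-dimensional Malcev algebra is already a Lie algebra, after which the theorem reduces to the classical classification of $3$-dimensional complex Lie algebras. So the first step is to prove that the Jacobian $J(x,y,z):=[[x,y],z]+[[y,z],x]+[[z,x],y]$ vanishes identically. By anticommutativity $J$ is totally skew-symmetric, so as a trilinear map $\mathcal{L}\times\mathcal{L}\times\mathcal{L}\to\mathcal{L}$ on the $3$-dimensional space $\mathcal{L}$ it is completely determined by the single vector $w:=J(e_1,e_2,e_3)$; concretely $J(x,y,z)=\omega(x,y,z)\,w$, where $\omega$ is the determinant form in the fixed basis. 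Thus it suffices to prove $w=0$.

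Second, I would feed the Malcev identity, taken in Sagle's form $J(x,y,[x,z])=[J(x,y,z),x]$, into this rank-one description. Substituting $J(x,y,z)=\omega(x,y,z)\,w$ on both sides yields the scalar relation $\omega(x,y,[x,z])\,w=-\omega(x,y,z)\,[x,w]$ for all $x,y,z$. Choosing $x,y$ linearly independent (so that $\omega(x,y,\cdot)\neq0$) forces $[x,w]$ to be a scalar multiple of $w$; letting $x$ range over a basis gives $[x,w]=\lambda(x)\,w$ for a linear functional $\lambda$, so that $\mathbb{C}w$ is a one-dimensional ideal. Feeding this back, the relation becomes $\omega(x,y,[x,z])=-\lambda(x)\,\omega(x,y,z)$, and specialising $x,y,z$ to basis vectors pins down the surviving structure constants (for instance the $e_3$-component of $[e_1,e_2]$ must vanish once $w\neq0$, and likewise for the cyclic analogues). \textbf{This bookkeeping is the main obstacle}: one has to verify that the resulting system of scalar conditions is inconsistent unless $w=0$. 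A clean way to close it is to pass to the $2$-dimensional quotient $\mathcal{L}/\mathbb{C}w$, which is automatically Lie (every skew trilinear map on a $2$-dimensional space vanishes), and to exploit $w\in J(\mathcal{L},\mathcal{L},\mathcal{L})\subseteq\mathcal{L}^2$ together with $[w,w]=0$; alternatively one may simply invoke Kuzmin's theorem that every Malcev algebra of dimension $\le3$ is a Lie algebra.

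Finally, once $J\equiv0$ is established, $\mathcal{L}$ is a nontrivial $3$-dimensional complex Lie algebra, and I would invoke the classical Bianchi/de~Graaf classification to produce exactly the algebras $\mathcal{L}_{01},\ \mathcal{L}_{02},\ \mathcal{L}_{03}^{\alpha}$ and $\mathcal{L}_{04}$. Pairwise non-isomorphism (and the internal identifications within the $\mathcal{L}_{03}^{\alpha}$-family) follows from standard invariants: the dimension of the derived subalgebra $[\mathcal{L},\mathcal{L}]$ separates the nilpotent $\mathcal{L}_{01}$, the solvable non-nilpotent cases, and the simple algebra $\mathcal{L}_{04}\cong\mathfrak{sl}_2$, while within the solvable family the Jordan type and the eigenvalues of $\mathrm{ad}(e_1)$ distinguish $\mathcal{L}_{02}$ from the members of $\mathcal{L}_{03}^{\alpha}$.
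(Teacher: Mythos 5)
You should first be aware that the paper contains no proof of this statement at all: Theorem~\ref{3-dim lie} is quoted as a known fact (every complex Malcev algebra of dimension at most $3$ is a Lie algebra, combined with the classical classification of $3$-dimensional complex Lie algebras), and it is used later only through its consequences. So the second of your two suggested completions --- invoke Kuzmin's theorem and then the classical list together with the standard separating invariants --- is in substance exactly what the authors do, and as a citation-level argument your proposal is acceptable; but then the entire computation with $\omega$ and $w$ that occupies most of your proposal becomes redundant.

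Judged as the self-contained proof it sets out to be, however, the proposal has a genuine gap, and it sits exactly where you flag it. After the (correct) reduction $J(x,y,z)=\omega(x,y,z)\,w$, the deduction $[x,w]=\lambda(x)w$, and the relation $\omega(x,y,[x,z])=-\lambda(x)\,\omega(x,y,z)$, you never show that these conditions are inconsistent with $w\neq0$. The ``clean way'' you offer does not close it: the statement that $\mathcal{L}/\mathbb{C}w$ is a Lie algebra is equivalent to $J(\mathcal{L},\mathcal{L},\mathcal{L})\subseteq\mathbb{C}w$, which is precisely the rank-one description you started from, so it adds nothing; and no argument is given for how $w\in\mathcal{L}^2$ and $[w,w]=0$ would finish the job. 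The missing step is short and uses only what you already have. Suppose $w\neq0$; completing $w$ to a basis and renormalising, we may assume $e_3=w$ (this rescales $\omega$ and $w$ but preserves the rank-one description and $\lambda$). Taking $x=e_1$, $y=e_2$, $z=e_3$ in your relation and using $[e_1,e_3]=\lambda(e_1)e_3$ gives $\lambda(e_1)=-\lambda(e_1)$, so $\lambda(e_1)=0$; the same computation with $x=e_2$ gives $\lambda(e_2)=0$, and $\lambda(e_3)=0$ follows from $[w,w]=0$. Hence $\lambda=0$, and the relation collapses to $\omega(x,y,[x,z])=0$ for all $x,y,z$. For fixed $x\neq0$ and $z$ this forces $[x,z]\in\mathbb{C}x$ (otherwise complete $x,[x,z]$ to a basis by a suitable $y$ and get $\omega(x,y,[x,z])\neq0$); by anticommutativity also $[x,z]\in\mathbb{C}z$, so $[x,z]=0$ whenever $x$ and $z$ are independent, i.e.\ $\mathcal{L}$ is abelian --- contradicting $w=J(e_1,e_2,e_3)\neq0$. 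Inserting this argument makes your Lie-reduction complete, after which the appeal to the classical classification of $3$-dimensional complex Lie algebras and to the invariants separating $\mathcal{L}_{01}$, $\mathcal{L}_{02}$, $\mathcal{L}_{03}^{\alpha}$ and $\mathcal{L}_{04}$ is unobjectionable.
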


\subsubsection{The classification Theorem A2}

\begin{theoremA2}
Let ${\rm S}$ be a complex $3$-dimensional semi-alternative algebra. Then $%
{\rm S}$\ is isomorphic to one of associative algebras listed in Theorem \ref{3-dim assoc} or to one of the following algebras\footnote{For receiving similar multiplication tables we have to apply the basis change $e_1:=-e_1$ in algebras ${\rm S}_{05},$\ ${\rm S}_{06},$\ ${\rm S}_{07},$ and ${\rm S}_{13}$.}:
\begin{longtable}{llllllllll}
 ${\rm S}_{01}^{\alpha }$ & $:$ & $e_{1} e_{1} = e_{2}$ & $e_{1} e_{2} = \big( \alpha +1\big) e_{3}$ & $e_{2} e_{1} = \big( \alpha -1\big) e_{3}$ & & & \\ 
${\rm S}_{02}$ & $:$ & $e_{1} e_{1} = 2e_{1} + e_{2}$ & $e_{1} e_{2} = 2e_{2}$ & $e_{3} e_{1} = 2e_{3}$ & & & \\ 
${\rm S}_{03}$ & $:$ & $e_{1} e_{1} = 2e_{1} + e_{2} + e_{3}$ & $e_{1} e_{2} = 2e_{2}$ & $e_{3} e_{1} = 2e_{3}$ & & & \\ 
${\rm S}_{04}$ & $:$ & $e_{1} e_{1} = 2e_{1} + e_{3}$ & $e_{1} e_{2} = 2e_{2}$ & $e_{3} e_{1} = 2e_{3}$ & & & \\ 

{${\rm S}_{05}$} & $:$ & $e_{1} e_{1} = 2e_{1} + e_{2}$ & $e_{2} e_{1} = 2e_{2}$ & $e_{1} e_{3} = 2e_{3}$ & $e_{3} e_{1} = 2e_{3}$ & & \\ 


{${\rm S}_{06}$} & $:$ & $e_{1} e_{1} = 2e_{1} + e_{2}$ & $e_{2} e_{1} = 2e_{2}$ & $e_{3} e_{3} = e_{3}$ & & & \\ 


{${\rm S}_{07}$} & $:$ & $e_{1} e_{1} = 2e_{1} + e_{2}$ & $e_{2} e_{1} = 2e_{2}$ & & & & \\

${\rm S}_{08}$ & $:$ & $e_{1} e_{1} = 2e_{1} + e_{2}$ & $e_{1} e_{2} = 2e_{2}$ & $e_{1} e_{3} = 2e_{3}$ & $e_{3} e_{1} = 2e_{3}$ & & \\ 
${\rm S}_{09}$ & $:$ & $e_{1} e_{1} = 2e_{1} + e_{2}$ & $e_{1} e_{2} = 2e_{2}$ & $e_{3} e_{3} = e_{3}$ & & & \\ 
${\rm S}_{10}$ & $:$ & $e_{1} e_{1} = 2e_{1} + e_{2}$ & $e_{1} e_{2} = 2e_{2}$ & & & & \\ 
${\rm S}_{11}$ & $:$ & $e_{1} e_{1} = e_{2} + e_{3}$ & $e_{1} e_{2} = e_{2}$ & $e_{2} e_{1} = -e_{2}$ & $e_{1} e_{3} = e_{1}$ & \\ 
& & $e_{3} e_{1} = e_{1}$ &  $e_{2} e_{3} = e_{2}$ & $e_{3} e_{2} = e_{2}$ & $e_{3} e_{3} = e_{3}$ & & & \\ 
${\rm S}_{12}$ & $:$ & $e_{1} e_{1} = 2e_{1} + e_{2}$ & $e_{1} e_{2} = 2e_{2}$ & $e_{1} e_{3} = 2e_{3}$ & & & \\ 
{${\rm S}_{13}$} & $:$ & $e_{1} e_{1} = 2e_{1} + e_{2}$ & $e_{2} e_{1} = 2e_{2}$ & $e_{3} e_{1} = 2e_{3}$ & & & \\

\end{longtable}
\end{theoremA2}
\subsubsection{Proof of Theorem A2}    

Let ${\rm S}$\ be a complex $3$-dimensional semi-alternative algebra.
Suppose that ${\rm S}$ is nonassociative. By Lemma \ref{commutator}
and Theorem \ref{3-dim lie}, we may assume ${\rm S}^{-}\in \{\mathcal{L}%
_{01},\mathcal{L}_{02},\mathcal{L}_{03}^{\alpha },\mathcal{L}_{04}\}$. Then $%
{\rm Z}^{2}\big( {\rm S}^{-},{\rm S}^{-}\big) \neq \varnothing $ if
and only if ${\rm S}^{-}\in \{\mathcal{L}_{01},\mathcal{L}_{03}^{-1},\mathcal{L}_{03}^{0},\mathcal{L}_{03}^{1}\}$. So we
study the following cases:

\begin{enumerate}[I.]
    \item 
\underline{${\rm S}^{-}=\mathcal{L}_{01}$.} Let $\theta \ =\ \big(
B_{1},B_{2},B_{3}\big) \neq 0$ be an arbitrary element of  $%
{\rm Z}^{2}\big( \mathcal{L}%
_{01},\mathcal{L}_{01}\big) $. Then $\theta \in \big\{ \eta _{1}, \ \ldots
, \ \eta _{4}\big\} $ where%
\begin{longtable}{lcll}
$\eta _{1} $&$=$&$\big( 0,\ 0,\ \alpha _{1}\Delta _{11}+\alpha
_{2}\Delta _{22}+\alpha _{3}\Delta _{12}\big) ,$ \\
$\eta _{2} $&$=$&$\big( 0,\ \alpha _{1}\Delta _{11}, \ \alpha
_{2}\Delta _{11}+\alpha _{3}\Delta _{12}\big) ,$ \\
$\eta _{3} $&$=$&$\big( \alpha _{1}\Delta _{22}, \ 0,\ \alpha
_{2}\Delta _{22}+\alpha _{3}\Delta _{12}\big) ,$ \\
$\eta _{4} $&$=$&$\big( -\alpha _{2}\Delta _{11}-\frac{\alpha _{1}^{2}}{
\alpha _{2}}\Delta _{22}+\alpha _{1}\Delta _{12},\ -\frac{%
\alpha _{2}^{2}}{\alpha _{1}}\Delta _{11}-\alpha _{1}\Delta
_{22}+\alpha _{2}\Delta _{12},$\\
\multicolumn{3}{r}{$\alpha _{4}\Delta _{11}-\frac{%
\alpha _{1}^{2}\alpha _{4}+2\alpha _{1}\alpha _{2}\alpha _{3}}{\alpha
_{2}^{2}}\Delta _{22}+\alpha _{3}\Delta _{12}$}&$\big)_{\alpha
_{1}\alpha _{2}\neq 0},$ 
\end{longtable}%
for some $\alpha _{1},\alpha _{2},\alpha _{3},\alpha _{4}\in \mathbb{C}$.
The automorphism group of $\mathcal{L}_{01}$, ${\rm Aut}\big( \mathcal{L}%
_{01}\big) $, consists of the invertible matrices of the following form:%
\begin{equation*}
\varphi =%
\begin{pmatrix}
a_{11} & a_{12} & 0 \\ 
a_{21} & a_{22} & 0 \\ 
a_{31} & a_{32} & a_{11}a_{22}-a_{12}a_{21}%
\end{pmatrix}%
.
\end{equation*}

\begin{itemize}
\item $\theta =\eta _{1}$. Then $\big( {\rm S},\cdot _{\theta }\big) $
is associative.

\item $\theta =\eta _{2}$. Then $\big( {\rm S},\cdot _{\theta }\big) $
is associative if and only if $\alpha _{1}=0$. So we assume $\alpha _{1}\neq
0$. Let $\varphi $ be the following automorphism: 
\begin{equation*}
\varphi =%
\begin{pmatrix}
1 & 0 & 0 \\ 
0 & \alpha _{1} & 0 \\ 
0 & \alpha _{2} & \alpha _{1}%
\end{pmatrix}%
.
\end{equation*}%
Then $\theta \ast \varphi =\big( 0, \ \Delta _{11},\ \alpha_{3}\Delta _{12}\big) $. Hence we get the algebras ${\rm S}%
_{01}^{\alpha }$.$\allowbreak $ Furthermore, the algebras ${\rm S}%
_{01}^{\alpha }$ and ${\rm S}_{01}^{\beta }$ are isomorphic if and only
if $\alpha =\beta $.

\item $\theta =\eta _{3}$. Then $\big( {\rm S},\cdot _{\theta }\big) $
is associative if and only if $\alpha _{1}=0$. So we assume $\alpha _{1}\neq
0$. Let $\varphi $ be the following automorphism: 
\begin{equation*}
\varphi =%
\begin{pmatrix}
0 & \alpha _{1} & 0 \\ 
1 & 0 & 0 \\ 
0 & \alpha _{2} & \alpha _{1}%
\end{pmatrix}%
.
\end{equation*}%
Then $\theta \ast \varphi =\big( 0,\ \Delta _{11},\ \alpha_{3}\Delta _{12}\big) $. So we get the algebras ${\rm S}%
_{01}^{\alpha }$.

\item $\theta =\eta _{4}$. Then $\big( {\rm S},\cdot _{\theta }\big) $
is nonassociative. Let $\varphi $ be the the following automorphism:%
\begin{equation*}
\varphi =%
\begin{pmatrix}
\frac{{\bf i}\sqrt{\alpha _{1}}}{\alpha _{2}} & \frac{\alpha
_{1}}{\alpha _{2}} & 0 \\ 
0 & 1 & 0 \\ 
0 & -\frac{\alpha _{1}\alpha _{4}}{\alpha _{2}^{2}} & \frac{{\bf i}\sqrt{\alpha _{1}}}{\alpha _{2}}
\end{pmatrix}%
.
\end{equation*}%
Then $\theta \ast \varphi =\big( 0,\ \Delta _{11}, \ \frac{\alpha
_{1}\alpha _{4}+\alpha _{2}\alpha _{3}}{\alpha _{2}}\Delta
_{12}\big) $. So we get the algebras ${\rm S}_{01}^{\alpha }$.
\end{itemize}

\item 
\underline{${\rm S}^{-}=\mathcal{L}_{03}^{-1}$.} Let $\theta \ =\ \big(
B_{1},B_{2},B_{3}\big) \neq 0$ be an arbitrary element of  ${\rm Z}^{2}\big( \mathcal{L}_{03}^{-1},\mathcal{L}_{03}^{-1}\big) $. Then $\theta \in \big\{ \eta _{1}, \ \eta _{2}\big\} $ where%
\begin{longtable}{lcl}
$\eta _{1} $&$=$&$\big( 2\Delta _{11}, \ \alpha _{1}\Delta_{11}+\Delta _{12}, \ \alpha _{2}\Delta _{11}+\Delta
_{13}\big) ,$ \\
$\eta _{2} $&$=$&$\big( -2\Delta _{11},\ \alpha _{1}\Delta_{11}-\Delta _{12}, \ \alpha _{2}\Delta _{11}-\Delta
_{13}\big) ,$
\end{longtable}%
for some $\alpha _{1},\alpha _{2}\in \mathbb{C}$. The automorphism group of $%
\mathcal{L}_{03}^{-1}$, ${\rm Aut}\big( \mathcal{L}_{03}^{-1}\big) $, consists of the invertible matrices of the following form:%
\begin{equation*}
\begin{pmatrix}
1 & 0 & 0 \\ 
a_{21} & a_{22} & 0 \\ 
a_{31} & 0 & a_{33}%
\end{pmatrix}%
, \ 
\begin{pmatrix}
-1 & 0 & 0 \\ 
a_{21} & 0 & a_{23} \\ 
a_{31} & a_{32} & 0%
\end{pmatrix}%
.
\end{equation*}

\begin{itemize}
\item $\bigskip \theta =\eta _{1}$. Then $\big( {\rm S},\cdot _{\theta
}\big) $ is associative if and only if $\alpha _{1}=\alpha _{2}=0$. So we
assume $\big( \alpha _{1},\alpha _{2}\big) \neq \big( 0,0\big) $.
Suppose first that $\alpha _{1}\neq 0$. Let $\varphi $ be the first of the
following matrices if $\alpha _{2}=0$ or the second if $\alpha _{2}\neq 0$:%
\begin{equation*}
\begin{pmatrix}
1 & 0 & 0 \\ 
0 & \alpha _{1} & 0 \\ 
0 & 0 & 1%
\end{pmatrix}%
, \
\begin{pmatrix}
1 & 0 & 0 \\ 
0 & \alpha _{1} & 0 \\ 
0 & 0 & \alpha _{2}%
\end{pmatrix}%
.
\end{equation*}%
Then \begin{center}$\theta \ast \varphi \in 
\big\{\big( 2\Delta _{11},\ \Delta_{11}+\Delta _{12},\ \Delta _{13}\big) ,\ 
\big(2\Delta _{11},\ \Delta _{11}+\Delta_{12},\ \Delta _{11}+\Delta _{13}\big) \big\}$.
\end{center} So we get the
algebras ${\rm S}_{02}$ and ${\rm S}_{03}$. Assume now $\alpha
_{1}=0 $. Let $\varphi $ be the the following automorphism:%
\begin{equation*}
\begin{pmatrix}
1 & 0 & 0 \\ 
0 & 1 & 0 \\ 
0 & 0 & \alpha _{2}%
\end{pmatrix}%
.
\end{equation*}%
Then $\theta \ast \varphi =\big( 2\Delta _{11},\ \Delta_{12}, \ \Delta _{11}+\Delta _{13}\big) $. Hence we get the
algebra ${\rm S}_{04}$.

\item $\theta =\eta _{2}$. Then $\big( {\rm S},\cdot _{\theta }\big) $
is associative if and only if $\alpha _{1}=\alpha _{2}=0$. So we assume $%
\big( \alpha _{1},\alpha _{2}\big) \neq \big( 0,0\big) $. Suppose
first that $\alpha _{1}\neq 0$. Let $\varphi $ be the first of the following
matrices if $\alpha _{2}=0$ or the second if $\alpha _{2}\neq 0$:%
\begin{equation*}
\begin{pmatrix}
-1 & 0 & 0 \\ 
0 & 0 & \alpha _{1} \\ 
0 & 1 & 0%
\end{pmatrix}%
, \ 
\begin{pmatrix}
-1 & 0 & 0 \\ 
0 & 0 & \alpha _{1} \\ 
0 & \alpha _{2} & 0%
\end{pmatrix}%
\end{equation*}%
Then \begin{center} $\theta \ast \varphi \in \big\{\big( 2\Delta _{11}, \ \Delta_{12},\ \Delta _{11}+\Delta _{13}\big) ,\ \big(
2\Delta _{11},\ \Delta _{11}+\Delta_{12},\ \Delta _{11}+\Delta _{13}\big) \big\}$.\end{center} So we get the
algebras ${\rm S}_{04}$ and ${\rm S}_{03}$. Assume now that $\alpha
_{1}=0$. Let $\varphi $ be the the following automorphism:%
\begin{equation*}
\begin{pmatrix}
-1 & 0 & 0 \\ 
0 & 0 & 1 \\ 
0 & \alpha _{2} & 0%
\end{pmatrix}%
.
\end{equation*}%
Then $\theta \ast \varphi =\big( 2\Delta _{11},\ \Delta_{11}+\Delta _{12},\ \Delta _{13}\big) $. Hence we get the
algebra ${\rm S}_{02}$.
\end{itemize}

\item 
\underline{${\rm S}^{-}=\mathcal{L}_{03}^{0}$.} Let $\theta \ =\ \big(
B_{1},B_{2},B_{3}\big) \neq 0$ be an arbitrary element of   $ 
{\rm Z}^{2}\big( \mathcal{L}_{03}^{0},\mathcal{L}_{03}^{0}\big) $. Then $\theta \in \big\{ \eta _{1},\ \ldots ,\ \eta_{7}\big\} $
where%
\begin{eqnarray*}
\eta _{1} &=&\big( -2\Delta _{11},\ \alpha _{1}\Delta_{11}-\Delta _{12},\ -2\Delta _{13}+\alpha _{2}\Delta_{33}\big) , \\
\eta _{2} &=&\big( -2\Delta _{11}, \ \alpha _{1}\Delta_{11}-\Delta _{12},\ \alpha _{2}\Delta _{33}\big) , \\
\eta _{3} &=&\big( -2\Delta _{11},\ \alpha _{1}\Delta_{11}-\Delta _{12},\ \alpha _{2}\Delta _{11}+\alpha_{3}\Delta _{13}+\frac{\alpha _{3}\big( 2+\alpha _{3}\big) }{%
\alpha _{2}}\Delta _{33}\big) , \\
\eta _{4} &=&\big( 2\Delta _{11},\ \alpha _{1}\Delta_{11}+\Delta _{12},\ 2\Delta _{13}+\alpha _{2}\Delta_{33}\big) , \\
\eta _{5} &=&\big( 2\Delta _{11},\ \alpha _{1}\Delta
_{11}+\Delta _{12},\ \alpha _{2}\Delta _{33}\big) , \\
\eta _{6} &=&\big( 2\Delta _{11},\ \alpha _{1}\Delta
_{11}+\Delta _{12},\ \alpha _{2}\Delta _{11}+\alpha
_{3}\Delta _{13}+\frac{\alpha _{3}\big( \alpha _{3}-2\big) }{%
\alpha _{2}}\Delta _{33}\big) , \\
\eta _{7} &=&\big( 2\alpha _{1}\Delta _{11}+\alpha_{2}\Delta _{13},\ \alpha _{3}\Delta_{11}+\alpha_{1}\Delta _{12}+\alpha _{2}\Delta _{23},\ \frac{1-\alpha
_{1}^{2}}{\alpha _{2}}\Delta _{11}+\alpha _{2}\Delta_{33}\big) _{\alpha _{2}\neq 0},
\end{eqnarray*}%
for some $\alpha _{1},\alpha _{2},\alpha _{3}\in \mathbb{C}$.
The automorphism group of $\mathcal{L}_{03}^{0}$, ${\rm Aut}%
\big( \mathcal{L}_{03}^{0}\big) $, consists of the invertible
matrices of the following form:%
\begin{equation*}
\varphi =%
\begin{pmatrix}
1 & 0 & 0 \\ 
a_{21} & a_{22} & 0 \\ 
a_{31} & 0 & a_{33}%
\end{pmatrix}%
.
\end{equation*}

\begin{itemize}
\item $\theta =\eta _{1}$. Then $\big( {\rm S},\cdot _{\theta }\big) $
is associative if and only if $\alpha _{1}=0$. So we assume $\alpha _{1}\neq
0$. Let $\varphi $ be the first of the following matrices if $\alpha _{2}=0$ or
the second if $\alpha _{2}\neq 0$.%
\begin{equation*}
\begin{pmatrix}
1 & 0 & 0 \\ 
0 & \alpha _{1} & 0 \\ 
0 & 0 & 1%
\end{pmatrix}%
, \ 
\begin{pmatrix}
1 & 0 & 0 \\ 
0 & \alpha _{1} & 0 \\ 
\frac{2}{\alpha _{2}} & 0 & \frac{1}{\alpha _{2}}%
\end{pmatrix}%
\end{equation*}%
Then 
\begin{center}$\theta \ast \varphi \in \big\{\big( -2\Delta _{11},\ 
\Delta_{11}-\Delta _{12}, \ -2\Delta _{13}\big),\ \big(
-2\Delta _{11},\ \Delta _{11}-\Delta
_{12},\ \Delta _{33}\big) \big\}$.
\end{center} So we get the algebras ${\rm S}%
_{05}$ and ${\rm S}_{06}$.

\item $\theta =\eta _{2}$. Then $\big( {\rm S},\cdot _{\theta }\big) $
is associative if and only if $\alpha _{1}=0$. So we assume $\alpha _{1}\neq
0$. Let $\varphi $ be the first of the following matrices if $\alpha _{2}\neq 0$
or the second if $\alpha _{2}=0$.%
\begin{equation*}
\begin{pmatrix}
1 & 0 & 0 \\ 
0 & \alpha _{1} & 0 \\ 
0 & 0 & \frac{1}{\alpha _{2}}%
\end{pmatrix}%
, \
\begin{pmatrix}
1 & 0 & 0 \\ 
0 & \alpha _{1} & 0 \\ 
0 & 0 & 1%
\end{pmatrix}%
.
\end{equation*}%
Then 
\begin{center}$\theta \ast \varphi \in \big\{\big( -2\Delta _{11},\ \Delta_{11}-\Delta _{12},\ \Delta _{33}\big) ,\ \big(
-2\Delta _{11},\ \Delta _{11}-\Delta _{12},\ 0\big) \big\}$%
.\end{center} Hence we get the algebras ${\rm S}_{06}$ and ${\rm S}_{07}$.

\item $\theta =\eta _{3}$. Then $\big( {\rm S},\cdot _{\theta }\big) $
is associative if and only if $\alpha _{1}=0$. So we assume $\alpha _{1}\neq
0$. If $\alpha _{3}\big( \alpha _{3}+2\big) \neq 0$, we choose $\varphi $ to
be the following automorphism:%
\begin{equation*}
\varphi =%
\begin{pmatrix}
1 & 0 & 0 \\ 
0 & \alpha _{1} & 0 \\ 
-\frac{\alpha _{2}}{\alpha _{3}+2} & 0 & \frac{\alpha _{2}}{\alpha
_{3}\big( \alpha _{3}+2\big) }%
\end{pmatrix}%
.
\end{equation*}%
Then $\theta \ast \varphi =\big( -2\Delta _{11},\ \Delta_{11}-\Delta _{12},\ \Delta _{33}\big) $. Therefore we
obtain the algebra ${\rm S}_{06}$. If $\alpha _{3}=0$, we choose $\varphi $
to be the following automorphism:%
\begin{equation*}
\varphi =%
\begin{pmatrix}
1 & 0 & 0 \\ 
0 & \alpha _{1} & 0 \\ 
-\frac{\alpha _{2}}{2} & 0 & 1%
\end{pmatrix}%
.
\end{equation*}%
Then $\theta \ast \varphi =\big( -2\Delta _{11},\ \Delta_{11}-\Delta _{12},\ 0\big) $. So we get the algebra ${\rm S}%
_{07}$. If $\alpha _{3}=-2$, we choose $\varphi $ to be the following
automorphism:%
\begin{equation*}
\varphi =%
\begin{pmatrix}
1 & 0 & 0 \\ 
0 & \alpha _{1} & 0 \\ 
\frac{\alpha _{2}}{2} & 0 & 1%
\end{pmatrix}%
.
\end{equation*}%
Then $\theta \ast \varphi =\big( -2\Delta _{11},\ \Delta
_{11}-\Delta _{12},\ -2\Delta _{13}\big) $. Thus we get the
algebra ${\rm S}_{05}$.

\item $\theta =\eta _{4}$. Then $\big( {\rm S},\cdot _{\theta }\big) $
is associative if and only if $\alpha _{1}=0$. So we assume $\alpha _{1}\neq
0$. Let $\varphi $ be the first of the following matrices if $\alpha _{2}=0$ or
the second if $\alpha _{2}\neq 0$.%
\begin{equation*}
\begin{pmatrix}
1 & 0 & 0 \\ 
0 & \alpha _{1} & 0 \\ 
0 & 0 & 1%
\end{pmatrix}%
, \
\begin{pmatrix}
1 & 0 & 0 \\ 
0 & \alpha _{1} & 0 \\ 
-\frac{2}{\alpha _{2}} & 0 & \frac{1}{\alpha _{2}}%
\end{pmatrix}%
.
\end{equation*}%
Then \begin{center}$\theta \ast \varphi \in \big\{\big( 2\Delta _{11},\ \Delta
_{11}+\Delta _{12},\ 2\Delta _{13}\big),\ \big(
2\Delta _{11},\ \Delta _{11}+\Delta
_{12},\ \Delta _{33}\big) \}$.\end{center} 
So we get the algebras ${\rm S}%
_{08}$ and ${\rm S}_{09}$.

\item $\theta =\eta _{5}$. Then $\big( {\rm S},\cdot _{\theta }\big) $
is associative if and only if $\alpha _{1}=0$. So we assume $\alpha _{1}\neq
0$. Let $\varphi $ be the first of the following matrices if $\alpha _{2}\neq 0$
or the second if $\alpha _{2}=0$.%
\begin{equation*}
\begin{pmatrix}
1 & 0 & 0 \\ 
0 & \alpha _{1} & 0 \\ 
0 & 0 & \frac{1}{\alpha _{2}}%
\end{pmatrix}%
, \ 
\begin{pmatrix}
1 & 0 & 0 \\ 
0 & \alpha _{1} & 0 \\ 
0 & 0 & 1%
\end{pmatrix}%
.
\end{equation*}%
Then \begin{center}
    $\theta \ast \varphi \in \big\{\big( 2\Delta _{11},\ \Delta_{11}+\Delta _{12}, \ \Delta _{33}\big),\ \big(
2\Delta _{11}, \ \Delta _{11}+\Delta _{12},\ 0\big) \big\}$.
\end{center} Hence we get the algebras ${\rm S}_{09}$ and ${\rm S}_{10}$.

\item $\theta =\eta _{6}$. Then $\big( {\rm S},\cdot _{\theta }\big) $
is associative if and only if $\alpha _{1}=0$. So we assume $\alpha _{1}\neq
0$. If $\alpha _{3}\big( \alpha _{3}-2\big) \neq 0$, we choose $\varphi $ to
be the following automorphism:%
\begin{equation*}
\varphi =%
\begin{pmatrix}
1 & 0 & 0 \\ 
0 & \alpha _{1} & 0 \\ 
-\frac{\alpha _{2}}{\alpha _{3}-2} & 0 & \frac{\alpha _{2}}{\alpha
_{3}\big( \alpha _{3}-2\big) }%
\end{pmatrix}%
.
\end{equation*}%
Then $\theta \ast \varphi =\big( 2\Delta _{11},\ \Delta_{11}+\Delta _{12},\ \Delta _{33}\big) $. Therefore we
obtain the algebra ${\rm S}_{09}$. If $\alpha _{3}=0$, we choose $\varphi $
to be the following automorphism:%
\begin{equation*}
\varphi =%
\begin{pmatrix}
1 & 0 & 0 \\ 
0 & \alpha _{1} & 0 \\ 
\frac{\alpha _{2}}{2} & 0 & 1%
\end{pmatrix}%
.
\end{equation*}%
Then $\theta \ast \varphi =\big( 2\Delta _{11},\Delta
_{11}+\Delta _{12},0\big) $. So we get the algebra ${\rm S}%
_{10}$. If $\alpha _{3}=2$, we choose $\varphi $ to be the following
automorphism:%
\begin{equation*}
\varphi =%
\begin{pmatrix}
1 & 0 & 0 \\ 
0 & \alpha _{1} & 0 \\ 
-\frac{\alpha _{2}}{2} & 0 & 1%
\end{pmatrix}%
.
\end{equation*}%
Then $\theta \ast \varphi =\big( 2\Delta _{11},\Delta
_{11}+\Delta _{12},2\Delta _{13}\big) $. Thus we get the
algebra ${\rm S}_{08}$.

\item $\theta =\eta _{7}$. Then $\big( {\rm S},\cdot _{\theta }\big) $
is associative if and only if $\alpha _{3}=0$. So we assume $\alpha _{3}\neq
0$. Let $\varphi $ be the following automorphism:%
\begin{equation*}
\varphi =%
\begin{pmatrix}
1 & 0 & 0 \\ 
0 & \alpha _{3} & 0 \\ 
-\frac{\alpha _{1}}{\alpha _{2}} & 0 & \frac{1}{\alpha _{2}}%
\end{pmatrix}%
.
\end{equation*}%
Then $\theta \ast \varphi =\big( \Delta _{13}, \ \Delta_{11}+\Delta _{23},\ \Delta _{11}+\Delta _{33}\big) 
$. So we obtain the algebras ${\rm S}_{11}$.
\end{itemize}

\item
\underline{${\rm S}^{-}=\mathcal{L}_{03}^{1}$.}  Let $\theta \ =\ \big(
B_{1},B_{2},B_{3}\big) \neq 0$ be an arbitrary element of    $ 
{\rm Z}^{2}\big( \mathcal{L}_{03}^{1},\mathcal{L}_{03}^{1}\big) $. Then $\theta \in \big\{ \eta _{1},\ \eta _{2}\big\} $ where%
\begin{eqnarray*}
\eta _{1} &=&\big( 2\Delta _{11},\ \alpha _{1}\Delta_{11}+\Delta _{12},\ \alpha _{2}\Delta _{11}+\Delta_{13}\big) , \\
\eta _{2} &=&\big( -2\Delta _{11},\ \alpha _{1}\Delta_{11}-\Delta _{12},\ \alpha _{2}\Delta _{11}-\Delta_{13}\big) ,
\end{eqnarray*}%
for some $\alpha _{1},\alpha _{2}\in \mathbb{C}$.
The automorphism group of $\mathcal{L}_{03}^{1}$, ${\rm Aut}%
\big( \mathcal{L}_{03}^{1}\big) $, consists of the invertible
matrices of the following form:%
\begin{equation*}
\varphi =%
\begin{pmatrix}
1 & 0 & 0 \\ 
a_{21} & a_{22} & a_{23} \\ 
a_{31} & a_{32} & a_{33}%
\end{pmatrix}%
.
\end{equation*}

\begin{itemize}
\item $\theta =\eta _{1}$. Let $\varphi $ be the first of the following
matrices if $\alpha _{1}\neq 0$ or the second if $\alpha _{1}=0$.
\end{itemize}

\begin{equation*}
\begin{pmatrix}
1 & 0 & 0 \\ 
0 & \alpha _{1} & 0 \\ 
0 & \alpha _{2} & 1%
\end{pmatrix}%
, \ 
\begin{pmatrix}
1 & 0 & 0 \\ 
0 & 0 & 1 \\ 
0 & \alpha _{2} & 0%
\end{pmatrix}%
.
\end{equation*}%
Then $\theta \ast \varphi =\big( 2\Delta _{11}, \ \Delta_{11}+\Delta _{12},\ \Delta _{13}\big) $. Thus we obtain
the algebra ${\rm S}_{12}$.

\begin{itemize}
\item $\theta =\eta _{2}$. Let $\varphi $ be the first of the following
matrices if $\alpha _{1}\neq 0$ or the second if $\alpha _{1}=0$.
\end{itemize}

\begin{equation*}
\begin{pmatrix}
1 & 0 & 0 \\ 
0 & \alpha _{1} & 0 \\ 
0 & \alpha _{2} & 1%
\end{pmatrix}%
,%
\begin{pmatrix}
1 & 0 & 0 \\ 
0 & 0 & 1 \\ 
0 & \alpha _{2} & 0%
\end{pmatrix}%
.
\end{equation*}%
Then $\theta \ast \varphi =\big( -2\Delta _{11},\ \Delta_{11}-\Delta _{12}, \ -\Delta _{13}\big) $. Thus we obtain
the algebra ${\rm S}_{13}$.
\end{enumerate}

\subsubsection{Corollaries from Theorem A2}

\begin{definition}
An algebra ${\rm A}$ is called  assosymmetric if the following identities hold
\begin{center}
    $\big(
x,y,z\big) =\big( y,x,z\big)=\big( x,z,y\big).$
\end{center}
\end{definition}

In the following result we give a characterization of assosymmetric algebras:
\begin{lemma}\label{charcterization}
     An algebra ${\rm A}$ is assosymmetric if and only if it is semi-alternative and Lie-admissible.
\end{lemma}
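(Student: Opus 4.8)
The plan is to route both implications through the already-established identity (\ref{id1-semi}), which for a semi-alternative algebra expresses the Jacobian of the commutator as $J(a,b,c) = 3(a,b,c) - 3(a,c,b)$, together with the elementary group-theoretic observation that a transposition and a $3$-cycle generate $S_3$. Throughout, Lie-admissibility of ${\rm A}$ is understood in its usual sense as the vanishing of this Jacobian, $J \equiv 0$, i.e. $({\rm A},[\cdot,\cdot])$ being a Lie algebra. The conceptual content is simply that the associator of an assosymmetric algebra is invariant under all of $S_3$, and both hypotheses (semi-alternative, Lie-admissible) are each a single generator of that symmetry.

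For the forward implication I would start from the defining assosymmetric identities $(x,y,z) = (y,x,z)$ and $(x,y,z) = (x,z,y)$, which say the associator is fixed by the transpositions $\tau_{12}$ and $\tau_{23}$. Composing them gives $(x,y,z) = (y,x,z) = (y,z,x)$ (swapping the last two arguments of $(y,x,z)$), so ${\rm A}$ is semi-alternative. With semi-alternativity established, identity (\ref{id1-semi}) applies, and the second assosymmetric identity $(x,y,z) = (x,z,y)$ makes its right-hand side $3(x,y,z) - 3(x,z,y)$ vanish; hence $J \equiv 0$ and ${\rm A}$ is Lie-admissible.

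For the converse, semi-alternativity is assumed outright, so (\ref{id1-semi}) is available directly. Lie-admissibility ($J \equiv 0$) then forces $3\big((x,y,z) - (x,z,y)\big) = 0$, and since we work over $\mathbb{C}$ we may divide by $3$ to obtain $(x,y,z) = (x,z,y)$, i.e. invariance under $\tau_{23}$. Semi-alternativity $(x,y,z) = (y,z,x)$ supplies invariance under the cyclic shift; as a transposition and a $3$-cycle generate $S_3$, the associator is totally symmetric, and reading off $(x,y,z) = (y,x,z) = (x,z,y)$ recovers exactly the assosymmetric identities.

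The only delicate points are bookkeeping rather than substance: confirming that $\tau_{23}$ together with the cyclic shift generates all of $S_3$, so that the two displayed symmetries upgrade to full symmetry of the associator, and the harmless division by $3$, which is the sole place the characteristic enters and would force the hypothesis $\mathrm{char} \neq 3$ over a general field. No heavier machinery is needed; once (\ref{id1-semi}) is invoked, each direction reduces to a one-line manipulation together with this permutation-generation remark.
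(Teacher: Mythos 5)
Your proof is correct and takes essentially the same route as the paper's: the substantive direction (semi-alternative $+$ Lie-admissible $\Rightarrow$ assosymmetric) is argued identically, by combining identity (\ref{id1-semi}) with the vanishing Jacobian to get $(x,y,z)=(x,z,y)$ and then invoking semi-alternativity to recover the remaining assosymmetric identity. The only difference is cosmetic: for the forward direction the paper simply cites Lie-admissibility of assosymmetric algebras as known, whereas you derive it self-containedly by first showing assosymmetry implies semi-alternativity and then applying (\ref{id1-semi}).
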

   
\begin{proof}
It is known that if ${\rm A}$ is assosymmetric, then it is Lie-admissible. On the other hand, suppose that ${\rm A}$ is semi-alternative and Lie-admissible. Let $x,y,z\in {\rm A}$. Then, from (\ref{id1-semi}), we have    
    $\big(
x,y,z\big) =\big( x,z,y\big).$
Since  ${\rm A}$ is semi-alternative, we have 
    $\big(
y,x,z\big) =\big( x,z,y\big).$
Thus, $(x,y,z)=(y,x,z)=(x,z,y)$ and therefore ${\rm A}$ is assosymmetric. 
\end{proof}
Since all complex $3$-dimensional Malcev algebras are Lie, all complex $3$-dimensional semi-alternative algebras are Lie-admissible. So, by Lemma \ref{charcterization}, we have the following result:
\begin{corollary}\label{assos}
    Let ${\rm A}$ be a complex $3$-dimensional assosymmetric algebra.
Then ${\rm A}$ is isomorphic to one of the semi-alternative algebras listed in Theorem A2.
\end{corollary}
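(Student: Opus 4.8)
The plan is to deduce the statement from Lemma \ref{charcterization} together with the classification in Theorem A2. By Lemma \ref{charcterization}, every assosymmetric algebra is in particular semi-alternative: the two defining identities $(x,y,z)=(y,x,z)$ and $(x,y,z)=(x,z,y)$ are invariance under the two adjacent transpositions, and these generate all of $S_3$, so the associator is symmetric in its three arguments; in particular the cyclic identity $(x,y,z)=(y,z,x)$ holds. Hence any complex $3$-dimensional assosymmetric algebra ${\rm A}$ is a complex $3$-dimensional semi-alternative algebra, and Theorem A2 then exhibits ${\rm A}$ as isomorphic to one of the algebras on that list. This already gives the assertion.

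To confirm that nothing on the list is spurious—that is, that the two classes in fact coincide in dimension $3$—I would also argue the reverse inclusion. By Lemma \ref{charcterization} it suffices to check that every $3$-dimensional semi-alternative algebra is Lie-admissible. For this I would pass to the commutator algebra: Lemma \ref{commutator} guarantees that for any semi-alternative ${\rm A}$ the bracket $[\cdot,\cdot]$ makes ${\rm A}$ a Malcev algebra. In dimension $3$, Theorem \ref{3-dim lie} shows that every nontrivial Malcev algebra is in fact a Lie algebra, so ${\rm A}^{-}$ is Lie, i.e. ${\rm A}$ is Lie-admissible. Applying Lemma \ref{charcterization} a second time, ${\rm A}$ is assosymmetric.

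Combining both inclusions, the complex $3$-dimensional assosymmetric and semi-alternative algebras are literally the same, so the list in Theorem A2 is the complete classification of the former. The step carrying the real weight is not the final bookkeeping but the input it relies on: the strengthening in Lemma \ref{commutator} that ${\rm A}^{-}$ is Malcev rather than merely binary-Lie, together with the collapse of $3$-dimensional Malcev algebras to Lie algebras in Theorem \ref{3-dim lie}. Once these are granted, there is no case analysis and the corollary is immediate.
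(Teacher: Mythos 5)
Your proposal is correct and follows essentially the same route as the paper: the paper also combines Lemma \ref{charcterization} with the fact that, by Lemma \ref{commutator} and Theorem \ref{3-dim lie}, every complex $3$-dimensional semi-alternative algebra has a Lie (hence Lie-admissible) commutator algebra, so the $3$-dimensional assosymmetric and semi-alternative algebras coincide. Your explicit remark that the two assosymmetric identities generate the full $S_3$-symmetry of the associator is a nice spelled-out justification of the inclusion the paper treats as immediate, but it does not change the argument.
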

The question arises now: Is there a $4$-dimensional (non-assosymmetric) semi-alternative algebra?
Following Lemma \ref{commutator} and Lemma \ref{charcterization}, we need to consider semi-alternative structures on non-Lie Malcev algebras  of dimension $4$.
\begin{lemma}
\label{non-Lie Malcev}Up to isomorphism, there exists a unique non-Lie
Malcev algebra of dimension $4$, that is%
\begin{longtable}{lcllllllllllllllllll}
${\rm M}$&$:$&
$\left[ e_{1},e_{2}\right] =e_{3}$&
$\left[ e_{1},e_{4}\right]=e_{1}$&
$\left[ e_{2},e_{4}\right]=e_{2}$&
$\left[ e_{3},e_{4}\right] =-e_{3}$
\end{longtable}
\end{lemma}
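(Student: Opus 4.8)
The plan is to reduce the classification to the solvable case and then pin down the structure by hand. Recall that an anticommutative $M$ is Malcev precisely when the Jacobian $J(x,y,z)=[[x,y],z]+[[y,z],x]+[[z,x],y]$ obeys the Malcev identity, and that $M$ is a Lie algebra exactly when $J\equiv 0$; so I must classify the $4$-dimensional Malcev algebras with $J\not\equiv 0$. First I would invoke the Malcev analogue of the Levi decomposition over $\mathbb{C}$: as a vector space $M=S+R$ with $S$ a semisimple Malcev subalgebra and $R$ the solvable radical. Since (by Kuzmin's theorem) every simple non-Lie Malcev algebra has dimension $7$, any semisimple $S$ of dimension $\le 4$ is a semisimple Lie algebra, hence $S=0$ or $S=\mathfrak{sl}_2$. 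If $S=\mathfrak{sl}_2$ then $R$ is one-dimensional, and as $\mathfrak{sl}_2$ is perfect it acts trivially on $R$, so the extension is the Lie direct sum $\mathfrak{sl}_2\oplus\mathbb{C}$ and $J\equiv 0$, contradicting non-Lie. Therefore $M$ must be solvable.

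Next I would exploit solvability through the derived subalgebra. Since $M$ is solvable, $M^{(1)}=[M,M]$ is a proper subalgebra, so $\dim M^{(1)}\le 3$; being a subalgebra of a Malcev algebra it is itself Malcev, and by Theorem~\ref{3-dim lie} every Malcev algebra of dimension $\le 3$ is a Lie algebra. Since $J(x,y,z)\in[M,M^{(1)}]\subseteq M^{(1)}$, a short analysis of the small cases rules out $\dim M^{(1)}\le 2$ (there the Jacobian is forced to vanish and $M$ would be Lie), leaving $\dim M^{(1)}=3$. Then $M/M^{(1)}$ is one-dimensional, so I may write $M=N\rtimes\langle d\rangle$ with $N:=M^{(1)}$ a $3$-dimensional Lie ideal and $D:={\rm ad}_d|_N\colon N\to N$. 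The key computation is the identity $J(x,y,d)=-\big(D[x,y]-[Dx,y]-[x,Dy]\big)$, which shows that the failure of $M$ to be Lie is measured exactly by the failure of $D$ to be a derivation of $N$. In particular, if $N$ were abelian this defect would vanish and $M$ would be Lie, so $N$ must be non-abelian, i.e.\ the Heisenberg algebra $[e_1,e_2]=e_3$.

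It then remains to determine $D$. I would substitute a general $D\in\mathfrak{gl}(N)$ into the full Malcev identity for the triples involving $d$; the resulting polynomial constraints, together with $J\not\equiv 0$, force $D$ to act on $N$ so that its derivation-defect is a nonzero multiple of $e_3$ and fix the relative spectrum of $D$ (the overall scale being free). Rescaling $d\mapsto cd$ normalizes the scale and the residual freedom is absorbed by ${\rm Aut}(N)$, producing the single normal form $[e_1,e_2]=e_3$, $[e_1,e_4]=e_1$, $[e_2,e_4]=e_2$, $[e_3,e_4]=-e_3$, that is, the algebra ${\rm M}$. Finally I would check directly that ${\rm M}$ satisfies the Malcev identity and is genuinely non-Lie, since $J(e_1,e_2,e_4)=-3e_3\neq 0$.

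The main obstacle will be this last step: organizing the case analysis for the pair $(N,D)$ so as to prove that \emph{every} admissible choice reduces, under rescaling of $d$ and $\mathrm{Aut}(N)$, to the one normal form, while simultaneously discarding all the derivation-like $D$ (which yield Lie algebras) and confirming that no continuous parameter survives. Some care is also needed in the reductions of the second paragraph—eliminating $\dim M^{(1)}\le 2$ and the nilpotent possibility—since these are precisely the places where a stray non-Lie example could otherwise hide.
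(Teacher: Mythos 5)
The paper gives no proof of this lemma at all: it is quoted as a known classical fact (Kuzmin's classification of small-dimensional Malcev algebras), so your outline has to stand on its own. Its skeleton is the natural one, and your key identity is correct: with $D=\mathrm{ad}_d|_N$ one indeed has $J(x,y,d)=-\bigl(D[x,y]-[Dx,y]-[x,Dy]\bigr)$, so for $M=N\rtimes\langle d\rangle$ with $N$ a $3$-dimensional (hence Lie, by Theorem \ref{3-dim lie}) ideal, non-Lie-ness is exactly the derivation defect of $D$; the final check $J(e_1,e_2,e_4)=-3e_3\neq 0$ is also right. But three steps are genuinely gapped. First, ``as $\mathfrak{sl}_2$ is perfect it acts trivially on $R$'' is a Lie-algebra argument (one-dimensional representations factor through the abelianization); for Malcev algebras the action on a module is not a homomorphism in any sense that makes perfectness usable, and in fact $\mathfrak{sl}_2$ admits a two-dimensional \emph{non-Lie} Malcev module (this is precisely how non-Lie Malcev algebras of dimension $5$ arise), so this style of reasoning is exactly what a proof here must avoid. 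The conclusion for $\dim R=1$ is true, but only after writing $[s,r]=\varphi(s)r$ and extracting from the Malcev identity the condition $\varphi(x)\varphi([x,y])=0$ for all $x,y$, which for $\mathfrak{sl}_2$ forces $\varphi=0$. Second, ``$N$ must be non-abelian, i.e.\ the Heisenberg algebra'' is a non sequitur: a non-abelian $3$-dimensional Lie algebra can also be $\mathcal{L}_{02}$ or $\mathcal{L}_{03}^{\alpha}$ (only $\mathcal{L}_{04}=\mathfrak{sl}_2$ is excluded by solvability). To jump to Heisenberg you need the nontrivial theorem that the derived subalgebra of a solvable Malcev algebra in characteristic $0$ is nilpotent, which you never invoke; otherwise you must run the $D$-analysis for each of these $N$ as well and show that only derivations occur there.

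Third, the two computational cores are announced rather than performed: the claim that $\dim M^{(1)}\le 2$ forces $J\equiv 0$ (short for $\dim M^{(1)}=1$, but $\dim M^{(1)}=2$ needs a real argument), and, decisively, the determination of all $D\in\mathfrak{gl}(N)$ satisfying the Malcev constraints on the Heisenberg algebra together with the reduction of every non-derivation solution to the single normal form via $\mathrm{Aut}(N)$ and rescaling of $d$. That last step is where the uniqueness assertion of the lemma actually lives, and you flag it as ``the main obstacle'' instead of resolving it. Note also that the Malcev identity is quadratic in the variable $x$, so checking it on a semidirect sum requires the polarized cases $x=n+\lambda d$, not only $x\in N$ and $x=d$; an outline that tests only the ``pure'' triples can silently miss constraints on $D$. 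As it stands, then, the proposal is a sensible plan with correct landmarks, but it is not yet a proof: one bridge is built with an argument that fails for Malcev algebras, and the two bridges carrying the actual classification are asserted rather than constructed. (Given that the result is classical, the honest alternatives are either to complete these computations or simply to cite Kuzmin, as the paper implicitly does.)
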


Note that ${\rm B}_{1}^{-1}$ is a Malcev algebra which is isomorphic to $%
{\rm M}$ by means of the mapping $\varphi :{\rm M}\longrightarrow 
{\rm B}_{1}^{-1}$ given by $\varphi \left( e_{1}\right) =e_{2},$\ 
$\varphi \left(e_{2}\right) =e_{3},$ \ 
$\varphi \left(e_{3}\right) =e_{4},$ \ 
$\varphi \left(e_{4}\right) =-e_{1}$.

\begin{theorem} \label{4-dim non-asso}
Let ${\mathbb S}$ be a complex semi-alternative algebra of dimension $4$. Then ${\mathbb S}$\ is assosymetric or it is isomorphic
to one of the following non-assosymetric algebras:

\begin{longtable}{llllll}

${\mathbb S}_{01}$ & $:$ & $e_{1} e_{2} = e_{3}$ & $e_{2} e_{1} = -e_{3}$ & $e_{1} e_{4} = 2e_{1}$ & $e_{2} e_{4} = 2e_{2}$ \\
 & & $e_{4} e_{3} = 2e_{3}$ & $e_{4} e_{4} = 2e_{4}$ & & \\
${\mathbb S}_{02}$ & $:$ & $e_{1} e_{2} = e_{3}$ & $e_{2} e_{1} = -e_{3}$ & $e_{1} e_{4} = 2e_{1}$ & $e_{2} e_{4} = 2e_{2} + e_{3}$ \\
 & & $e_{4} e_{2} = e_{3}$ & $e_{4} e_{3} = 2e_{3}$ & $e_{4} e_{4} = -4e_{1} + 2e_{4}$ & \\
${\mathbb S}_{03}$ & $:$ & $e_{1} e_{2} = e_{3}$ & $e_{2} e_{1} = -e_{3}$ & $e_{1} e_{4} = 2e_{1}$ & $e_{2} e_{4} = 2e_{2}$ \\
 & & $e_{4} e_{3} = 2e_{3}$ & $e_{4} e_{4} = e_{3} + 2e_{4}$ & & \\
${\mathbb S}_{04}$ & $:$ & $e_{1} e_{2} = e_{3}$ & $e_{2} e_{1} = -e_{3}$ & $e_{4} e_{1} = -2e_{1}$ & $e_{4} e_{2} = -2e_{2}$ \\
 & & $e_{3} e_{4} = -2e_{3}$ & $e_{4} e_{4} = -2e_{4}$ & & \\
${\mathbb S}_{05}$ & $:$ & $e_{1} e_{2} = e_{3}$ & $e_{2} e_{1} = -e_{3}$ & $e_{4} e_{1} = -2e_{1}$ & $e_{2} e_{4} = e_{3}$ \\
 & & $e_{4} e_{2} = -2e_{2} + e_{3}$ & $e_{3} e_{4} = -2e_{3}$ & $e_{4} e_{4} = -4e_{1} - 2e_{4}$ & \\
${\mathbb S}_{06}$ & $:$ & $e_{1} e_{2} = e_{3}$ & $e_{2} e_{1} = -e_{3}$ & $e_{4} e_{1} = -2e_{1}$ & $e_{4} e_{2} = -2e_{2}$ \\
 & & $e_{3} e_{4} = -2e_{3}$ & $e_{4} e_{4} = e_{3} - 2e_{4}$ & & \\
\end{longtable}
 
\end{theorem}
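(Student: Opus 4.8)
The plan is to reduce the whole problem to a single Malcev algebra and then run the extension procedure of the preliminaries. By Lemma~\ref{charcterization} a semi-alternative algebra is assosymmetric if and only if it is Lie-admissible, so ${\mathbb S}$ is non-assosymmetric exactly when ${\mathbb S}^{-}$ violates the Jacobi identity. Since ${\mathbb S}^{-}$ is always a Malcev algebra by Lemma~\ref{commutator}, in the non-assosymmetric case ${\mathbb S}^{-}$ must be a $4$-dimensional \emph{non-Lie} Malcev algebra, and by Lemma~\ref{non-Lie Malcev} there is exactly one such algebra, namely ${\rm M}\cong {\rm B}_{1}^{-1}$. Hence every non-assosymmetric ${\mathbb S}$ is of the form $({\rm M},\cdot_{\theta})$ for a nonzero symmetric $\theta\in {\rm Z}^{2}({\rm M},{\rm M})$. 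Conversely, because $\theta$ is symmetric, the commutator of $\cdot_{\theta}$ cancels the $\theta$-part and recovers the bracket of ${\rm M}$; thus $({\rm M},\cdot_{\theta})^{-}={\rm M}$ is non-Lie and $({\rm M},\cdot_{\theta})$ is automatically non-assosymmetric. This means that no cocycle has to be discarded: the entire space ${\rm Z}^{2}({\rm M},{\rm M})$ contributes exactly the algebras we want.

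With this reduction I would carry out the three-step procedure described in the preliminaries, working on ${\rm A}={\rm M}$ (equivalently on ${\rm B}_{1}^{-1}$, as already done for the binary $(-1,1)$-identity in the proof of Theorem~\ref{non (-1,1)}). \textbf{Step 1:} compute ${\rm Z}^{2}({\rm M},{\rm M})$ by imposing the semi-alternative cocycle condition of Definition~\ref{Z2} on a general symmetric $\theta=(B_{1},B_{2},B_{3},B_{4})$ expressed in the $\Delta_{ij}$ basis, and describe its solution set. \textbf{Step 2:} compute ${\rm Aut}({\rm M})$ and the induced action $\theta\ast\varphi$, either directly or by transporting the automorphism group of ${\rm B}_{1}^{-1}$ through the explicit isomorphism ${\rm M}\to {\rm B}_{1}^{-1}$ recorded just after Lemma~\ref{non-Lie Malcev}. \textbf{Step 3:} select an orbit representative and read off the multiplication table of $({\rm M},\cdot_{\theta})$, producing the six algebras ${\mathbb S}_{01},\dots,{\mathbb S}_{06}$.

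I expect the cocycle set to split into two mirror families distinguished by the sign of a leading component of $\theta$, in direct analogy with the $\eta_{1}/\eta_{2}$ split of the ${\rm B}_{1}^{-1}$ case of Theorem~\ref{non (-1,1)}, where the two signs already produced distinct algebras ${\mathbb B}_{07}$ and ${\mathbb B}_{08}$; a similar sign dichotomy also appears in the $\mathcal{L}_{03}^{-1}$ case of Theorem~A2. Within each family the remaining freedom should be a short list of scalar parameters that can be normalized by the translation and scaling parts of ${\rm Aut}({\rm M})$, with a casework on which parameters vanish; this is what should yield the three representatives ${\mathbb S}_{01},{\mathbb S}_{02},{\mathbb S}_{03}$ from the ``$+$'' family and ${\mathbb S}_{04},{\mathbb S}_{05},{\mathbb S}_{06}$ from the ``$-$'' family. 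Finally I would confirm that the six algebras are pairwise non-isomorphic by comparing cheap isomorphism invariants, such as the dimensions of the one- and two-sided annihilators, of the square ${\mathbb S}^{2}$, and of the associator ideal, noting that non-assosymmetry of each is already guaranteed by the commutator computation of the first paragraph.

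The main obstacle is Step~1. Unlike a genuine cohomology space, ${\rm Z}^{2}({\rm M},{\rm M})$ is \emph{not} a linear subspace, because the cocycle condition of Definition~\ref{Z2} contains the quadratic associator term $(x,y,z)_{\theta}$; the solution set is therefore cut out by a system of quadratic equations in the coordinates of $\theta$, and correctly enumerating its components, without omissions or duplications, is where the real difficulty lies. I would organize the computation by first exploiting the purely linear bracket terms to restrict the admissible supports of $B_{1},\dots,B_{4}$, and only then substituting these into the quadratic constraints. The already-computed ${\rm Z}^{2}({\rm B}_{1}^{-1},{\rm B}_{1}^{-1})$ from Theorem~\ref{non (-1,1)} is a useful guide for the shape of the answer, but one must keep in mind that the identity there is the binary $(-1,1)$-identity rather than the semi-alternative one, so the defining equations differ and the computation has to be redone from scratch.
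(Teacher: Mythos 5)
Your proposal takes essentially the same approach as the paper: the reduction of a non-assosymmetric ${\mathbb S}$ to ${\mathbb S}^{-}={\rm M}$ via Lemma \ref{charcterization} and Lemma \ref{non-Lie Malcev}, followed by the orbit analysis of ${\rm Z}^{2}\big({\rm M},{\rm M}\big)$ under ${\rm Aut}\big({\rm M}\big)$, is precisely the paper's proof, and even your anticipated split into two mirror sign families with three representatives each matches the paper's $\eta_{1}/\eta_{2}$ computation with $\big(\alpha_{1},\alpha_{2},\alpha_{3}\big)$ normalized to $\big\{(0,0,0),(1,0,0),(0,0,1)\big\}$. The only shortfall is that you outline rather than execute the computation of the cocycles, the automorphism group, and the normal forms, but the conceptual reduction, which is the substantive part of the argument, is complete and correct.
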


\begin{proof}
Let ${\mathbb S}$ be a complex semi-alternative algebra of dimension four.
Suppose that ${\mathbb S}$ is non-assosymetric. Then, by Lemma \ref{charcterization} and Lemma \ref%
{non-Lie Malcev}, we may assume ${\mathbb S}^{-}={\rm M}$.  Choose an
arbitrary element $\theta =\left( B_{1},B_{2},B_{3},B_{4}\right) \in {\rm Z}^{2}\big( {\rm M},{\rm M}\big) $. Then $\theta \in \left\{
\eta _{1},\eta _{2}\right\} $ where%
\begin{longtable}{lcl}
$\eta _{1} $&$=$&$\big( \Delta _{14}-4\alpha _{1}\Delta _{44},\ 
\Delta_{24}+4\alpha _{2}\Delta _{44},\ 
\alpha _{2}\Delta _{14}+\alpha_{1}\Delta _{24}+\Delta _{34}+\alpha _{3}\Delta _{44},\ 
2\Delta
_{44}\big) ,$ \\
$\eta _{2} $&$=$&$\big( -\Delta _{14}-4\alpha _{1}\Delta_{44},\ 
-\Delta _{24}+4\alpha _{2}\Delta _{44},\ 
\alpha _{2}\Delta_{14}+\alpha _{1}\Delta _{24}-\Delta _{34}+\alpha _{3}\Delta_{44},\
-2\Delta _{44}\big),$
\end{longtable}
\noindent for some $\alpha _{1},\alpha _{2},\alpha _{3}\in \mathbb{C}$. The
automorphism group of ${\rm M}$, $\mathrm{Aut}\left( {\rm M}\right) $%
, consists of the invertible matrices of the following form:%
\begin{equation*}
\varphi =%
\begin{pmatrix}
a_{11} & a_{12} & 0 & a_{14} \\ 
a_{21} & a_{22} & 0 & a_{24} \\ 
\frac{a_{11}a_{24}-a_{21}a_{14}}{2} & \frac{a_{12}a_{24}-a_{22}a_{14}}{2} & 
a_{11}a_{22}-a_{12}a_{21} & a_{34} \\ 
0 & 0 & 0 & 1%
\end{pmatrix}%
.
\end{equation*}

\begin{itemize}
\item $\theta =\eta _{1}$. Let $\varphi =\bigl(a_{ij}\bigr)\in $ $\text{Aut}\left( {\rm M}\right) $.
Then \begin{longtable}{lcl}
$\eta _{1}\ast \varphi $&$=$&$\big( \Delta _{14}-4\beta _{1}\Delta_{44},\ 
\Delta _{24}+4\beta _{2}\Delta _{44},\ 
\beta _{2}\Delta_{14}+\beta _{1}\Delta _{24}+\Delta _{34}+\beta _{3}\Delta_{44},\ 2\Delta _{44}\big),$ 
\end{longtable} where%
\begin{longtable}{lcllcllcl}
$\beta _{1} $&$=$&$\frac{\alpha
_{1}a_{22}+\alpha _{2}a_{12}}{a_{11}a_{22}-a_{12}a_{21}},$&
$\beta _{2} $&$=$&$\frac{ \alpha
_{1}a_{21}+\alpha _{2}a_{11}}{a_{11}a_{22}-a_{12}a_{21}},$&
$\beta _{3} $&$=$&$\frac{\alpha _{3}+4\alpha
_{1}a_{24}+4\alpha _{2}a_{14}}{a_{11}a_{22}-a_{12}a_{21}}.$
\end{longtable}
From here, we may assume $\big( \alpha _{1},\alpha _{2},\alpha _{3}\big)
\ \in \ \big \{\left( 0,0,0\right) ,\ \left( 1,0,0\right) ,\ \left( 0,0,1\right) \big\}$. So
we get the algebras ${\mathbb S}_{01},$ ${\mathbb S}_{02},$ and ${\mathbb S}_{03}$.

\item $\theta =\eta _{2}$.
Let $\varphi =\bigl(a_{ij}\bigr)\in $ $\text{Aut}\left( {\rm M}\right) $.
Then
\begin{longtable}{lcl}
$\eta _{2}\ast \varphi $&$=$&$\big( -\Delta _{14}-4\beta _{1}\Delta
_{44},\ 
-\Delta _{24}+4\beta _{2}\Delta _{44},\ 
\beta _{2}\Delta_{14}+\beta _{1}\Delta _{24}-\Delta _{34}+\beta _{3}\Delta
_{44},\ -2\Delta _{44}\big),$ 
\end{longtable}\noindent where $\beta _{1},\beta _{2},\beta _{3}$ are as given in the case $\theta =\eta _{1}$. So we may assume \begin{center}
    $\big( \alpha _{1},\alpha _{2},\alpha _{3}\big)
\ \in\  \big\{\left( 0,0,0\right) , \ \left( 1,0,0\right) ,\ \left( 0,0,1\right) \big\}$.
\end{center} Thus
we get the algebras ${\mathbb S}_{04},$ \ ${\mathbb S}_{05},$ and ${\mathbb S}_{06}$.
\end{itemize}
\end{proof}

\section{The geometric classification of 
  algebras}

\subsection{Preliminaries: definitions and notation}
Given an $n$-dimensional vector space $\mathbb V$, the set ${\rm Hom}(\mathbb V \otimes \mathbb V,\mathbb V) \cong \mathbb V^* \otimes \mathbb V^* \otimes \mathbb V$ is a vector space of dimension $n^3$. This space has the structure of the affine variety $\mathbb{C}^{n^3}.$ Indeed, let us fix a basis $e_1,\dots,e_n$ of $\mathbb V$. Then any $\mu\in {\rm Hom}(\mathbb V \otimes \mathbb V,\mathbb V)$ is determined by $n^3$ structure constants $c_{ij}^k\in\mathbb{C}$ such that
$\mu(e_i\otimes e_j)=\sum\limits_{k=1}^nc_{ij}^ke_k$. A subset of ${\rm Hom}(\mathbb V \otimes \mathbb V,\mathbb V)$ is {\it Zariski-closed} if it can be defined by a set of polynomial equations in the variables $c_{ij}^k.$ 

Let $T$ be a set of polynomial identities.
The set of algebra structures on $\mathbb V$ satisfying polynomial identities from $T$ forms a Zariski-closed subset of the variety ${\rm Hom}(\mathbb V \otimes \mathbb V,\mathbb V)$. We denote this subset by $\mathbb{L}(T)$.
The general linear group ${\rm GL}(\mathbb V)$ acts on $\mathbb{L}(T)$ by conjugations:
$$ (g * \mu )(x\otimes y) = g\mu(g^{-1}x\otimes g^{-1}y)$$
for $x,y\in \mathbb V$, $\mu\in \mathbb{L}(T)\subset {\rm Hom}(\mathbb V \otimes\mathbb V, \mathbb V)$ and $g\in {\rm GL}(\mathbb V)$.
Thus, $\mathbb{L}(T)$ is decomposed into ${\rm GL}(\mathbb V)$-orbits that correspond to the isomorphism classes of algebras.
Let ${\mathcal O}(\mu)$ denote the orbit of $\mu\in\mathbb{L}(T)$ under the action of ${\rm GL}(\mathbb V)$ and $\overline{{\mathcal O}(\mu)}$ denote the Zariski closure of ${\mathcal O}(\mu)$.

Let $\bf A$ and $\bf B$ be two $n$-dimensional algebras satisfying the identities from $T$, and let $\mu,\lambda \in \mathbb{L}(T)$ represent $\bf A$ and $\bf B$, respectively.
We say that $\bf A$ degenerates to $\bf B$ and write $\bf A\to \bf B$ if $\lambda\in\overline{{\mathcal O}(\mu)}$.
Note that in this case we have $\overline{{\mathcal O}(\lambda)}\subset\overline{{\mathcal O}(\mu)}$. Hence, the definition of degeneration does not depend on the choice of $\mu$ and $\lambda$. If $\bf A\not\cong \bf B$, then the assertion $\bf A\to \bf B$ is called a {\it proper degeneration}. We write $\bf A\not\to \bf B$ if $\lambda\not\in\overline{{\mathcal O}(\mu)}$.

Let $\bf A$ be represented by $\mu\in\mathbb{L}(T)$. Then  $\bf A$ is  {\it rigid} in $\mathbb{L}(T)$ if ${\mathcal O}(\mu)$ is an open subset of $\mathbb{L}(T)$.
 Recall that a subset of a variety is called irreducible if it cannot be represented as a union of two non-trivial closed subsets.
 A maximal irreducible closed subset of a variety is called an {\it irreducible component}.
It is well known that any affine variety can be represented as a finite union of its irreducible components in a unique way.
The algebra $\bf A$ is rigid in $\mathbb{L}(T)$ if and only if $\overline{{\mathcal O}(\mu)}$ is an irreducible component of $\mathbb{L}(T)$.

\medskip

\noindent {\bf Method of the description of degenerations of algebras.} In the present work we use the methods applied to Lie algebras in \cite{GRH}.
First of all, if $\bf A\to \bf B$ and $\bf A\not\cong \bf B$, then $\mathfrak{Der}(\bf A)<\mathfrak{Der}(\bf B)$, where $\mathfrak{Der}(\bf A)$ is the   algebra of derivations of $\bf A$. We compute the dimensions of algebras of derivations and check the assertion $\bf A\to \bf B$ only for such $\bf A$ and $\bf B$ that $\mathfrak{Der}(\bf A)<\mathfrak{Der}(\bf B)$.

To prove degenerations, we construct families of matrices parametrized by $t$. Namely, let $\bf A$ and $\bf B$ be two algebras represented by the structures $\mu$ and $\lambda$ from $\mathbb{L}(T)$ respectively. Let $e_1,\dots, e_n$ be a basis of $\mathbb  V$ and $c_{ij}^k$ ($1\le i,j,k\le n$) be the structure constants of $\lambda$ in this basis. If there exist $a_i^j(t)\in\mathbb{C}$ ($1\le i,j\le n$, $t\in\mathbb{C}^*$) such that $E_i^t=\sum\limits_{j=1}^na_i^j(t)e_j$ ($1\le i\le n$) form a basis of $\mathbb V$ for any $t\in\mathbb{C}^*$, and the structure constants of $\mu$ in the basis $E_1^t,\dots, E_n^t$ are such rational functions $c_{ij}^k(t)\in\mathbb{C}[t]$ that $c_{ij}^k(0)=c_{ij}^k$, then $\bf A\to \bf B$.
In this case  $E_1^t,\dots, E_n^t$ is called a {\it parametrized basis} for $\bf A\to \bf B$.
In  case of  $E_1^t, E_2^t, \ldots, E_n^t$ is a {\it parametric basis} for ${\rm A}\to {\bf B},$ it will be denoted by
${\rm A}\xrightarrow{(E_1^t, E_2^t, \ldots, E_n^t)} {\bf B}$. 
To simplify our equations, we will use the notation $A_i=\langle e_i,\dots,e_n\rangle,\ i=1,\ldots,n$ and write simply $A_pA_q\subset A_r$ instead of $c_{ij}^k=0$ ($i\geq p$, $j\geq q$, $k< r$).

Let ${\rm A}(*):=\{ {\rm A}(\alpha)\}_{\alpha\in I}$ be a series of algebras, and let $\bf B$ be another algebra. Suppose that for $\alpha\in I$, $\bf A(\alpha)$ is represented by the structure $\mu(\alpha)\in\mathbb{L}(T)$ and $\bf B$ is represented by the structure $\lambda\in\mathbb{L}(T)$. Then we say that $\bf A(*)\to \bf B$ if $\lambda\in\overline{\{{\mathcal O}(\mu(\alpha))\}_{\alpha\in I}}$, and $\bf A(*)\not\to \bf B$ if $\lambda\not\in\overline{\{{\mathcal O}(\mu(\alpha))\}_{\alpha\in I}}$.

Let $\bf A(*)$, $\bf B$, $\mu(\alpha)$ ($\alpha\in I$) and $\lambda$ be as above. To prove $\bf A(*)\to \bf B$ it is enough to construct a family of pairs $(f(t), g(t))$ parametrized by $t\in\mathbb{C}^*$, where $f(t)\in I$ and $g(t)\in {\rm GL}(\mathbb V)$. Namely, let $e_1,\dots, e_n$ be a basis of $\mathbb V$ and $c_{ij}^k$ ($1\le i,j,k\le n$) be the structure constants of $\lambda$ in this basis. If we construct $a_i^j:\mathbb{C}^*\to \mathbb{C}$ ($1\le i,j\le n$) and $f: \mathbb{C}^* \to I$ such that $E_i^t=\sum\limits_{j=1}^na_i^j(t)e_j$ ($1\le i\le n$) form a basis of $\mathbb V$ for any  $t\in\mathbb{C}^*$, and the structure constants of $\mu({f(t)})$ in the basis $E_1^t,\dots, E_n^t$ are such rational functions $c_{ij}^k(t)\in\mathbb{C}[t]$ that $c_{ij}^k(0)=c_{ij}^k$, then $\bf A(*)\to \bf B$. In this case  $E_1^t,\dots, E_n^t$ and $f(t)$ are called a parametrized basis and a {\it parametrized index} for $\bf A(*)\to \bf B$, respectively.

We now explain how to prove $\bf A(*)\not\to\mathcal  \bf B$.
Note that if $\mathfrak{Der} \ \bf A(\alpha)  > \mathfrak{Der} \  \bf B$ for all $\alpha\in I$ then $\bf A(*)\not\to\bf B$.
One can also use the following  Lemma, whose proof is the same as the proof of  \cite[Lemma 1.5]{GRH}.

\begin{lemma}\label{gmain}
Let $\mathfrak{B}$ be a Borel subgroup of ${\rm GL}(\mathbb V)$ and ${\rm R}\subset \mathbb{L}(T)$ be a $\mathfrak{B}$-stable closed subset.
If $\bf A(*) \to \bf B$ and for any $\alpha\in I$ the algebra $\bf A(\alpha)$ can be represented by a structure $\mu(\alpha)\in{\rm R}$, then there is $\lambda\in {\rm R}$ representing $\bf B$.
\end{lemma}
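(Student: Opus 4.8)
The plan is to reduce the statement to the standard geometric fact that, for a Borel subgroup, the ``sweep'' ${\rm GL}(\mathbb V)\cdot {\rm R}$ of a $\mathfrak{B}$-stable closed set is again closed; once this is available the lemma follows immediately. Following \cite[Lemma 1.5]{GRH}, I would first record the single input that makes everything work: since $\mathfrak{B}$ is a Borel subgroup of $G:={\rm GL}(\mathbb V)$, the homogeneous space $G/\mathfrak{B}$ is a complete (projective) variety, and hence the projection $G/\mathfrak{B}\times \mathbb{L}(T)\to \mathbb{L}(T)$ onto the second factor is a closed morphism.

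Next I would form the incidence set
\[
Y:=\bigl\{\,(g\mathfrak{B},\ x)\in G/\mathfrak{B}\times \mathbb{L}(T)\ :\ g^{-1}\!\cdot x\in {\rm R}\,\bigr\}.
\]
Here the $\mathfrak{B}$-stability of ${\rm R}$ is exactly what is needed for $Y$ to be well defined: if $g^{-1}\!\cdot x\in {\rm R}$ and $g$ is replaced by $gb$ with $b\in\mathfrak{B}$, then $(gb)^{-1}\!\cdot x=b^{-1}\!\cdot(g^{-1}\!\cdot x)\in {\rm R}$, so membership is independent of the chosen coset representative. Since ${\rm R}$ is closed and the action is a morphism, the condition cutting out $Y$ is closed and descends to show that $Y$ is a closed subset of $G/\mathfrak{B}\times \mathbb{L}(T)$. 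Its image under the (closed) projection onto $\mathbb{L}(T)$ is precisely $G\cdot {\rm R}=\bigcup_{g\in G}g\cdot {\rm R}$, which is therefore closed in $\mathbb{L}(T)$.

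With this in hand I would conclude as follows. By hypothesis every $\mu(\alpha)$ lies in ${\rm R}$, so each orbit ${\mathcal O}(\mu(\alpha))=G\cdot\mu(\alpha)$ is contained in $G\cdot {\rm R}$, and hence so is the union $\bigcup_{\alpha\in I}{\mathcal O}(\mu(\alpha))$. Because $G\cdot {\rm R}$ is closed, it contains the Zariski closure $\overline{\{{\mathcal O}(\mu(\alpha))\}_{\alpha\in I}}$ as well. The assumption $\mathbf A(*)\to\mathbf B$ asserts exactly that a structure $\lambda$ representing $\mathbf B$ lies in this closure, so $\lambda\in G\cdot {\rm R}$; writing $\lambda=g\cdot r$ with $g\in G$ and $r\in {\rm R}$, the point $r=g^{-1}\!\cdot\lambda$ lies in ${\mathcal O}(\lambda)$ and therefore represents $\mathbf B$, while simultaneously lying in ${\rm R}$. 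This $r$ is the desired structure.

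The only genuinely nontrivial ingredient is the closedness of $G\cdot {\rm R}$, and this is precisely where the Borel hypothesis is indispensable: for a general closed subgroup $H$ the quotient $G/H$ is not complete, the projection need not be a closed map, and the sweep $G\cdot {\rm R}$ can fail to be closed. Everything else is routine, relying only on the facts that each orbit is $G$-stable, that $G$ acts by homeomorphisms (so that closures of $G$-stable sets remain $G$-stable), and that any two points of a single $G$-orbit represent isomorphic algebras.
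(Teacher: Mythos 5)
Your proof is correct, and it is exactly the argument the paper invokes: the paper gives no independent proof but states that Lemma \ref{gmain} is proved as in \cite[Lemma 1.5]{GRH}, whose proof is precisely this incidence-variety argument — completeness of ${\rm GL}(\mathbb V)/\mathfrak{B}$ forces the sweep ${\rm GL}(\mathbb V)\cdot{\rm R}$ of the $\mathfrak{B}$-stable closed set ${\rm R}$ to be closed, hence it contains $\overline{\{\mathcal O(\mu(\alpha))\}_{\alpha\in I}}\ni\lambda$, and translating $\lambda$ back into ${\rm R}$ gives the required representative of $\bf B$.
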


\subsection{The geometric classification of   
  algebras}

\subsubsection{The geometric classification of   
$\mathfrak{perm}$ algebras}

\begin{theorem}\label{thm:geo_perm}
The variety of complex $3$-dimensional $\mathfrak{perm}$ algebras has dimension $9$ and it has $4$ irreducible components defined by  
\begin{center}
$\mathcal{C}_1=\overline{\mathcal{O}( {\rm A}_{07})},$ \
$\mathcal{C}_2=\overline{\mathcal{O}( {\rm A}_{18})},$ \
$\mathcal{C}_3=\overline{\mathcal{O}( {\rm A}_{24})},$ \ and
$\mathcal{C}_4=\overline{\mathcal{O}( {\rm A}_{14}^{\alpha})}.$ \
\end{center}
In particular, there are only $3$ rigid algebras in this variety.
 
\end{theorem}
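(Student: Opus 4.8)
The plan is to combine the algebraic list from Corollary \ref{perm} with the degeneration machinery of \cite{GRH} recalled above. First I would write down all $3$-dimensional $\mathfrak{perm}$ algebras, namely the commutative associative ones ${\rm A}_{01},\dots,{\rm A}_{11}$ together with the non-commutative members ${\rm A}_{12},{\rm A}_{13},{\rm A}_{14}^{\alpha},{\rm A}_{16},{\rm A}_{18},{\rm A}_{21},{\rm A}_{24}$, and for each compute $\dim\mathfrak{Der}$, hence the orbit dimension $9-\dim\mathfrak{Der}$. The key values are $\dim\mathcal{O}({\rm A}_{07})=9$ (so $\mathfrak{Der}({\rm A}_{07})=0$), $\dim\mathcal{O}({\rm A}_{24})=7$, $\dim\mathcal{O}({\rm A}_{16})=6$, $\dim\mathcal{O}({\rm A}_{18})=3$, and $\dim\mathcal{O}({\rm A}_{14}^{\alpha})=5$ for generic $\alpha$; since the latter form a genuine one-parameter family of pairwise non-isomorphic orbits (${\rm A}_{14}^{\alpha}\cong{\rm A}_{14}^{-\alpha}$ only), one gets $\dim\mathcal{C}_4=6$. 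These four closures are the candidate components.

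The second step is the covering claim $\mathbb{L}(\mathfrak{perm})=\mathcal{C}_1\cup\mathcal{C}_2\cup\mathcal{C}_3\cup\mathcal{C}_4$, proved by exhibiting a parametrized basis for each required degeneration. All commutative algebras lie in $\mathcal{C}_1=\overline{\mathcal{O}({\rm A}_{07})}$: the commutative associative structures form a Zariski-closed ${\rm GL}$-invariant subset whose generic point is ${\rm A}_{07}=\mathbb{C}^3$, and the degenerations ${\rm A}_{07}\to{\rm A}_{0k}$ are explicit. For the non-commutative algebras I would check ${\rm A}_{24}\to{\rm A}_{16}$ and ${\rm A}_{24}\to{\rm A}_{21}$ (both admissible, with smaller orbit and $\dim[\cdot,\cdot]\le 1=\dim[{\rm A}_{24},{\rm A}_{24}]$), placing ${\rm A}_{16},{\rm A}_{21}\in\mathcal{C}_3$, while the nilpotent member ${\rm A}_{12}$ lies in $\mathcal{C}_4$ as the rescaled limit $\alpha\to\infty$ of ${\rm A}_{14}^{\alpha}$, and ${\rm A}_{13}$ is placed by a direct parametrized basis. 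Together with $\dim\mathcal{C}_1=9$ this yields $\dim\mathbb{L}(\mathfrak{perm})=9$.

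The third step is to show the four closures are pairwise incomparable, so that they are exactly the irreducible components. Commutativity is a closed ${\rm GL}$-invariant condition, so $\mathcal{C}_1$ contains only commutative algebras; as ${\rm A}_{18},{\rm A}_{24},{\rm A}_{14}^{\alpha}$ (with $\alpha\ne0$) are non-commutative, none lies in $\mathcal{C}_1$, and ${\rm A}_{07}$ is a component by being top-dimensional. The dimension of the commutator span $[\mathbf{A},\mathbf{A}]=\langle xy-yx\rangle$ is lower semicontinuous along degenerations, and one computes $\dim[{\rm A}_{18},{\rm A}_{18}]=2$ while $\dim[{\rm A}_{24},{\rm A}_{24}]=\dim[{\rm A}_{14}^{\alpha},{\rm A}_{14}^{\alpha}]=1$; this rules out ${\rm A}_{24}\to{\rm A}_{18}$ and ${\rm A}_{14}^{\alpha}\to{\rm A}_{18}$, making $\mathcal{C}_2$ a separate component. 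The remaining containments are excluded by the strict inequality $\dim\mathcal{O}(\mathbf{B})<\dim\mathcal{O}(\mathbf{A})$ for proper degenerations: $\dim\mathcal{C}_2=3$ and $\dim\mathcal{C}_4=6<7=\dim\mathcal{O}({\rm A}_{24})$ force ${\rm A}_{24}\notin\mathcal{C}_2\cup\mathcal{C}_4$, and $\dim\mathcal{O}({\rm A}_{18})=3<5=\dim\mathcal{O}({\rm A}_{14}^{\alpha})$ gives ${\rm A}_{18}\not\to{\rm A}_{14}^{\alpha}$.

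The hard part will be the one remaining non-degeneration ${\rm A}_{24}\not\to{\rm A}_{14}^{\alpha}$ for generic $\alpha$, which is precisely what gives $\mathcal{C}_4\not\subseteq\mathcal{C}_3$ and hence makes $\mathcal{C}_4$ a component. This resists every standard invariant: $\dim\mathfrak{Der}$, $\dim\mathbf{A}^2$, the one- and two-sided annihilator dimensions, and $\dim[\mathbf{A},\mathbf{A}]$ all permit the degeneration, and moreover both induced degenerations $({\rm A}_{24})^{+}\to({\rm A}_{14}^{\alpha})^{+}$ (Jordan) and $({\rm A}_{24})^{-}\to({\rm A}_{14}^{\alpha})^{-}$ (Malcev, the Heisenberg algebra) do exist, so no obstruction comes from $\mathbf{A}^{\pm}$ separately. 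The true obstruction is a modulus: ${\rm A}_{14}^{\alpha}$ records the ratio $\alpha$ of the antisymmetric to the symmetric part of its single nonzero product, whereas ${\rm A}_{24}$ is rigidly balanced. I would make this precise either by analysing the limiting structure constants in an arbitrary parametrized basis, where the constraints $E_1E_1,E_2E_2\to 0$ together with $E_1E_2\to(1+\alpha)E_3$ and $E_2E_1\to(1-\alpha)E_3$ force a relation equivalent to $1-\alpha^2=0$ (impossible for generic $\alpha$), or by applying Lemma \ref{gmain} with a Borel-stable closed subset that contains a representative of ${\rm A}_{24}$ but no representative of ${\rm A}_{14}^{\alpha}$. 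Granting this, the four closures are exactly the irreducible components; ${\rm A}_{07},{\rm A}_{18},{\rm A}_{24}$ are rigid, while $\mathcal{C}_4$ is swept out by the nontrivial family and contains no rigid algebra, giving precisely three rigid algebras.
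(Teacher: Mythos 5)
Your skeleton coincides with the paper's proof: the same four candidate components, the same covering degenerations (${\rm A}_{24}\to{\rm A}_{16}$, ${\rm A}_{24}\to{\rm A}_{21}$, the family ${\rm A}_{14}^{*}$ degenerating onto ${\rm A}_{13}$ and ${\rm A}_{12}$, and every commutative algebra lying in $\overline{\mathcal{O}({\rm A}_{07})}$, which is the fact the paper imports from \cite{akks}), and the same commutativity/dimension arguments for most of the incomparabilities. Two of your deviations are sound and even clarifying. First, the bookkeeping $\dim\mathcal{O}({\rm A}_{14}^{\alpha})=5$ for each $\alpha$ and $\dim\mathcal{C}_4=6$ is correct; the paper's table entry ``$\dim\mathcal{O}({\rm A}_{14}^{\alpha})=6$'' must be read as the dimension of the closure of the whole family. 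Second, your lower-semicontinuity argument for $\dim[\mathbf{A},\mathbf{A}]$ is a valid, self-contained substitute for the paper's appeal to the rigidity of ${\rm A}_{18}$: the locus of structures whose commutator products span a subspace of dimension at most $1$ is Zariski-closed and ${\rm GL}(\mathbb V)$-invariant, it contains the orbits of ${\rm A}_{24}$ and of all ${\rm A}_{14}^{\alpha}$, while $\dim[{\rm A}_{18},{\rm A}_{18}]=2$; together with commutativity of $\mathcal{C}_1$ this isolates $\mathcal{C}_2$.

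The genuine gap is at the step you yourself single out as hard: ${\rm A}_{24}\not\to{\rm A}_{14}^{\alpha}$ for generic $\alpha$ is announced with two candidate strategies, but neither is executed, and this is the only point of the theorem where real work is required. Strategy (a), analysing ``the limiting structure constants in an arbitrary parametrized basis'', is not a proof as described: a degeneration is witnessed by an arbitrary curve $g(t)\in{\rm GL}(\mathbb V)$ whose entries may blow up as $t\to0$, so one cannot reason from limits of individual products of basis vectors; taming all such curves simultaneously is precisely what the closed-set method is for. Strategy (b) is the paper's actual method, but the whole content of that method is the construction of the $\mathfrak{B}$-stable closed set, which you do not supply. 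The paper takes $\mathcal R=\big\{c_{22}^3c_{11}^3=c_{12}^3c_{21}^3,\ A_3^2=0,\ A_3A_1\subseteq A_3,\ A_2A_1\subseteq A_2\big\}$, observes that the standard representative of ${\rm A}_{24}$ lies in $\mathcal R$, and concludes via Lemma \ref{gmain} once one checks that for $\alpha^{2}\neq1$ no representative of ${\rm A}_{14}^{\alpha}$ lies in $\mathcal R$: the conditions $A_3A_1\subseteq A_3$, $A_3^2=0$ force $A_3$ to be the annihilator $\langle e_3\rangle$, after which the determinantal condition reduces to $(1-\alpha^{2})\,d^{2}=0$ with $d\neq0$ a basis determinant. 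Incidentally, your prediction that the obstruction is the relation $1-\alpha^{2}=0$, hence your restriction to \emph{generic} $\alpha$, is exactly right and is in fact sharper than the paper's blanket claim ``${\rm A}_{24}\not\to{\rm A}_{14}^{\alpha\neq0}$'': the degeneration ${\rm A}_{24}\to{\rm A}_{14}^{1}$ really does hold (take $E_1^t=t^2e_2+te_3$, $E_2^t=te_1+t^4e_2$, $E_3^t=\tfrac12t^2e_3$; then $E_1^tE_2^t\to 2E_3^t$ and all other products tend to $0$), so the non-degeneration is available only for $\alpha^{2}\neq1$ --- which suffices for $\mathcal{C}_4\not\subseteq\mathcal{C}_3$ because $\mathcal{C}_4$ is irreducible. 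To turn your proposal into a proof you must still exhibit such a closed set (or an equivalent invariant separating ${\rm A}_{24}$ from generic ${\rm A}_{14}^{\alpha}$), and also write out the parametrized bases you merely assert for the covering degenerations.
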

\begin{proof}
After carefully  checking  the dimensions of orbit closures of the more important for us algebras, we have

\begin{longtable}{rclrclrclrcl}
$\dim \mathcal{O}({\rm A}_{07})$&$=$&$9,$ &
$\dim \mathcal{O}({\rm A}_{24})$&$=$&$7,$ & 
$\dim \mathcal{O}({\rm A}_{14}^{\alpha})$&$=$&$ 6,$ &
$\dim \mathcal{O}({\rm A}_{18})$&$=$&$3.$
\end{longtable}
\noindent
${\rm A}_{18}$ is rigid due to~\cite{akks}. 
Thanks to~\cite{akks}, ${\rm A}_{07}$ is rigid in the variety of associative commutative algebras and each commutative associative algebra is in the irreducible component defined by ${\rm A}_{07}$.  
Since ${\rm A}_{07}$ is commutative, we have
${\rm A}_{07} \not\to \big\{ {\rm A}_{24},\  {\rm A}_{14}^{\alpha }  \big\}$. The principal non-degeneration
${\rm A}_{24} \not\to {\rm A}_{14}^{\alpha\neq 0}$ is justified by the following condition
\begin{center} 
$\mathcal R=\big\{ 
c_{22}^3c_{11}^3=c_{12}^3c_{21}^3, \ 
A_3^2=0, \ A_3A_1 \subseteq A_3, \ A_2A_1 \subseteq A_2
\big\}.
$\end{center}
All necessary degenerations are given below 

\begin{longtable}{|lcl|lcl|}
\hline
${\rm A}_{13} $&$\xrightarrow{ (te_1, \ e_2+e_3,\  -te_3)} $&${\rm A}_{12}$ & 
    ${\rm A}^{-it^{-1}}_{14} $&$\xrightarrow{ (te_1+te_2-4t^{-2}e_3, \ t^5e_2, \ -i t^2e_1+i t^2e_2)} $&${\rm A}_{13}$\\\hline  
    ${\rm A}_{24} $&$\xrightarrow{ (e_1+e_2,\ t^2e_2+e_3,\ te_1)} $&${\rm A}_{16}$ &
${\rm A}_{24} $&$\xrightarrow{ (e_1, \ e_3,\ te_2)} $&${\rm A}_{21}$ \\
 
\hline
\end{longtable}

\end{proof}

\subsubsection{The geometric classification of   
associative  algebras}

\begin{theorem}\label{thm:geo_assoc}
The variety of complex $3$-dimensional associative algebras has dimension $9$ and it has $8$ irreducible components defined by  
\begin{center}
$\mathcal{C}_1=\overline{\mathcal{O}( {\rm A}_{07})},$ \
$\mathcal{C}_2=\overline{\mathcal{O}( {\rm A}_{17})},$ \
$\mathcal{C}_3=\overline{\mathcal{O}( {\rm A}_{18})},$ \
$\mathcal{C}_4=\overline{\mathcal{O}( {\rm A}_{19})}.$ \\
$\mathcal{C}_5=\overline{\mathcal{O}( {\rm A}_{20})},$ \
$\mathcal{C}_6=\overline{\mathcal{O}( {\rm A}_{23})},$ \
$\mathcal{C}_7=\overline{\mathcal{O}( {\rm A}_{24})},$ \
$\mathcal{C}_8=\overline{\mathcal{O}( {\rm A}_{14}^{\alpha})},$ 
\end{center}
In particular, there are only $7$ rigid algebras in this variety.
\end{theorem}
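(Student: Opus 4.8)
The plan is to follow the recipe set out in the preliminaries to this section. Since Theorem \ref{3-dim assoc} together with Theorem \ref{3-dim commassoc} already supplies the complete list of isomorphism classes of complex $3$-dimensional associative algebras, the only remaining task is to determine the partial order on orbit closures under degeneration and to read off its maximal elements. First I would compute, for every algebra on the list, the dimension of its derivation algebra, and hence its orbit dimension $\dim\mathcal{O}(\mathbf{A})=9-\dim\mathfrak{Der}(\mathbf{A})$. Since $\dim{\rm GL}(\mathbb V)=9$, every orbit has dimension at most $9$; the algebra $\mathrm{A}_{07}=\mathbb{C}\oplus\mathbb{C}\oplus\mathbb{C}$ has only the trivial derivation (its automorphism group is the finite group $S_3$), so $\dim\mathcal{O}(\mathrm{A}_{07})=9$ and the variety has dimension exactly $9$. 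The same computation records the orbit dimensions of the seven candidate rigid algebras $\mathrm{A}_{07},\mathrm{A}_{17},\mathrm{A}_{18},\mathrm{A}_{19},\mathrm{A}_{20},\mathrm{A}_{23},\mathrm{A}_{24}$, and gives $\dim\mathcal{O}(\mathrm{A}_{14}^{\alpha})=6$, so that the closure $\mathcal{C}_8=\overline{\{\mathcal{O}(\mathrm{A}_{14}^{\alpha})\}_{\alpha}}$ of the one-parameter family is an irreducible subset of dimension $7$.

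Second, to show that the eight closures $\mathcal{C}_1,\dots,\mathcal{C}_8$ exhaust the whole variety, I would exhibit for each of the remaining algebras an explicit parametrized basis realizing its degeneration from one of the eight candidates. Because a proper degeneration strictly drops the orbit dimension and the necessary condition $\mathfrak{Der}(\mathbf{A})<\mathfrak{Der}(\mathbf{B})$ restricts the search, this is a finite and essentially mechanical bookkeeping step. The commutative part is immediate from the fact cited in Theorem \ref{thm:geo_perm} that $\mathrm{A}_{07}$ is rigid among commutative associative algebras and that every commutative associative algebra lies in $\overline{\mathcal{O}(\mathrm{A}_{07})}$; the noncommutative survivors $\mathrm{A}_{12},\mathrm{A}_{13},\mathrm{A}_{15},\mathrm{A}_{16},\mathrm{A}_{21},\mathrm{A}_{22}$ are covered by reusing degenerations already established for Theorem \ref{thm:geo_perm}, for instance $\mathrm{A}_{24}\to\mathrm{A}_{16}$, $\mathrm{A}_{24}\to\mathrm{A}_{21}$, and $\mathrm{A}_{14}^{*}\to\mathrm{A}_{13}\to\mathrm{A}_{12}$, supplemented by a few further explicit families from $\mathrm{A}_{17},\mathrm{A}_{20},\mathrm{A}_{23}$.

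Third — and this is where the real difficulty lies — I would prove the non-degenerations guaranteeing that no candidate closure is properly contained in another. The pairs whose source has strictly larger orbit dimension are automatic, since degeneration cannot raise orbit dimension; in particular nothing degenerates into the $9$-dimensional orbit of $\mathrm{A}_{07}$. The delicate cases are the incomparable pairs of equal orbit dimension and, above all, the assertions $\mathbf{A}\not\to\mathrm{A}_{14}^{\alpha}$ for the single-orbit candidates, which are exactly what forces $\mathcal{C}_8$ to be a separate component rather than absorbed into a larger closure. For these I would exhibit $\mathfrak{B}$-stable Zariski-closed conditions on the structure constants, in the spirit of the set $\mathcal{R}=\{\,c_{22}^3c_{11}^3=c_{12}^3c_{21}^3,\ A_3^2=0,\ A_3A_1\subseteq A_3,\ A_2A_1\subseteq A_2\,\}$ from the proof of Theorem \ref{thm:geo_perm}, which hold on the larger orbit but fail on the target, and then invoke Lemma \ref{gmain} to exclude the degeneration. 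The genuine obstacle is not checking such conditions but discovering, for each problematic pair, a separating invariant stable under a Borel subgroup.

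Finally I would assemble the three steps. Each $\mathcal{C}_i$ is irreducible, being the closure of a single orbit ($i\le 7$) or of the irreducible family $\mathrm{A}_{14}^{\alpha}$ ($i=8$); by Step~2 their union is all of the variety; and by Step~3 none is contained in another. Hence $\mathcal{C}_1,\dots,\mathcal{C}_8$ are precisely the irreducible components. Seven of them are closures of single orbits, so their sources $\mathrm{A}_{07},\mathrm{A}_{17},\mathrm{A}_{18},\mathrm{A}_{19},\mathrm{A}_{20},\mathrm{A}_{23},\mathrm{A}_{24}$ are rigid, while $\mathcal{C}_8$ is the closure of a proper one-parameter family and contributes no rigid algebra, yielding the stated total of seven rigid algebras.
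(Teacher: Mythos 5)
Your proposal reproduces the paper's skeleton correctly (orbit dimensions via derivations, exhaustion by explicit degenerations reusing Theorem \ref{thm:geo_perm}, non-degenerations via Borel-stable closed sets and Lemma \ref{gmain}), but it has a genuine gap: the decisive non-degenerations are named as a task, never carried out. You write that you ``would exhibit $\mathfrak{B}$-stable Zariski-closed conditions \dots in the spirit of the set $\mathcal R$'' and concede that discovering these invariants is ``the genuine obstacle'' --- but that obstacle is precisely the content of the theorem. The paper does this work: it exhibits the explicit set $\mathcal R=\{c_{12}^2=c_{21}^2=c_{31}^3,\ c_{21}^3=0,\ c_{22}^3=0,\ A_1A_2+A_2A_1\subseteq A_2,\ A_1A_3+A_3A_1\subseteq A_3,\ A_2A_3=0\}$ proving ${\rm A}_{20}\not\to\{{\rm A}_{14}^{\alpha},{\rm A}_{19}\}$ and a second explicit set proving ${\rm A}_{23}\not\to\{{\rm A}_{14}^{\alpha},{\rm A}_{19}\}$, as well as the concrete degenerations ${\rm A}_{23}\to{\rm A}_{15}$ and ${\rm A}_{23}\to{\rm A}_{22}$ that your Step 2 only gestures at. Without these, the claim that $\mathcal C_4,\mathcal C_5,\mathcal C_6,\mathcal C_8$ are pairwise-incomparable components is unproven.

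Moreover, your enumeration of the ``delicate cases'' is off in both directions, and this hides a second gap. Equal-dimension pairs are not delicate at all --- a proper degeneration strictly drops orbit dimension, so they are automatic. The cases you must handle, besides ${\rm A}\not\to{\rm A}_{14}^{\alpha}$, are the degenerations \emph{into} ${\rm A}_{19}$ from the dimension-$7$ orbits and from the family ${\rm A}_{14}^{*}$ (orbit dimensions $7,6$ versus $5$); your plan never addresses these, so the componenthood of $\mathcal C_4=\overline{\mathcal O({\rm A}_{19})}$ is not established. The paper dispatches most of them with one observation you miss entirely: the $\mathfrak{perm}$ identities are Zariski-closed, so the $\mathfrak{perm}$ algebras ${\rm A}_{07},{\rm A}_{14}^{\alpha},{\rm A}_{18},{\rm A}_{24}$ cannot degenerate to the non-$\mathfrak{perm}$ algebras ${\rm A}_{17},{\rm A}_{19},{\rm A}_{20},{\rm A}_{23}$ (by Corollary \ref{perm}); combined with ${\rm A}_{24}\not\to{\rm A}_{14}^{\alpha}$ already proved inside Theorem \ref{thm:geo_perm} and the rigidity of ${\rm A}_{17},{\rm A}_{18}$ from \cite{akks}, this closes every remaining pair. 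Incorporating that closed-subvariety argument, and writing down the two separating sets $\mathcal R$ for ${\rm A}_{20}$ and ${\rm A}_{23}$, is what your proposal still needs to become a proof.
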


\begin{proof}
Thanks to Theorem \ref{thm:geo_perm}, 
we have the list of algebras that define irreducible components in the variety of $\mathfrak{perm}$ algebras. Hence, below we consider only this list of algebras and non-$\mathfrak{perm}$ associative algebras from Theorem \ref{3-dim assoc}. After carefully  checking  the dimensions of orbit closures of the more important for us algebras, we have 
\begin{longtable}{rcrcrcrcl} 
&&&&$\dim\mathcal{O}({\rm A}_{07})$&$=$&$9,$ \\ 

$\dim\mathcal{O}({\rm A}_{20})$&$=$&$\dim\mathcal{O}({\rm A}_{23})$&$=$&$\dim\mathcal{O}({\rm A}_{24})$&$=$&$7,$ \\ 

&&&&$\dim\mathcal{O}({\rm A}_{14}^{\alpha})
$&$=$&$6,$ \\ 

&&&&$\dim\mathcal{O}({\rm A}_{19})$&$=$&$5,$ \\

&&$\dim\mathcal{O}({\rm A}_{17})$&$=$&$\dim\mathcal{O}({\rm A}_{18})$&$=$&$3$. \\
\end{longtable}
\noindent

${\rm A}_{17}$ and ${\rm A}_{18}$ are rigid due to \cite{akks}. Thanks to~\cite{akks}, ${\rm A}_{07}$ is rigid in the variety of associative commutative algebras and each commutative associative algebra is in the irreducible component defined by ${\rm A}_{07}$. Algebras ${\rm A}_{07},$ ${\rm A}_{14}^{\alpha},$  ${\rm A}_{18}$, and ${\rm A}_{24}$
are $\mathfrak{perm}$. Hence,
\begin{center}
    
$\big\{{\rm A}_{07},\ {\rm A}_{14}^{\alpha},\ {\rm A}_{18},  \
{\rm A}_{24} \big\}$
$\not\to$  
$\big \{
{\rm A}_{17}, \ 
{\rm A}_{19}, \ 
{\rm A}_{20},\ 
{\rm A}_{23}
\big\}.$ 
\end{center}

Below we list all the reasons for necessary non-degenerations. 

\begin{longtable}{lcl|l}
\hline

${\rm A}_{20}$ & $\not\rightarrow$ & 

$\begin{array}{llll}
 {\rm A}_{14}^{\alpha}, \ {\rm A}_{19} \\
\end{array}$ 
& 
$\mathcal R=\left\{\begin{array}{lllllllll}
c_{12}^2=c_{21}^2=c_{31}^3, \
c_{21}^3=0, \ 
c_{22}^3=0, \\
A_1A_2+A_2A_1 \subseteq A_2,\ 
A_1A_3+A_3A_1 \subseteq A_3, \ 
A_2A_3=0\ 
\end{array}\right\}
$\\

\hline


${\rm A}_{23}$ & $\not\rightarrow$ & 

$\begin{array}{llll}
 {\rm A}_{14}^{\alpha},\  {\rm A}_{19} \\
\end{array}$ 
& 
$\mathcal R=\left\{\begin{array}{lllllll}
c_{11}^3c_{22}^3=c_{12}^3c_{21}^3, \
c_{12}^2=c_{21}^2, \ 
c_{11}^1=c_{13}^3, \\

A_1A_2 +A_2A_1 \subseteq A_2, \
A_1A_3 +A_3A_1 \subseteq A_3, \
A_3^2=0

\end{array}\right\}
$\\

\hline

\end{longtable}

All necessary degenerations are  
${\rm A}_{23}  \xrightarrow{ (e_1+e_2, \ e_3, \ te_2)} {\rm A}_{15}$ and 
${\rm A}_{23} \xrightarrow{ (e_1,\ e_3,\ te_2)}  {\rm A}_{22}.$

\end{proof}

\subsubsection{The geometric classification of   
right alternative  algebras}

\begin{theoremG1}\label{thm:geo_rightalt}
The variety of complex $3$-dimensional right alternative algebras has dimension $9$ and it has $9$ irreducible components defined by  
\begin{center}
$\mathcal{C}_1=\overline{\mathcal{O}( {\rm A}_{07})},$ \
$\mathcal{C}_2=\overline{\mathcal{O}( {\rm A}_{17})},$ \
$\mathcal{C}_3=\overline{\mathcal{O}( {\rm A}_{18})},$ \
$\mathcal{C}_4=\overline{\mathcal{O}( {\rm A}_{19})}.$ \\
$\mathcal{C}_5=\overline{\mathcal{O}( {\rm A}_{20})},$ \
$\mathcal{C}_6=\overline{\mathcal{O}( {\rm A}_{23})},$ \
$\mathcal{C}_7=\overline{\mathcal{O}( {\rm A}_{24})},$ \
$\mathcal{C}_8=\overline{\mathcal{O}( {\rm R}_{09})},$ \ and
$\mathcal{C}_9=\overline{\mathcal{O}( {\rm R}_{10})}.$ 
\end{center}
In particular, there are only $9$ rigid algebras in this variety.
\end{theoremG1}

\begin{proof}
Thanks to Theorem \ref{thm:geo_assoc}, we have the list of algebras that define irreducible components in the variety of associative algebras. Hence, below we consider only this list of algebras and non-associative  right alternative algebras from Theorem A1.
After carefully  checking  the dimensions of orbit closures of the more important for us algebras, we have 
\begin{longtable}{rcrcrcl} 
&&&&$\dim\mathcal{O}({\rm A}_{07})$&$=$&$9,$ \\ 

&&$\dim\mathcal{O}({\rm R}_{09})$&$=$&$\dim\mathcal{O}({\rm R}_{10})$&$=$&$8,$ \\ 

$\dim\mathcal{O}({\rm A}_{20})$&$=$&$\dim\mathcal{O}({\rm A}_{23})$&$=$&$\dim\mathcal{O}({\rm A}_{24})
$&$=$&$7,$ \\ 

&&&&$\dim\mathcal{O}({\rm A}_{14}^{\alpha})$&$=$&$6,$ \\ 

&&&&$\dim\mathcal{O}({\rm A}_{19})$&$=$&$5,$ \\

&&$\dim\mathcal{O}({\rm A}_{17})$&$=$&$\dim\mathcal{O}({\rm A}_{18})$&$=$&$3$. \\
\end{longtable}
\noindent

${\rm A}_{17}$ and ${\rm A}_{18}$ are rigid due to \cite{akks}. Thanks to~\cite{akks}, ${\rm A}_{07}$ is rigid in the variety of associative commutative algebras and each commutative associative algebra is in the irreducible component defined by ${\rm A}_{07}$. Moreover, ${\rm A}_{07} \not\to \big\{
{\rm R}_{09},
{\rm R}_{10}
\big \}.$

Below we list all the important reasons for necessary non-degenerations. 

\begin{longtable}{lcl|l}
\hline



${\rm R}_{09}$ & $\not\rightarrow$ & 

$\begin{array}{llll}
 {\rm A}_{19},\ {\rm A}_{20},\ {\rm A}_{23},\ {\rm A}_{24} \\
\end{array}$ 
& 
$\mathcal R=\left\{\begin{array}{lllllllll}
c_{11}^1=c_{21}^2, \ 
c_{11}^2=0, \
c_{21}^1=0, \
c_{33}^3=0, \
A_1A_2 \subseteq A_3 
\end{array}\right\}
$\\
\hline

 ${\rm R}_{10}$ & $\not\rightarrow$ & 

$\begin{array}{llll}
 {\rm A}_{19},\ {\rm A}_{20},\ {\rm A}_{23},\ {\rm A}_{24} \\
\end{array}$ 
& 
$\mathcal R=\left\{\begin{array}{lllllllll}
c_{11}^1=c_{12}^2=c_{13}^3=c_{31}^3, \ 
A_2A_1 \subseteq A_3,\ 
A_3A_2=0


\end{array}\right\}
$\\
\hline

\end{longtable}

All necessary degenerations are given below 

\begin{longtable}{|lclclcl|}
\hline
    ${\rm R}_{10} $&$\xrightarrow{ (e_1+e_2-e_3,\ te_2-2te_3,\ te_3)} $&${\rm R}_{04}$  & \vrule &
${\rm R}_{09} $&$\xrightarrow{ (e_1+e_2+e_3,\ te_2+2te_3,\ te_3)} $&${\rm R}_{13}$\\
\hline
 &\multicolumn{5}{c}{${\rm R}_{09} \xrightarrow{ (te_1+t^{-1}(1+\alpha) e_2+t^{-3}(1+\alpha)^2e_3,\ te_2+2\alpha t^{-1}e_3,\ e_3)} {\rm A}_{14}^{\alpha}$} &\\
\hline
\end{longtable}

\end{proof}

\subsubsection{The geometric classification of   
semi-alternative algebras}

\begin{theoremG2}\label{thm:geo_semialt}
The variety of complex $3$-dimensional semi-alternative algebras has dimension $9$ and it has $7$ irreducible components defined by  
\begin{center}
$\mathcal{C}_1=\overline{\mathcal{O}( {\rm A}_{07})},$ \
$\mathcal{C}_2=\overline{\mathcal{O}( {\rm S}_{03})},$ \
$\mathcal{C}_3=\overline{\mathcal{O}( {\rm S}_{06})},$ \\
$\mathcal{C}_4=\overline{\mathcal{O}( {\rm S}_{09})},$ \
$\mathcal{C}_5=\overline{\mathcal{O}( {\rm S}_{11})},$ \
$\mathcal{C}_6=\overline{\mathcal{O}( {\rm S}_{12})},$ \ and
$\mathcal{C}_7=\overline{\mathcal{O}( {\rm S}_{13})}.$ 
\end{center}
In particular, there are only $7$ rigid algebras in this variety.
\end{theoremG2}

\begin{proof}

After carefully  checking  the dimensions of orbit closures of the more important for us algebras, we have 
\begin{longtable}{rcrcrcl} 
&&&&$\dim\mathcal{O}({\rm A}_{07})$&$=$&$9,$ \\ 

$\dim\mathcal{O}({\rm S}_{06})$&$=$&$\dim\mathcal{O}({\rm S}_{09})$&$=$&$\dim\mathcal{O}({\rm S}_{11})
$&$=$&$8,$ \\ 

&&&&$\dim\mathcal{O}({\rm S}_{03})$&$=$&$7,$ \\

&&$\dim\mathcal{O}({\rm S}_{12})$&$=$&$\dim\mathcal{O}({\rm S}_{13})$&$=$&$5$. \\ 
\end{longtable}

It is easy to see that the algebras ${\rm S}_{06},$ ${\rm S}_{12}$ are opposite to ${\rm S}_{09},$ ${\rm S}_{13}$, respectively. Moreover, the algebra ${\rm S}_{03}$ is self-opposit. Thus, it is obvious that 
$\{{\rm S}_{06},\ {\rm S}_{09} \} \not\rightarrow {\rm S}_{03}$. 
The two useful non-degenerations are given below:

\begin{longtable}{lcl|l}
\hline
    \multicolumn{4}{c}{Non-degenerations reasons} \\
 
\hline


${\rm S}_{11}$ & $\not\rightarrow$ & 

$  {\rm S}_{03} $ 
& 

$\mathcal R=\left\{\begin{array}{lllllllll}
A_3^2=0, \
A_3A_1\subseteq A_3, \
c_{22}^2=-c_{21}^1, \
2c_{22}^2c_{11}^2=(c_{13}^3)^2+(c_{31}^3)^2\\
\mbox{new basis for }{\rm S}_{11}:
 \mathcal{B} =\big\{f_1=e_1, f_2=e_3, f_3=e_2\big\}
\end{array}\right\}
 $\\
 
\hline

${\rm S}_{06}$ & $\not\rightarrow$ & 

$ {\rm S}_{12},\  {\rm S}_{13}  
$ 
& 

$\mathcal R=\left\{\begin{array}{lllllllll}
A_1A_2+A_2A_1 \subseteq A_2, \
c_{12}^2=c_{21}^2,\\
\mbox{new basis for }{\rm S}_{06}:
 \mathcal{B} =\big\{f_1=e_3, f_2=e_1, f_3=e_2\big\}
\end{array}\right\}
 $\\
 
\hline

\end{longtable}
The last non-degeneration 
${\rm S}_{09} \not\rightarrow  \{ {\rm S}_{12},\  {\rm S}_{13}  \}$ is true due to opposite algebras [${\rm S}_{06},$ ${\rm S}_{12}$ are opposite to ${\rm S}_{09},$ ${\rm S}_{13}$, respectively]. 

All necessary degenerations are given below 

\begin{longtable}{|lcl | lcl|}
\hline
${\rm S}_{01}^{\alpha^{-1}} $&$\xrightarrow{ (\alpha te_1 +te_2,\ e_2,\ te_3)} $&${\rm A}_{14}^\alpha$  & 
${\rm S}_{03} $&$\xrightarrow{ ( \frac 12e_1,\ t^{-1}e_2,\ t^{-1}e_3)} $&${\rm A}_{19}$ \\ \hline

${\rm S}_{09} $&$\xrightarrow{ ( \frac 12e_1,\ e_3,\ t^{-1}e_2)} $&${\rm A}_{23}$ & 
${\rm S}_{06} $&$\xrightarrow{ (\frac 1 2e_1,\ e_3,\ t^{-1}e_2)} $&${\rm A}_{24}$  \\ \hline

${\rm S}_{03} $&$\xrightarrow{ (e_1,\ e_2,\ t^{-1}e_3)} $&${\rm S}_{02}$ &
${\rm S}_{03} $&$\xrightarrow{ (e_1, \ t^{-1}e_2+t(t^2-2)^{-1}e_3,\ 2(2-t^2)^{-1}e_3)}$&$ {\rm S}_{04}$\\\hline

${\rm S}_{06} $&$\xrightarrow{ (e_1+2e_3,\ e_2+t^2e_3,\ te_3)} $&${\rm S}_{05}$  &  
${\rm S}_{06} $&$\xrightarrow{ (e_1, \ e_2,\ te_3)} $&${\rm S}_{07}$  \\ \hline

${\rm S}_{09} $&$\xrightarrow{ (e_1+2e_3,\ e_2,\ te_3)} $&${\rm S}_{08}$ &  
${\rm S}_{09} $&$\xrightarrow{ (e_1,\ e_2,\ te_3)} $&${\rm S}_{10}$ \\

\hline

${\rm S}_{12} $&$\xrightarrow{ (\frac 12 e_1, t^{-1}e_2, e_3)} $&${\rm A}_{17}$ &  
${\rm S}_{13} $&$\xrightarrow{ (\frac 12 e_1, t^{-1}e_2, e_3)} $&${\rm A}_{18}$ \\

\hline

\multicolumn{6}{|c|}{${\rm S}_{11} \xrightarrow{ (\frac 1 2e_1+ \frac 18 e_2+ \frac 1 2e_3, \ 
-\frac 12e_1+  \frac 14e_2+ \frac 1 2e_3, \ \frac3 8t^{-1}e_2)} {\rm A}_{20}$}  
\\
\hline
\multicolumn{6}{|c|}{${\rm S}_{11} \xrightarrow{ ((1+\alpha)^{-1}te_1+(1+\alpha)^{-1}\alpha te_3, \ 
(1+\alpha)^{-2}t^2e_2+(1+\alpha)^{-1}(1-\alpha)t^2e_3, \ (1+\alpha)^{-3}t^3e_2)} {\rm S}_{01}^{\alpha\neq -1}$} 
\\
\hline
\end{longtable}

\end{proof}

\printbibliography 
 
\end{document}